\documentclass[A4paper,reqno,11pt]{amsart}
\usepackage{amssymb}
\usepackage{graphicx}
\usepackage{amsmath}
\usepackage{esint}
\usepackage[margin=1.17in]{geometry}

\usepackage[usenames, dvipsnames]{color}
\usepackage{epstopdf}
\usepackage{verbatim}
\usepackage{mathrsfs}
\usepackage{bm}
\usepackage{cite}
\usepackage{color}
\allowdisplaybreaks[4]
\usepackage{amsmath, amssymb, pgfplots}
\pgfplotsset{compat = newest, width = 12cm}
\usepackage{subfigure}
\usepackage{caption}
\usepackage{diagbox}
\usepackage{multirow}

\numberwithin{equation}{section}

\newtheorem{theorem}{Theorem}[section]
\newtheorem{corollary}[theorem]{Corollary}
\newtheorem{lemma}[theorem]{Lemma}
\newtheorem{prop}[theorem]{Proposition}

\theoremstyle{definition}
\newtheorem{remark}[theorem]{Remark}

\theoremstyle{definition}
\newtheorem{definition}[theorem]{Definition}

\theoremstyle{definition}

\makeatletter
\def\dashint{\operatorname%
{\,\,\text{\bf-}\kern-.98em\DOTSI\intop\ilimits@\!\!}}
\makeatother

\def\\det{\text{\det}}

\def\ri{\mathrm{i}}

\def\.5{\frac{1}{2}}

\def\bR{\mathbb{R}}

\def\bN{\mathbb{N}}

\def\Im{\text{Im}\,}

\def\cA{\mathcal{A}}

\def\cR{\mathcal{R}}
\def\S{\mathcal{S}}
\def\cK{\mathcal{K}}

\def\cI{\mathcal{I}}

\def\rd{\mathrm{d}}
\def\pD{\partial D}

\newcommand{\RN}[1]{%
\textup{\uppercase\expandafter{\romannumeral#1}}%
}

\renewcommand{\epsilon}{\varepsilon}

\newcounter{marnote}

\begin{document}

\title[Finite systems of subwavelength resonators]{Resonant frequencies distribution for multiple closely spaced subwavelength resonators}

\author[H.G. Li]{Haigang Li}
\address[H.G. Li]{School of Mathematical Sciences, Beijing Normal University, Laboratory of Mathematics and Complex Systems, Ministry of Education, Beijing 100875, China.}
\email{hgli@bnu.edu.cn}

\author[J.H. Zhang]{Junhua Zhang}
\address[J.H. Zhang]{School of Mathematical Sciences, Beijing Normal University, Laboratory of Mathematics and Complex Systems, Ministry of Education, Beijing 100875, China.}
\email{jhzhang$\_$23@mail.bnu.edu.cn}

\date{\today} 

\begin{abstract}
In this paper, we investigate a resonant system comprising $N$ closely packed spherical resonators ($N>2$). We analyze how the spatial arrangement of these resonators influences the distribution of resonant frequencies, focusing on leading-order terms. Furthermore, we characterize the asymptotic behavior of resonant modes linked to their respective frequencies. Our results demonstrate distinct trends across configurations: For single-row alignment, the system exhibits $N$ clearly separated resonant frequencies; For multi-row arrangements, the resonant frequency range broadens, though the total number of frequencies may diminish; while for ring configurations, comparable frequency ranges to chain arrangements emerge, but with fewer resonant frequencies. We derive explicit analytical expressions to quantify these frequency distributions. Regarding resonant modes, we identify that at specific frequencies, the gradient of these modes may exhibit different asymptotic behavior between different resonators.

\textbf{Keywords:} Subwavelength resonance, Resonators arrangement, Frequencies distribution, Resonant modes, Capacitance matrix
\end{abstract}

\maketitle

\section{Introduction}
In wave physics, there is significant interest in controlling waves at subwavelength scales, as small objects can exhibit subwavelength resonance and strongly scatter waves with comparatively large wavelengths. For instance, subwavelength acoustic resonators are structures capable of compressing and vibrating in response to wavelengths much larger than their own size. The first notable example of subwavelength resonance in acoustics was observed by Marcel Minnaert when studying the resonance of small air bubbles in water \cite{m}, where the high contrast material properties between air and water are identified as a crucial factor in this phenomenon \cite{afglz,mms,dhbl}. Similar phenomena have also been observed in various settings, including  Helmholtz resonators \cite{az}, plasmonic nanoparticles \cite{amrz,aryz}, and high-contrast dielectric particles \cite{adfms,alz}. In this paper, following the notion in \cite{adh}, we define subwavelength resonators as material inclusions that are bounded and have significant parameter differences from their surrounding medium, and exhibit subwavelength resonance.

The structures composed of subwavelength resonators, a type of metamaterial, have been widely used in waveguiding applications \cite{ad1,ad2,adhy,afglz2,aflyz,kdd,lbfwtd} due to their interaction with waves at subwavelength scales. The design of acoustic systems with specific resonance frequencies \cite{ad1,ad2} and particular spectral gaps \cite{adhy} is a crucial focus in acoustic material research. The subwavelength resonance has multiscale nature. Previous studies have demonstrated that the number, shape, arrangement, and spacing of resonators all influence the behavior of the resonance system \cite{ad1,ady,lz,fa}. Feppon, Cheng, and Ammari \cite{fca} proposed a mathematical theory in acoustic wave scattering in finite one-dimensional high-contrast media, where the system consists of a finite alternation of high-contrast segments with arbitrary lengths and interdistances. They showed the existence of subwavelength resonances, which are analogous to the well-known Minnaert resonances in three-dimensional systems. This work was later extended by Ammari, Barandun, Cao, and Feppon \cite{abcf} to infinite periodic chains of subwavelength resonators.

For two close-to-touching resonators, Ammari, Davies, and Yu \cite{ady} employed bispherical coordinates to analyze the behavior of the coupled subwavelength resonant modes of two spherical high-contrast acoustic resonators. They demonstrated that the pair of resonators exhibits two distinct subwavelength resonant modes, with frequencies having different leading-order asymptotic behavior. Li and Zhao \cite{lz} extended this work by studying resonant modes of arbitrarily shaped convex resonators, revealing how geometric properties, such as convexity, curvatures, and the small inter-distance between resonators govern the resonance behavior of the system. Ammari, Davies, and Hiltune \cite{adh} explored how the generalized capacitance matrix provides a rigorous and intuitive discrete approximation to subwavelength Helmholtz scattering and resonance problems. This framework provides leading-order asymptotic expressions for resonant modes and scattered solutions via the matrix’s eigenvalues and eigenvectors, accompanied by precise error bounds. Notably, as mentioned in \cite{adh}, concise representations for capacitance coefficients become intractable for larger resonator systems. However, in a dilute case where the resonators are small compared to the distance between them, Ammari et al. \cite{adhly} obtained explicit expressions for the asymptotic behaviour of the capacitance matrix. 

As a continuation of \cite{lz}, this paper employs the capacitance approximation method proposed in \cite{adh} to analyze resonant systems composed of multiple closely spaced spherical resonators in chain, matrix, and ring configurations. We investigate the impact of factors such as the number of resonators $N$ ($N>2$), their spatial arrangement, volume, and small inter-resonator distance on subwavelength resonance phenomena. Our analysis reveals how collective resonators fundamentally reshape spectral responses through geometry-dependent wave interactions. In linear chains, positional asymmetries progressively lift degeneracies to generate precisely $N$ discrete, well-separated resonances—a phenomenon that enables multifrequency filtering. Transitioning to matrix configurations introduces coupling along orthogonal dimensions; this cross-dimensional interaction collapses intermediate modes while broadening spectral coverage. Rings impose rotational constraints that enforce modal degeneracy within chain-like frequency bounds, yielding compact resonators with simplified spectral signatures. This demonstrates that spatial organization is a primary design parameter—spectral engineering is achieved through geometric arrangement without altering the intrinsic properties of the resonators, enabling dynamic reconfiguration of functionality.

The second contribution of this work resides in characterizing the asymptotic behavior of resonant modes. We demonstrate that at designated resonant frequencies, spatially selective suppression of gradient singularities occurs between specific resonator pairs, manifesting as reduced blow-up rates relative to other inter-resonator regions. This phenomenon establishes engineered energy concentration channels where field enhancement preferentially localizes along geometrically determined pathways. This represents a novel paradigm for spatially programmable energy trapping in metamaterials. It has profound implications for the design of multifrequency resonators with isolated energy storage channels, tunable dissipation pathways, and the ability to geometrically manipulate energy flux. Technically, this discovery emerges from our isospectral reduction framework, which transforms intractable capacitance matrix eigenvalue problems into analytically solvable structured operators (Toeplitz/circulant matrices) while preserving full spectral information, providing a straightforward and dependable approach for such research. 

Furthermore, the understanding of the gradient blow-up of resonant modes is crucial because the gradient of acoustic pressure elucidates the forces that resonators exert on each other in the presence of sound waves. These forces, referred to as secondary Bjerknes forces \cite{b,c,p2}, are explored in this work, especially in the case of closely interacting bubbles, offering new insights on their behavior. We would like to point out that with slight modifications, the conclusions of this paper can be applied to convex resonators of arbitrary shapes, as well as to other configurations.

The blow-up phenomenon between closely spaced inclusions has been extensively studied in various disciplines, such as electrostatics \cite{aklll,ackly,kl,ly,p,bly,dly,lyz,wen}, linear elasticity \cite{ky,bll,bll2,l1}, and Stokes flows \cite{akky,dll,lx,lxz,lxz2}. It has been proved that as the material parameters of inclusions degenerate to either infinity or zero, there is a significant amplification of the electric field or stress field when the distance between them tends to zero. Therefore, it is important to distinguish these field enhancement phenomena from resonance effects. It is worth noting that in this paper and in \cite{ady,lz}, the material parameters considered are positive, contrasting with the negative permittivities of electromagnetic inclusions for surface plasmons studied in \cite{bt,kbs,ya}.

\subsection{Formulation of the problem} 

Let $N>2$, and $D_1,\dots,D_N$ be $N$ closely positioned convex bounded open sets in $\mathbb{R}^{3}$, representing material inclusions. Denote 
$D:=\cup_{l=1}^N D_l$ and $\Omega:=\bR^3\backslash \overline{D}.$
We assume a homogeneous background medium in $\Omega$ with density $\rho $ and bulk modulus $\kappa$. The density and bulk modulus in the resonators $D_{l}$ are denoted as $\rho_b$ and $\kappa_b$, respectively. We use $\omega$ to represent the frequency of the waves and introduce the auxiliary parameters:
$$v=\sqrt{\frac{\kappa}{\rho }},\quad v_b=\sqrt{\frac{\kappa_b}{\rho _b}},\quad k=\frac{\omega}{v},\quad k_b=\frac{\omega}{v_b},$$
then $v$ and $v_b$ represent the wave speeds in the background medium and the inclusions, respectively, while $k$ and $k_b$ are the corresponding wavenumbers. Furthermore, we introduce a dimensionless contrast parameter
$\delta=\frac{\rho _b}{\rho }$. 

We consider the following Helmholtz resonance problem
\begin{equation}\label{eq_helm}
\left\{\begin{aligned}
\Delta u\!+\! k^2u&=0&&\text{in}\ \Omega,\\
\Delta u\!+\! k_{b}^2u&=0&&\text{in}\ D,\\
u|_+\!-\! u|_-&=0&&\text{on}\ \partial D,\\
\delta\frac{\partial u}{\partial\nu}\Big|_+\!-\!\frac{\partial u}{\partial\nu}\Big|_-&=0&&\text{on}\ \partial D,
\end{aligned}
\right.
\end{equation}
along with the Sommerfeld outgoing radiation condition
$\lim\limits_{|x|\to\infty}|x|^{2}\Big(\frac{\partial}{\partial |x|}-\ri k\Big)(u-u^{in})=0$, to guarantees that energy is radiated outwards by the scattered solution. In this paper, the subscripts $+$ and $-$ indicate taking the limit from outside and inside the boundary $\partial D$, respectively. These Helmholtz equations, which are used to model acoustic and polarised electromagnetic waves, represent the simplest model for wave propagation that still exhibits the rich phenomena associated to subwavelength physics.

To achieve subwavelength resonance, we assume that the contrast parameter is small, with $0<\delta<< 1$, indicating a large contrast between the materials, while the wave speeds $v,v_b$ are of the order of 1. A classic example of material inclusions that satisfies these assumptions is a collection of air bubbles in water, known as Minnaert bubbles \cite{m}. In such cases, the contrast parameter $\delta$ is typically around $10^{-3}$.

\subsection{Main results}

We define a complex number $\omega(\delta)$ as a resonant frequency if the Helmholtz resonance system \eqref{eq_helm} admits a non--trivial solution $u$ with $u^{in}=0$. Such a non--trivial solution $u$ is called a resonant mode. Furthermore, a resonant frequency $\omega(\delta)$ is considered subwavelength if it satisfies $\omega(0)=0$ and depends continuously on the parameter $\delta$. The main purpose of this paper is to develop an analytical framework for characterizing subwavelength resonant frequencies and resonant modes in a finite system of subwavelength resonators in $\mathbb{R}^3$.

Consider $N$ identical spherical resonators $D_{1},D_{2},\dots,D_{N}$, closely spaced in $\mathbb{R}^3$, where each resonator $D_l=B_R(x_l)$ is defined as a ball of radius $R$ centered at $x_l=(x_l^1,x_l^2,x_l^3)$. The separation distance $\epsilon$ between adjacent resonators is chosen to depend on $\delta$, and we perform an asymptotic analysis with respect to $\delta$. Specifically, 
$\epsilon$ is set such that, for some $0<\beta<1$, 
\begin{equation}\label{def_epsilon}
\epsilon= e^{-\Lambda/\delta^{1-\beta}},
\end{equation}
where $\Lambda>0$ is a fixed constant. As shown later, this choice of $\epsilon$ ensures that the subwavelength resonant frequencies remain well-behaved (i.e. $\omega=\omega(\delta)\to 0$ as $\delta\to 0$) and enables the computation of asymptotic expansions in terms of  $\delta$. For $N>2$, new phenomena arise for different arrangements, distinct from the case $N=2$ studied in \cite{ly,ady}. Our results will be presented separately for three arrangements: chain, ring, and matrix, in the following order.

The primary tool used to determine the resonant properties of the system $D = D_{1} \cup\cdots\cup D_{N}$ is the Helmholtz single layer potential. By applying Lemma \ref{le_omega} (\!\!\cite{ady}) and \cite[Lemma 2.11]{adh}, we reduce the resonance problem \eqref{eq_helm} to a matrix eigenvalue problem for the generalized capacitance matrix $\tilde{\mathbb{C}}=(\tilde{C}_{ij})_{N\times N}$ as
$\tilde{C}_{ij}=\frac{\delta v_b^2}{|D_i|}C_{ij}$, and  $C_{ij}=\int_{\Omega}\nabla v_{i}\cdot\nabla v_{j}dx$, where $v_i$ is the unique solution to
\begin{equation}\label{eq_vi}
\begin{cases}
\Delta v_i=0 & \text{in} \ \Omega,\\
v_i=\delta_{ij} & \text{on} \ \partial D_i,\\
v_i=O(|x|^{-1}) & \text{as} \ |x|\to\infty,
\end{cases}1\leq i,j\leq N,
\end{equation}
where $\delta_{ij}$ is the Kronecker delta. We denote the average capacity of $N$ resonators
\begin{equation}\label{def_M}
M:=\frac{\mathrm{Cap}(D)}{N}=-\frac{1}{N}\sum_{i=1}^{N}\int_{B_{\tilde{R}}}\frac{\partial v_i}{\partial\nu}\Big|_+\rd\sigma, 
\end{equation}
where $\tilde{R}$ is a sufficiently large constant such that $\bar{D}\subset B_{\tilde{R}}\!:=\!B_{\tilde{R}}(0)$. Throughout the paper, we use the notation
$O(A)$ to denote a quantity that can be bounded by $CA$, where $C$ is some positive constant independent of $\epsilon$ and $\delta$. 

\subsubsection{A finite chain arrangement} 
We begin by considering a chain of spherical resonators with a radius of $R$ and centered at $x_l=((l-1)(2R+\epsilon),0,0)$ for $l=1,\dots,N$, as illustrated in Figure \ref{ca}. 
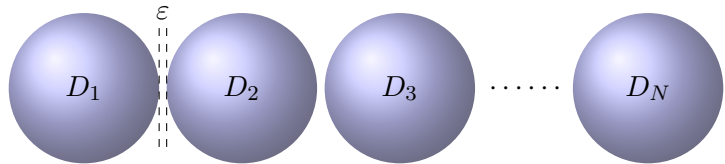
\begin{figure}[h]
\begin{tikzpicture}
\def\myds{1}
\shade[shading=ball, ball color=blue!30!white, opacity=0.8]  (0,0) circle[radius=\myds];
\shade[shading=ball, ball color=blue!30!white, opacity=0.8]  (2.1,0) circle[radius=\myds];
\shade[shading=ball, ball color=blue!30!white, opacity=0.8]  (4.2,0) circle[radius=\myds];
\node at (5.9,0) {$\cdots\cdots$};
\shade[shading=ball, ball color=blue!30!white, opacity=0.8] (7.5,0) circle[radius=\myds];
\node at (0,0) {$D_1$};
\node at (2.1,0) {$D_2$};
\node at (4.2,0) {$D_3$};
\node at (7.5,0) {$D_N$};
\node at (1.05,1) {$\varepsilon$};
\draw [dashed] (1,0.8) -- (1,-0.8);
\draw [dashed] (1.1,0.8) -- (1.1,-0.8);
\end{tikzpicture}
\caption{A chain arrangement.}\label{ca}
\end{figure}
For a chain configuration of $N$ resonators, we have the following theorem on the distribution of resonant frequencies.

\begin{theorem}\label{Th_chain}
Let $D=\cup_{l=1}^N D_l$ be a chain of spherical resonators as above. Then, as the contrast of the density $\delta\to0$, there are $N$ subwavelength resonant frequencies which satisfy the asymptotic formula 
$$\omega_1=\sqrt{\frac{3v_b^2M\delta}{4\pi R^3}}\Big(1+o(1)\Big),~\mbox{and}~ 
\omega_i=\sqrt{\frac{3v_b^2a_N^i\delta}{4R^3}|\log\epsilon|}+O\Big(\sqrt{\frac{\delta}{|\log\epsilon|}}+\delta\Big),\quad i=2,\dots,N,$$
where \begin{equation}\label{def_anin}
a_N^i=2(1-\cos \theta_i), \quad\mbox{and}\quad\theta_i=\frac{(i-1)\pi}{N}.
\end{equation}
\end{theorem}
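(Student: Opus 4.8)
The plan is to reduce the resonance problem, via the generalized capacitance matrix $\tilde{\mathbb{C}}$, to computing the eigenvalues of the matrix $\mathbf{C}=(C_{ij})$ for the chain geometry, and then to exploit the (approximately) Toeplitz structure of $\mathbf{C}$ that arises when the resonators sit on a line with equal spacing. First I would recall the known reduction (Lemma \ref{le_omega} together with \cite[Lemma 2.11]{adh}): the subwavelength resonant frequencies satisfy $\omega_i^2 = \tilde\lambda_i(1+o(1))$, where $\tilde\lambda_i$ are the eigenvalues of $\tilde{\mathbb{C}} = \frac{\delta v_b^2}{|D_1|}\mathbf{C}$, so that $\omega_i = \sqrt{\frac{3\delta v_b^2}{4\pi R^3}\lambda_i}\,(1+o(1))$ with $\lambda_i$ the eigenvalues of $\mathbf{C}$. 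The core task therefore becomes the asymptotic diagonalization of $\mathbf{C}$ as $\epsilon\to 0$.

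The key step is to obtain the entries of $\mathbf{C}$ to leading order. For close-to-touching balls one has the classical estimate that the diagonal-adjacent interaction is governed by $|\log\epsilon|$: I would show (using the single-layer/capacitance analysis for two nearly-touching spheres, as in \cite{ady,lz}, and the fact that non-adjacent balls interact only at $O(1)$ since their separation stays bounded below) that
\begin{equation*}
C_{ii} = \frac{R}{2}|\log\epsilon| + O(1)\ \ (2\le i\le N-1),\quad C_{11}=C_{NN}=\frac{R}{4}|\log\epsilon|+O(1),\quad C_{i,i\pm1} = -\frac{R}{4}|\log\epsilon| + O(1),
\end{equation*}
and $C_{ij}=O(1)$ for $|i-j|\ge 2$. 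Thus $\mathbf{C} = \frac{R}{4}|\log\epsilon|\, \mathbf{L} + O(1)$, where $\mathbf{L}$ is the tridiagonal matrix with $2$ on the interior diagonal, $1$ on the two corner diagonal entries, and $-1$ on the off-diagonals --- this is precisely the graph Laplacian of a path with Neumann-type endpoints (equivalently, $\mathbf{L}=\mathbf{A}^{\!\top}\mathbf{A}$ for the incidence-type matrix of the path). Its eigenvalues are the well-known $2(1-\cos\theta_i)$, $\theta_i = (i-1)\pi/N$, $i=1,\dots,N$, with $\theta_1=0$ giving the kernel. I would verify this either by direct substitution of the cosine eigenvectors or by citing the standard spectrum of the path Laplacian.

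From here the two regimes split according to whether an eigenvalue of $\mathbf{L}$ vanishes. For $i\ge 2$, $a_N^i = 2(1-\cos\theta_i)>0$, so the eigenvalue of $\mathbf{C}$ is $\lambda_i = \frac{R}{4}|\log\epsilon|\,a_N^i + O(1)$, giving $\omega_i^2 = \frac{3\delta v_b^2}{4\pi R^3}\cdot\frac{\pi R}{4}a_N^i|\log\epsilon| + O(\delta) = \frac{3\delta v_b^2 a_N^i}{4 R^3}|\log\epsilon| + O(\delta)$; a standard eigenvalue-perturbation bound (the $O(1)$ correction to $\mathbf{C}$ against the $|\log\epsilon|$-sized leading term) then produces the error term $O\!\big(\sqrt{\delta/|\log\epsilon|}+\delta\big)$ after taking the square root. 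For $i=1$, the leading matrix has a zero eigenvalue with eigenvector $\mathbf{1}$, so $\lambda_1$ is determined by the $O(1)$ part of $\mathbf{C}$ projected onto $\mathbf{1}$; by \eqref{def_M} this projection is exactly $\mathrm{Cap}(D) = NM$ up to lower order, so $\lambda_1 = M(1+o(1))$ and $\omega_1 = \sqrt{\frac{3\delta v_b^2 M}{4\pi R^3}}(1+o(1))$, using that $|D_1|=\frac{4}{3}\pi R^3$.

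The main obstacle I anticipate is making the entrywise expansion of $\mathbf{C}$ rigorous and uniform --- in particular justifying that the off-diagonal, non-adjacent entries and the $O(1)$ remainders are genuinely bounded independently of $\epsilon$ and $\delta$, and that the adjacent-pair coefficient is exactly $-\frac{R}{4}|\log\epsilon|$ rather than merely $O(|\log\epsilon|)$; this requires the fine close-to-touching estimates (the harmonic functions $v_i$ concentrate their Dirichlet energy in the thin necks between adjacent balls, and the neck contribution must be computed to leading order, e.g. via the bispherical-coordinate or Keller-type thin-gap analysis used in \cite{ady,lz}). A secondary technical point is controlling the perturbation of the isolated zero eigenvalue: one must check that the $O(1)$ perturbation does not move it faster than its own size, which follows because $\mathbf{1}$ stays in the kernel of the leading term exactly (each row of $\mathbf{L}$ sums to zero), so first-order perturbation theory applies cleanly and the $o(1)$ in $\omega_1$ is legitimate. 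Once these estimates are in place, the three stated asymptotics follow by assembling the pieces as above.
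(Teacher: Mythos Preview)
Your overall strategy matches the paper's: reduce to the eigenvalue problem for $\mathbb{C}$ via Lemma~\ref{le_omega}, write $\mathbb{C}=\rho(\epsilon)\mathbb{A}+O(1)$ for the path-Laplacian matrix $\mathbb{A}$ (your $\mathbf L$), use that the eigenvalues of $\mathbb{A}$ are exactly the $a_N^i=2(1-\cos\theta_i)$, and then handle the zero eigenvalue separately through the identity $\frac{1}{N}\mathbf 1^{\!\top}\mathbb{C}\,\mathbf 1=\frac{1}{N}\sum_{i,j}C_{ij}=M+o(1)$. The paper carries out the eigenvalue localization by expanding the characteristic polynomial $F(a)=\det(\mathbb{C}-a\rho(\epsilon)\mathbb{I})$ and applying the intermediate value theorem near each root of $f_N(a)=\det(\mathbb{A}-a\mathbb{I})$, whereas you invoke Weyl-type perturbation bounds and first-order perturbation theory for the simple zero eigenvalue; both routes work here since the $a_N^i$ are simple and the gap in $\rho(\epsilon)\mathbb{A}$ is of order $\rho(\epsilon)$ against an $O(1)$ perturbation.

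There is, however, a concrete error in your constants. The paper's Proposition~\ref{prop_C_train} (proved from the neck-energy estimates of \cite{lz}) gives the adjacent-pair coefficient as $-\pi|\log\epsilon|+O(1)$, not $-\tfrac{R}{4}|\log\epsilon|$; correspondingly $\rho(\epsilon)=\pi|\log\epsilon|$. Your displayed arithmetic is internally inconsistent (you write $\tfrac{R}{4}$ for the entries, then silently use $\tfrac{\pi R}{4}$, and the product $\tfrac{3\delta v_b^2}{4\pi R^3}\cdot\tfrac{\pi R}{4}$ does not equal $\tfrac{3\delta v_b^2}{4R^3}$). With the correct coefficient $\pi$ one gets $\lambda_i=\pi a_N^i|\log\epsilon|+O(1)$ and hence $\tilde\lambda_i=\tfrac{3\delta v_b^2}{4\pi R^3}\cdot \pi a_N^i|\log\epsilon|+O(\delta)=\tfrac{3\delta v_b^2 a_N^i}{4R^3}|\log\epsilon|+O(\delta)$, which after taking square roots yields exactly the claimed $\omega_i$ with the stated error $O(\sqrt{\delta/|\log\epsilon|}+\delta)$. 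So the plan is sound, but the specific leading constant you asserted for $C_{i,i\pm1}$ is wrong and must be replaced by $\pi$ (this is precisely the ``fine close-to-touching estimate'' you flagged as the main obstacle).
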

\begin{remark}
By making slight modifications, our approach remains applicable to resonant systems with non-uniformly spaced configurations. For example, when $N=3$, let $\epsilon_1$ denote the distance between $D_1$ and $D_2$, and $\epsilon_2$ denote the distance between $D_2$ and $D_3$. If there is a positive constant $C$ such that $\frac{1}{C}\le\frac{\log \epsilon_1}{\log \epsilon_2}\le C$, then the following statement is true:
$$\omega_1\!=\!\sqrt{\frac{3v_b^2M\delta}{4\pi R^3}}\Big(1+o(1)\Big),~\mbox{and}~ 
\omega_i\!=\!\sqrt{\frac{3v_b^2\delta}{4R^3}a_i(\epsilon_1,\epsilon_2)}+O\Big(\sqrt{\frac{\delta}{|\log\epsilon_1|}}+\sqrt{\frac{\delta}{|\log\epsilon_2|}}+\delta\Big),~ i=2,3,$$
where $a_i(\epsilon_1,\epsilon_2)=|\log\epsilon_1|+|\log\epsilon_2|+(-1)^{i-1}\sqrt{|\log\epsilon_1|^2+|\log\epsilon_2|^2-|\log\epsilon_1||\log\epsilon_2|}$.
\end{remark}

\begin{remark}
When $N=2$, the conclusions obtained in Theorem \ref{Th_chain} are consistent with the results in \cite{ady,lz} by choosing $\epsilon$ as in \eqref{def_epsilon}, yielding 
$$\omega_1=\sqrt{\frac{3v_b^2M\delta}{4\pi R^3}}\big(1+o(1)\big),\quad\mbox{and}~\omega_2=\sqrt{\frac{3\Lambda v_b^2}{2R^3}}\delta^{\beta/2}+O(\delta^{1-\beta/2}).$$ 
Here we would like to point out that when $N=2$, the resonant frequencies are the eigenvalues of a second-order capacitance matrix. These frequencies can be explicitly calculated using the quadratic formula, as they arise from the roots of a quadratic polynomial. For $N>2$, however, the characteristic polynomial in the capacitance matrix becomes degree $N$. By the Abel-Ruffini theorem, no general algebraic solution exists for polynomials of degree $N\ge 5$. Consequently, while the explicit form of the characteristic polynomial remains intractable, we instead analyze its dominant terms. Using the fundamental theorem of algebra and the theorem of existence of zeros, we establish the existence and distribution of its roots, thereby inferring the distribution of resonant frequencies. A critical aspect of this approach is its reliance on the assumption of small interparticle distances (governed by $\epsilon$).
\end{remark}

Furthermore, for its resonant modes, we have
\begin{theorem}\label{Th_mode}
Under the assumptions of Theorem \ref{Th_chain}, let $u^i$, $1\le i\le N$, denote the subwavelength resonant modes associated with the resonant frequencies $\omega_i$ for $D$. Normalize $u^i$ such that
$\lim\limits_{\delta\to0}|u^i|\sim1~ \text{for}\ x\in\partial D.$ 
Then, for the values of the resonant modes $u^i$ on $\pD_l$, $l=1,\dots,N$, we have, as $\delta\to 0$:
\begin{align*}
u^1(x)&=1+O(\omega_1+|\log\epsilon|^{-1}),\\
u^i(x)&=\frac{\sin l\theta_i-\sin(l-1)\theta_{i}}{\sin\theta_i}+O(\omega_i+|\log\epsilon|^{-1}),\quad2\le i\le N.
\end{align*}
\end{theorem}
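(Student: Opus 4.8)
\emph{Proof strategy.} The plan is to reduce the statement to a perturbative computation of the eigenvectors of the generalized capacitance matrix $\tilde{\mathbb{C}}$, the key point being that, for a close-to-touching chain, the dominant part of the capacitance matrix $\mathbb{C}=(C_{ij})_{N\times N}$ is a scalar multiple of the Laplacian matrix $L$ of the path graph on $N$ vertices (i.e.\ $L_{11}=L_{NN}=1$, $L_{ii}=2$ for $1<i<N$, $L_{i,i\pm1}=-1$, and $L_{ij}=0$ otherwise), whose spectrum is explicit, simple, and given exactly by \eqref{def_anin}. First, by the reduction already used for Theorem \ref{Th_chain} (Lemma \ref{le_omega} together with \cite[Lemma 2.11]{adh}), every subwavelength resonant mode has the leading-order representation $u^i=\sum_{j=1}^{N}(\mathbf{x}^{(i)})_j\,v_j+R_i$, where $v_j$ solves \eqref{eq_vi}, $\mathbf{x}^{(i)}$ is the unit eigenvector of $\tilde{\mathbb{C}}$ associated with $\omega_i^2$, and $\|R_i\|_{L^\infty(\partial D)}=O(\omega_i)$ (the rate following from the quantitative error bounds in \cite[Lemma 2.11]{adh} and the size of $\omega_i$ given by Theorem \ref{Th_chain}). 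Since $v_j|_{\partial D_l}=\delta_{jl}$ by \eqref{eq_vi}, this yields $u^i|_{\partial D_l}=(\mathbf{x}^{(i)})_l+O(\omega_i)$, so it remains only to identify $(\mathbf{x}^{(i)})_l$ up to $O(|\log\epsilon|^{-1})$, after fixing the normalization $(\mathbf{x}^{(i)})_1=1$, compatible with the ``$|u^i|\sim1$ on $\partial D$'' normalization of the theorem.

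\emph{Capacitance matrix.} Next I would establish that $\mathbb{C}=c_0|\log\epsilon|\,L+E$ with $\|E\|=O(1)$ and $c_0>0$ explicit. This rests on the close-to-touching asymptotics underlying Theorem \ref{Th_chain} (bispherical coordinates, or the two-sphere capacitance estimates of \cite{ady,lz}): only adjacent resonators form a narrow neck, so $C_{i,i\pm1}=-c_0|\log\epsilon|+O(1)$ and $C_{ii}=c_0\,d_i|\log\epsilon|+O(1)$ with $d_1=d_N=1$ and $d_i=2$ otherwise, while $C_{ij}=O(1)$ whenever $|i-j|\ge2$; since the resonators are identical, $\tilde{C}_{ij}=\frac{\delta v_b^2}{|D_i|}C_{ij}$, so $\tilde{\mathbb{C}}$ has the same eigenvectors as $\mathbb{C}$. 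This is consistent with Theorem \ref{Th_chain}: $L$ has eigenvalues $a_N^i=2(1-\cos\theta_i)$, so $\tilde{\mathbb{C}}$ has eigenvalues $\frac{3\delta v_b^2}{4\pi R^3}\big(c_0 a_N^i|\log\epsilon|+O(1)\big)$ for $i\ge2$, plus one eigenvalue of size $O(\delta)$ governed by $M$, i.e.\ the $\omega_1^2$ branch.

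\emph{Eigenvector perturbation and identification.} The matrix $L$ is a Jacobi matrix with nonzero off-diagonal entries, hence has simple spectrum; its eigenpairs are $a_N^i$ with eigenvector $(\mathbf{w}^{(i)})_l=\cos\big((l-\tfrac12)\theta_i\big)$, and $\mathbf{w}^{(1)}=(1,\dots,1)$ spans $\ker L$. Because consecutive eigenvalues of $\mathbb{C}$ are separated by gaps of order $|\log\epsilon|\gg\|E\|$, the Davis--Kahan $\sin\Theta$ theorem (or an elementary resolvent estimate for a simple, well-isolated eigenvalue) gives $\mathbf{x}^{(i)}=\mathbf{w}^{(i)}/\|\mathbf{w}^{(i)}\|+O(|\log\epsilon|^{-1})$ for $i\ge2$, and $\mathbf{x}^{(1)}=(1,\dots,1)/\sqrt{N}+O(|\log\epsilon|^{-1})$ for the isolated $O(1)$-eigenvalue. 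Renormalizing so that the first entry equals $1$, and using $\sin l\theta_i-\sin(l-1)\theta_i=2\sin(\theta_i/2)\cos\big((l-\tfrac12)\theta_i\big)$ together with $\sin\theta_i=2\sin(\theta_i/2)\cos(\theta_i/2)$, identifies $(\mathbf{w}^{(i)})_l/(\mathbf{w}^{(i)})_1$ with $\frac{\sin l\theta_i-\sin(l-1)\theta_i}{\sin\theta_i}$; combined with $u^i|_{\partial D_l}=(\mathbf{x}^{(i)})_l+O(\omega_i)$, this produces both stated formulas with total error $O(\omega_i+|\log\epsilon|^{-1})$.

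\emph{Main obstacle.} The real work lies in two places. First, making $\mathbb{C}=c_0|\log\epsilon|\,L+O(1)$ rigorous: one needs sharp close-to-touching estimates for all capacitance coefficients $C_{ij}$, showing in particular that the non-adjacent entries, and the $O(1)$ remainders in the adjacent ones, are genuinely subdominant to the $|\log\epsilon|$ leading terms --- this is where most of the analysis sits, and it dovetails with the proof of Theorem \ref{Th_chain}. Second, $\mathbb{C}$ is ill-conditioned (condition number $\sim|\log\epsilon|$), so one must verify that neither the $O(\omega_i)$ error in the mode approximation of \cite[Lemma 2.11]{adh} nor the $O(|\log\epsilon|^{-1})$ eigenvector perturbation gets amplified upon returning to $u^i$ on $\partial D$, i.e.\ that all constants are uniform in $\delta$ (with $N$ fixed). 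A minor point is to confirm that the ``first entry $=1$'' normalization is always legitimate, which holds because $(\mathbf{w}^{(i)})_1=\cos(\theta_i/2)>0$ for $\theta_i=(i-1)\pi/N<\pi$.
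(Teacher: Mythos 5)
Your proposal is correct and follows essentially the same route as the paper: reduce to the eigenvectors of $\tilde{\mathbb{C}}$ via $u^i|_{\partial D_l}=\tilde{\alpha}_l^i+O(\omega_i)$, use Proposition \ref{prop_C_train} to write $\mathbb{C}=\pi|\log\epsilon|\,\mathbb{A}+O(1)$ with $\mathbb{A}$ the path-graph Laplacian of \eqref{def_A}, and then perturb the explicit simple eigenpairs of $\mathbb{A}$ by $O(|\log\epsilon|^{-1})$. Your Davis--Kahan phrasing is just a more explicit version of the paper's argument that $\mathbb{A}_i\tilde{\boldsymbol{\alpha}}^i=O(\rho(\epsilon)^{-1})$ forces $\tilde{\boldsymbol{\alpha}}^i=\boldsymbol{\alpha}^i+O(\rho(\epsilon)^{-1})$, and your trigonometric identity correctly matches $\cos((l-\tfrac12)\theta_i)/\cos(\theta_i/2)$ with the stated formula \eqref{def_alpji}.
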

\begin{remark}\label{Re} Denote the narrow regions between $D_l$ and $D_{l+1}$ for $1\le l\le N-1$ in the chain arrangement by 
\begin{equation}\label{def_omgl}
\Omega_{l,l+1}^{r}:=B_r(\frac{x_l+x_{l+1}}{2})\cap\Omega,\quad\mbox{for some}~0<r<R/4.
\end{equation}
From Theorem \ref{Th_mode}, it follows that
$$\max\limits_{x\in\Omega}|\nabla u^1|,\ \max\limits_{x\in\Omega_{l,l+1}^{r}}|\nabla u^i|\lesssim\frac{1}{\epsilon|\log\epsilon|},\quad\mbox{for}~2\le i\le N,~1\le l\le N-1,$$
where $i$ and $l$ satisfy $l(i-1)=Nt$ for some $t\in\bN_+$ (positive integers), and 
$\max\limits_{x\in\Omega}|\nabla u^i|\sim1/\epsilon$, for $2\le  i\le N$.
\end{remark}

\subsubsection{A finite ring arrangement} 
Now let us discuss the second arrangement: the ring arrangement. Suppose that $\cR$ is a regular polygon with $N$ sides in the $x^1x^2$-plane, with each side length of $2R+\epsilon$. We position the centers of $N$ spherical resonators of radius $R$ at the vertices of $\cR$ and label them in a counterclockwise manner. For instance, the ring arrangements for $N=3$ and $N=6$ can be seen in Figure \ref{ra}. For the three-spheres examples, Ammari and Yu \cite{ya} studied the high field concentration in the narrow gap regions between nanospheres by using transformation optics approach and a modified hybrid numerical scheme.

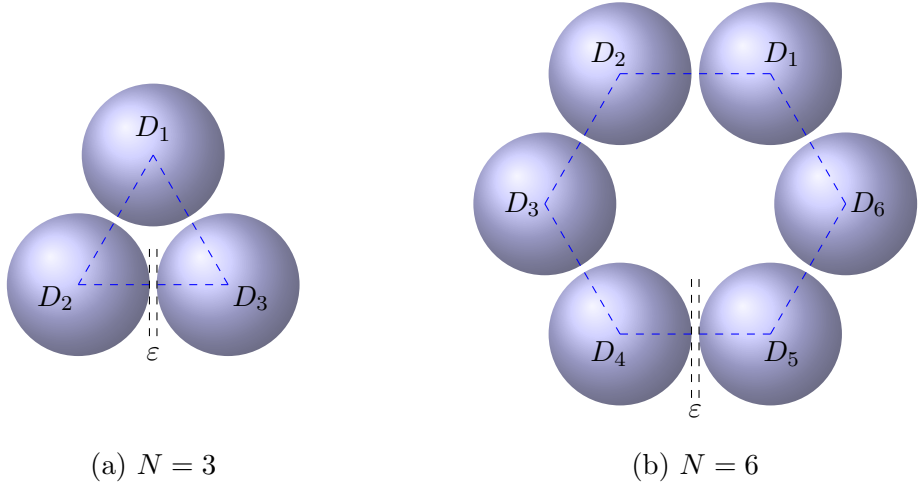
\begin{figure}[h]
\begin{tikzpicture}
\foreach \x in {1,2,3} 
{

\shade[shading=ball, ball color=blue!30!white, opacity=0.8]   ({1.15*cos (120*\x-30)},{1.15*sin (120*\x-30)}) circle[radius=0.95];
\node at ({1.5*cos (120*(\x)-30)},{1.5*sin (120*(\x)-30)}) {$D_{\x}$};

}
\foreach \x in {1,2,3}{
	\draw[dashed,color=blue] ({1.15*cos (120*\x-30)},{1.15*sin (120*\x-30)}) -- ({1.15*cos (120*(1+\x)-30)}, {1.15*sin(120*(1+\x)-30)});
	}
\draw[dashed] (-0.05,-0.1)--(-0.05,-1.3);
\draw[dashed] (0.05,-0.1)--(0.05,-1.3);
\node at (0,-1.5) {$\varepsilon$};
\node at (0,-3) {(a) $N=3$};
\node at (4,0){};
\end{tikzpicture}
\begin{tikzpicture}
\foreach \x in {1,2,3,4,5,6} 
{
\shade[shading=ball, ball color=blue!30!white, opacity=0.8]   ({2*cos (60*\x)},{2*sin (60*\x)}) circle[radius=0.95];
\node at ({2.3*cos (60*(\x))},{2.3*sin (60*(\x))}) {$D_{\x}$};
}
\foreach \x in {1,...,6}{
	\draw[dashed,color=blue] ({2*cos (60*\x)},{2*sin (60*\x)}) -- ({2*cos (60*(1+\x))}, {2*sin(60*(1+\x))});
}
\draw[dashed] (-0.05,-1)--(-0.05,-2.6);
\draw[dashed] (0.05,-1)--(0.05,-2.6);
\node at (0,-2.75) {$\varepsilon$};
\node at (0,-3.5) {(b) $N=6$};
\end{tikzpicture}
\caption{Two ring arrangements.}\label{ra}
\end{figure}

Set
\begin{equation}\label{def_alpN}
\alpha(N):=\begin{cases}
(N+1)/2,&N\text{ is odd,}\\
N/2,&N\text{ is even.}\\
\end{cases}
\end{equation}
For the ring arrangement, we have the distribution of resonant frequencies in the following.
\begin{theorem}\label{Th_ring}
Let $D$ be a ring arrangement illustrated as above. Then, as $\delta\to0$, the resonant frequencies of $D$ are given by $\omega_1=\sqrt{\frac{3v_b^2M\delta}{4\pi R^3}}\Big(1+o(1)\Big)$, and 
$$\omega_{2i-2},\omega_{2i-1}=\sqrt{\frac{3v_b^2\hat{a}_N^i}{4R^3}\delta|\log\epsilon|}+O\Big(\sqrt{\frac{\delta}{|\log\epsilon|}}+\delta\Big),\quad i=2,\dots,\alpha(N),$$
where 
\begin{equation}\label{def_hai}
\hat{a}_N^i=2(1-\cos \hat{\theta}_i),\quad\mbox{and}~\hat{\theta}_i=2\theta_i.
\end{equation}
Furthermore, if $N$ is even, $\omega_N=\sqrt{\frac{3v_b^2}{R^3}\delta|\log\epsilon|}+O\Big(\sqrt{\frac{\delta}{|\log\epsilon|}}+\delta\Big).$
\end{theorem}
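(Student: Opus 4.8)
\textit{Idea of the proof.}
The scheme parallels that of Theorem~\ref{Th_chain}: reduce the resonance problem \eqref{eq_helm} to the eigenvalue problem for the generalized capacitance matrix $\bC=(\tilde C_{ij})$, $\tilde C_{ij}=\frac{\delta v_b^2}{|D_i|}C_{ij}$, and then diagonalize $C=(C_{ij})$. The new ingredient is that the ring configuration $\cR$ is invariant under the rotation $T$ through angle $2\pi/N$ about the centre of $\cR$, which cyclically permutes the resonators, $T(D_l)=D_{l+1}$ (indices mod $N$). Uniqueness of the solution of \eqref{eq_vi} forces $v_{l+1}=v_l\circ T^{-1}$, and since $T$ is orthogonal a change of variables in $C_{ij}=\int_\Omega\nabla v_i\cdot\nabla v_j\,\rd x$ gives $C_{i+1,j+1}=C_{ij}$. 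Hence $C$ is a \emph{symmetric circulant} matrix, $C_{ij}=c_{(i-j)\bmod N}$ with $c_m=c_{N-m}$, whose eigenpairs are explicit: the Fourier vectors $\xi^{(k)}=(1,\zeta^k,\dots,\zeta^{(N-1)k})^{\top}$, $\zeta=e^{2\pi\ri/N}$, $k=0,\dots,N-1$, with eigenvalues $\lambda_k=\sum_{m=0}^{N-1}c_m\zeta^{km}$; the symmetry $c_m=c_{N-m}$ makes each $\lambda_k$ real and forces $\lambda_k=\lambda_{N-k}$.

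The second step is to locate the entries $c_m=C_{0m}$. Adjacent resonators form a neck of width $\epsilon$, so the close-to-touching analysis already used for the chain gives $c_1=C_{l,l+1}=-\pi|\log\epsilon|+O(1)$. For non-adjacent resonators (present only when $N\ge4$), the surface-to-surface distance is $2\rho\sin(m\pi/N)-2R$ with $\rho=\frac{2R+\epsilon}{2\sin(\pi/N)}$, which for $2\le m\le N-2$ is bounded below by $2R(2\cos(\pi/N)-1)>0$, uniformly in $\epsilon$; hence $c_m=O(1)$ for $2\le m\le N-2$. The diagonal entry needs no separate computation: since $\sum_i v_i$ is the equilibrium potential of $D$, one has $\sum_{l,j}C_{lj}=\mathrm{Cap}(D)=NM$, and the $\mathbb{Z}_N$-symmetry makes all $N$ row sums equal, so $\sum_j C_{lj}=M$ for every $l$; consequently $c_0=C_{ll}=M-2c_1-\sum_{2\le m\le N-2}c_m=2\pi|\log\epsilon|+O(1)$.

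In the third step I read off the eigenvalues. For $k=0$, $\lambda_0=\sum_m c_m=M$ \emph{exactly}, so $\tilde\lambda_0=\frac{3\delta v_b^2}{4\pi R^3}M$ and $\omega_1=\sqrt{\tilde\lambda_0}\,(1+o(1))=\sqrt{\frac{3v_b^2M\delta}{4\pi R^3}}(1+o(1))$, the $o(1)$ being the $\delta\to0$ error in the reduction to $\bC$. For $k\ne0$,
$$\lambda_k=c_0+2c_1\cos\tfrac{2\pi k}{N}+O(1)=2\pi|\log\epsilon|\Big(1-\cos\tfrac{2\pi k}{N}\Big)+O(1).$$
Relabelling $k=i-1$ so that $\hat\theta_i=2\theta_i=\tfrac{2\pi k}{N}$ and $\hat a_N^i=2(1-\cos\hat\theta_i)$, one gets $\tilde\lambda_k=\frac{3\delta v_b^2}{4\pi R^3}\lambda_k=\frac{3v_b^2\hat a_N^i}{4R^3}\delta|\log\epsilon|+O(\delta)$, whence $\omega=\sqrt{\tilde\lambda_k}+O(\delta)=\sqrt{\frac{3v_b^2\hat a_N^i}{4R^3}\delta|\log\epsilon|}+O\big(\sqrt{\delta/|\log\epsilon|}+\delta\big)$; the identity $\lambda_k=\lambda_{N-k}$ yields the double multiplicity recorded as $\omega_{2i-2},\omega_{2i-1}$, $i=2,\dots,\alpha(N)$. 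When $N$ is even the unpaired mode $k=N/2$ has $\lambda_{N/2}=\sum_m(-1)^m c_m=4\pi|\log\epsilon|+O(1)$, giving the extra frequency $\omega_N=\sqrt{\frac{3v_b^2}{R^3}\delta|\log\epsilon|}+O\big(\sqrt{\delta/|\log\epsilon|}+\delta\big)$. A count shows $1+2(\alpha(N)-1)$, plus $1$ more when $N$ is even, equals $N$ by \eqref{def_alpN}, so all resonant frequencies are accounted for; moreover the $\xi^{(k)}$ double as the leading-order resonant modes, paralleling Theorem~\ref{Th_mode}.

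The main obstacle is not the ring-specific algebra, which is clean once the circulant structure is available, but the quantitative inputs feeding it: the close-to-touching expansion $C_{l,l+1}=-\pi|\log\epsilon|+O(1)$ with an $O(1)$-controlled remainder, the error bound in passing from \eqref{eq_helm} to $\bC$, and the bound on $\sqrt{\tilde\lambda}-\omega$. These are exactly the estimates developed for the chain (since near each gap the geometry is the same two-sphere neck) and transfer verbatim; the genuinely new points are the exact $\mathbb{Z}_N$-symmetry (which yields both the circulant form and the exact relation $\lambda_0=M$) and the bookkeeping of the eigenvalue pairing, including the exceptional mode when $N$ is even.
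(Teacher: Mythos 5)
Your proposal is correct, and it reaches the theorem by a genuinely different route from the paper. The paper records the circulant structure of $\mathbb{C}$ (Proposition \ref{prop_C_ring}) but does not diagonalize it; instead it expands the characteristic polynomial $F(a)=\det(\mathbb{C}-a\rho(\epsilon)\mathbb{I})$ in powers of $\rho(\epsilon)$, identifies the limit polynomial $f_N(a)=2(\cos N\theta-1)$ with its double roots, and then localizes the roots of $F$ near those of $f_N$ through a chain of arguments: intermediate value theorem, a pigeonhole count (Corollary \ref{le_exact}), and a refinement lemma (Lemma \ref{le_equal_r}) based on decomposing a candidate eigenvector along $\ker\hat{\mathbb{A}}_t$ to upgrade the localization from $O(\rho(\epsilon)^{-1/2})$ to $O(\rho(\epsilon)^{-1})$. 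You instead exploit the exact $\mathbb{Z}_N$-symmetry: the relation $v_{l+1}=v_l\circ T^{-1}$ makes $\mathbb{C}$ an \emph{exactly} symmetric circulant matrix, so the discrete Fourier vectors diagonalize it exactly and every eigenvalue is the explicit sum $\lambda_k=\sum_m c_m\zeta^{km}$; the asymptotics then follow from the entry estimates alone. This buys you three things the paper's route does not give directly: exact degeneracy $\lambda_k=\lambda_{N-k}$ (rather than two roots within $O(\rho(\epsilon)^{-1})$ of each other), the exact identity $\lambda_0=M$ via the equal-row-sum argument (which subsumes Lemma \ref{le_omg1_ring}), and the eigenvectors for free, anticipating Theorem \ref{Th_mode_r}. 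What the paper's heavier machinery buys in exchange is robustness: the determinant-expansion-plus-root-localization template does not require exact symmetry and is reused essentially unchanged for the matrix arrangement in Section \ref{sec_matrix}, where no exact diagonalization is available; your argument, by contrast, is close in spirit to the circulant approach of \cite{fa} that the paper explicitly positions itself against. The quantitative inputs you assert (the neck expansion $c_1=-\pi|\log\epsilon|+O(1)$, the $O(1)$ bound for non-adjacent pairs, and the reduction to the capacitance eigenvalue problem with the $O(\delta)$ error of Lemma \ref{le_omega}) are the same ones the paper imports without reproof, so your level of rigor matches the paper's.
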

\begin{theorem}\label{Th_mode_r}
Under the assumptions of Theorem \ref{Th_ring}, let $u^i$, $1\le i\le N$, denote the subwavelength resonant modes for $D$.  Normalize $u^i$ such that
$\lim\limits_{\delta\to0}|u^i|\sim1$ for $x\in\partial D.$
Then, for resonant modes $u^i$ associated with $\omega_i$, as $\delta\to 0$, we have
\begin{equation*}
u^i(x)=\alpha_{l}^i+O(\omega_i+|\log\epsilon|^{-1})\quad\text{on}\ \partial D_l, 1\le i,l\le N,
\end{equation*}
where $\alpha_l^1=1 $, and for $t=2,\dots, \alpha(N)$,
\begin{equation*}
\begin{aligned}
\alpha^{2t-2}_l&=k_1^{2t-2}\frac{\sin (l-2)\hat{\theta}_{2t-2}}{\sin\hat{\theta}_{2t-2}}+k_2^{2t-2}\frac{\sin (l-1)\hat{\theta}_{2t-1}}{\sin\hat{\theta}_{2t-1}},\\
\alpha^{2t-1}_l&=k_1^{2t-1}\frac{\sin (l-2)\hat{\theta}_{2t-2}}{\sin\hat{\theta}_{2t-2}}+k_2^{2t-1}\frac{\sin (l-1)\hat{\theta}_{2t-1}}{\sin\hat{\theta}_{2t-1}},
\end{aligned}
\end{equation*}
with constants $k_1^{2t-2}$, $k_2^{2t-2}$, $k_1^{2t-1}$ and $k_2^{2t-1}$. Moreover, $\alpha^N_l=(-1)^l$, if $N$ is even.
\end{theorem}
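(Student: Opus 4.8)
The plan is to follow the isospectral-reduction/capacitance-matrix strategy that underlies Theorems \ref{Th_chain}–\ref{Th_mode}, now exploiting the cyclic symmetry of the ring. By Lemma \ref{le_omega} and \cite[Lemma 2.11]{adh}, a resonant mode $u^i$ is, to leading order, a superposition $u^i(x) \approx \sum_{l=1}^N (\mathbf{a}^i)_l v_l(x)$, where $\mathbf{a}^i$ is the eigenvector of the generalized capacitance matrix $\tilde{\mathbb{C}}$ associated with the eigenvalue $\omega_i^2$, and $v_l$ solves \eqref{eq_vi}. Since the $v_l$ satisfy $v_l \equiv \delta_{lm}$ on $\partial D_m$, the value of $u^i$ on $\partial D_l$ is $(\mathbf{a}^i)_l + O(\omega_i + |\log\epsilon|^{-1})$, the error term coming from the wavenumber correction in the layer-potential expansion and from the off-diagonal contribution of the $v_m$ on $\partial D_l$ (of size $O(R/d) = O(|\log\epsilon|^{-1})$ in the dilute-in-logarithm regime dictated by \eqref{def_epsilon}). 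So the entire theorem reduces to computing the eigenvectors of the leading-order capacitance matrix; I would set $\alpha_l^i := (\mathbf{a}^i)_l$ and read off the stated formulas.

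Next I would pin down the leading-order structure of $\tilde{\mathbb{C}}$ for the ring. As established in the proof of Theorem \ref{Th_ring}, the dominant (order $|\log\epsilon|$) part of $C = (C_{ij})$ is a graph-Laplacian-type matrix: diagonal entries $2\pi|\log\epsilon|(1+o(1))$ (each sphere has two close neighbors), nearest-neighbor entries $-\pi|\log\epsilon|(1+o(1))$, and the rest lower order; here "nearest neighbor" is taken cyclically, $D_N$ adjacent to $D_1$. Thus $\tilde{\mathbb{C}}$ is, to leading order, proportional to the circulant matrix $2I - S - S^{N-1}$ where $S$ is the cyclic shift. Its eigenvectors are the Fourier modes $\mathbf{f}^{(j)} = (\zeta^{lj})_{l=1}^N$, $\zeta = e^{2\pi i/N}$, with eigenvalues $2 - 2\cos(2\pi j/N) = 2(1 - \cos\hat\theta)$ in the notation $\hat\theta_i = 2\theta_i$, $\theta_i = (i-1)\pi/N$ — which is exactly \eqref{def_hai} and matches the $\omega$-formulas in Theorem \ref{Th_ring}. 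The eigenvalue $j$ and eigenvalue $N-j$ coincide, giving the paired frequencies $\omega_{2i-2}, \omega_{2i-1}$; the corresponding two-dimensional eigenspace is spanned by the real vectors $(\cos l\hat\theta)_l$ and $(\sin l\hat\theta)_l$, equivalently (after an index shift absorbing the labeling convention and a choice of basis adapted to the not-exactly-circulant lower-order terms) by $\big(\tfrac{\sin(l-2)\hat\theta_{2t-2}}{\sin\hat\theta_{2t-2}}\big)_l$ and $\big(\tfrac{\sin(l-1)\hat\theta_{2t-1}}{\sin\hat\theta_{2t-1}}\big)_l$; a general element is the displayed linear combination with scalars $k_1^{2t-2},\dots,k_2^{2t-1}$. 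The mode $j=0$ is the constant vector $\alpha_l^1 = 1$ (this is the "monopole" Minnaert-type mode, with frequency governed by $M$ rather than $|\log\epsilon|$). When $N$ is even there is the extra non-degenerate mode $j = N/2$, eigenvector $((-1)^l)_l$, eigenvalue $4$, giving $\hat a = 4$, i.e. $\omega_N$ with the constant $3v_b^2/R^3$, and $\alpha_l^N = (-1)^l$.

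The main obstacle is the same one flagged in the chain case: the capacitance matrix is not \emph{exactly} circulant — the close-to-touching asymptotics of $C_{ij}$ produce $o(|\log\epsilon|)$ corrections (including the $O(1)$ and $O(\epsilon^{1/2})$ terms visible in the two-sphere formulas of \cite{ady,lz}) that break the perfect cyclic symmetry only mildly, but enough that one cannot simply invoke circulant diagonalization. I would handle this by a perturbation argument: write $\tilde{\mathbb{C}} = c|\log\epsilon|(\mathbb{L}_{\mathrm{circ}} + |\log\epsilon|^{-1}E)$ with $\mathbb{L}_{\mathrm{circ}}$ the circulant graph Laplacian and $E$ bounded, then apply eigenvector perturbation theory. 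For the simple eigenvalues ($j=0$, and $j=N/2$ when $N$ even) this immediately gives the eigenvector up to $O(|\log\epsilon|^{-1})$, justifying $\alpha_l^1 = 1$ and $\alpha_l^N = (-1)^l$. For the genuinely degenerate pairs one must check that the spectral gap between distinct pairs is itself of order $|\log\epsilon|$ (true since the $\hat a_N^i$ are distinct constants), so the rank-2 spectral projection is $O(|\log\epsilon|^{-1})$-close to the circulant one; the perturbation $E$ then only rotates the basis \emph{within} each 2D eigenspace, which is precisely why the theorem states the result with undetermined constants $k_1^{2t-2}, k_2^{2t-2}, k_1^{2t-1}, k_2^{2t-1}$ rather than fixed numbers. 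Finally I would track the error through the layer-potential reduction to confirm the uniform $O(\omega_i + |\log\epsilon|^{-1})$ bound on $\partial D_l$, and verify the normalization $\lim_{\delta\to 0}|u^i| \sim 1$ on $\partial D$ is consistent with these (bounded, not all vanishing) coefficient vectors.
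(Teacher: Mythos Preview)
Your proposal is correct and follows essentially the same route as the paper: reduce to the eigenvectors of the capacitance matrix via the layer-potential argument (the paper's Theorem \ref{Th_mode} methodology), identify the leading-order circulant structure $\hat{\mathbb{A}}$ (Proposition \ref{prop_C_ring}), diagonalize it, and then run a perturbation argument for the degenerate pairs---which is exactly the content of the paper's Lemma \ref{le_modes_ring}, where one decomposes $\boldsymbol{\alpha}^{2t-2}=\boldsymbol{\beta}^t+\boldsymbol{\gamma}^{2t-2}$ with $\boldsymbol{\beta}^t\in\ker\hat{\mathbb{A}}_t$ and shows $\boldsymbol{\gamma}^{2t-2}=O(\rho(\epsilon)^{-1})$.

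One small correction: your attribution of the $O(|\log\epsilon|^{-1})$ error to ``off-diagonal contribution of the $v_m$ on $\partial D_l$'' is off, since $v_m=\delta_{lm}$ \emph{exactly} on $\partial D_l$ by \eqref{eq_vi}; the $O(|\log\epsilon|^{-1})$ arises purely from the eigenvector perturbation $\tilde{\boldsymbol{\alpha}}^i=\boldsymbol{\alpha}^i+O(\rho(\epsilon)^{-1})$, while the $O(\omega_i)$ comes from the layer-potential expansion. This does not affect the validity of your argument.
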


\subsubsection{A finite matrix arrangement} Now we consider the third case. Assume $N = m \times n$, where $n \geq m \geq 2$ and $m, n \in \mathbb{Z}$. For the ball located in row $\gamma$ and column $\alpha$, we can count it as $i = (\gamma-1) n + \alpha$, where $1 \leq \gamma \leq m$ and $1 \leq \alpha \leq n$. Denote its center as  
$x_i = ((\alpha-1)(2R+\epsilon), (\gamma-1)(2R+\epsilon), 0).$
This configuration of $N$ resonators is referred to as a matrix arrangement, as illustrated in Figure \ref{ma}.

\begin{figure}[h]
\begin{tikzpicture}
\def\myds{1}
\shade[shading=ball, ball color=blue!30!white, opacity=0.8]  (0,0) circle[radius=\myds];
\shade[shading=ball, ball color=blue!30!white, opacity=0.8]  (2.1,0) circle[radius=\myds];
\shade[shading=ball, ball color=blue!30!white, opacity=0.8]  (4.2,0) circle[radius=\myds];
\node  at   (5.55,0){$\cdots$};
\shade[shading=ball, ball color=blue!30!white, opacity=0.8]  (6.8,0) circle[radius=\myds];
\shade[shading=ball, ball color=blue!30!white, opacity=0.8]  (0,2.1) circle[radius=\myds];
\shade[shading=ball, ball color=blue!30!white, opacity=0.8]  (2.1,2.1) circle[radius=\myds];
\shade[shading=ball, ball color=blue!30!white, opacity=0.8] (4.2,2.1) circle[radius=\myds];
\node  at   (5.55,2.1){$\cdots$};
\shade[shading=ball, ball color=blue!30!white, opacity=0.8] (6.8,2.1) circle[radius=\myds];
\node  at   (0,3.5){$\vdots$};
\node  at   (2.1,3.5){$\vdots$};
\node  at   (4.2,3.5){$\vdots$};
\node  at   (6.8,3.5){$\vdots$};
\shade[shading=ball, ball color=blue!30!white, opacity=0.8]  (0,4.7) circle[radius=\myds];
\shade[shading=ball, ball color=blue!30!white, opacity=0.8]  (2.1,4.7) circle[radius=\myds];
\shade[shading=ball, ball color=blue!30!white, opacity=0.8]  (4.2,4.7) circle[radius=\myds];
\node  at   (5.55,4.7){$\cdots$};
\shade[shading=ball, ball color=blue!30!white, opacity=0.8]  (6.8,4.7) circle[radius=\myds];

\node at (0,0) {$D_1$};
\node at (2.1,0) {$D_2$};
\node at (4.2,0) {$D_3$};
\node at (6.8,0) {$D_n$};
\node at (0,2.1) {$D_{n+1}$};
\node at (2.1,2.1) {$D_{n+2}$};
\node at (4.2,2.1) {$D_{n+3}$};
\node at (6.8,2.1) {$D_{2n}$};
\node at (0,4.7) {$D_{(m-1)n+1}$};
\node at (2.1,4.7) {$D_{(m-1)n+2}$};
\node at (4.2,4.7) {$D_{(m-1)n+3}$};
\node at (6.8,4.7) {$D_{N}$};

\node at (1.05,-1.){$\varepsilon$};
\draw [dashed](1,0.8) -- (1,-0.8);
\draw [ dashed](1.1,0.8) -- (1.1,-0.8);
\node  at (-1,1.05){$\varepsilon$};
\draw [dashed, line width=0.3pt](0.8,1) -- (-0.8,1);
\draw [dashed, line width=0.3pt](0.8,1.1) -- (-0.8,1.1);
\end{tikzpicture}
\caption{Matrix arrangements.}\label{ma}
\end{figure}
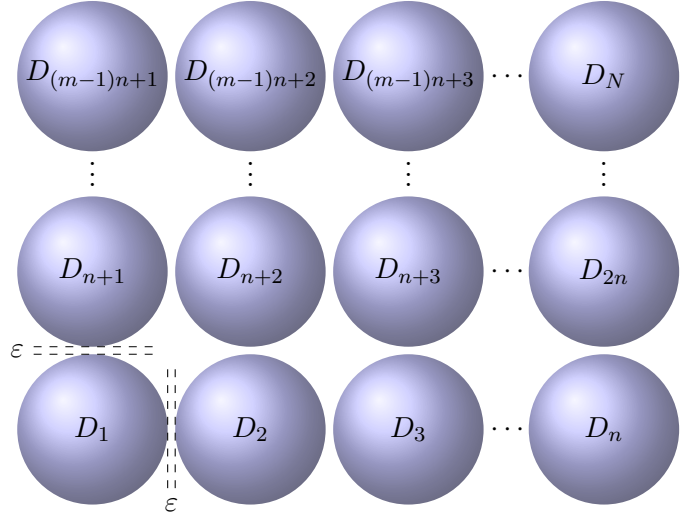

When the resonators are arranged in a matrix format, the distribution of their resonant frequencies becomes more intricate. Let $a_{m}^{\gamma}$ and $a_{n}^{\alpha}$ be defined in \eqref{def_anin}, and denote $\bar{a}_N^1\le\dots\le\bar{a}_N^N$ as all the roots of the polynomial in $a$ given by
\begin{equation}\label{amianj}
\prod_{\gamma=1}^{m}\prod_{\alpha=1}^{n}\big(a-(a_{m}^{\gamma}+a_{n}^{\alpha})\big).
\end{equation}

\begin{theorem}\label{Th_matrix}
Let $D$ be a matrix arrangement illustrated as above and $\bar{a}_N^i$ be the roots of the polynomial in \eqref{amianj}. Then, as $\delta\to0$, the resonant frequencies of $D$ are given by $$\omega_1=\sqrt{\frac{3v_b^2M\delta}{4\pi R^3}}\Big(1+o(1)\Big),~\mbox{and}~ \omega_i=\sqrt{\frac{3v_b^2\bar{a}_N^i\delta}{4R^3}|\log\epsilon|}+O\Big(\sqrt{\frac{\delta}{|\log\epsilon|}}+\delta\Big),\quad i=2,\dots,N.$$
\end{theorem}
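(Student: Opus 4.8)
The plan is to treat the matrix arrangement exactly as the chain in Theorem~\ref{Th_chain}. By Lemma~\ref{le_omega} and \cite[Lemma 2.11]{adh}, the subwavelength resonance problem \eqref{eq_helm} reduces to the eigenvalue problem for the generalized capacitance matrix $\tilde{\mathbb{C}}=\frac{\delta v_b^2}{|D_1|}\,C$, where $C=(C_{ij})$ and $|D_1|=\frac43\pi R^3$, after which $\omega_i^2=\lambda_i(\tilde{\mathbb{C}})\,(1+o(1))$ with the quantitative error bookkeeping taken over verbatim from the chain case. So everything reduces to the asymptotics of the spectrum of $C$ as $\epsilon\to0$, and the point is that in the grid layout this spectrum is governed, to leading order, by the graph Laplacian of the grid graph on the $N$ ball centres, whose eigenvalues are precisely the roots of \eqref{amianj}.

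\emph{Step 1: asymptotics of the capacitance matrix.} I would first establish
\[
C=\mathsf{g}_\epsilon\,\mathcal{L}+E,\qquad \mathcal{L}:=\mathrm{Deg}(G)-\mathrm{Adj}(G),\qquad \|E\|=O(1),
\]
where $G=P_m\,\square\,P_n$ is the Cartesian product of the two path graphs (so that two balls are adjacent in $G$ precisely when they are horizontal or vertical neighbours), and $\mathsf{g}_\epsilon>0$ is the per-gap singular capacitance of two radius-$R$ balls at separation $\epsilon$, i.e.\ the same quantity that produces the $|\log\epsilon|$ factor in Theorem~\ref{Th_chain}, normalised so that $\frac{\delta v_b^2}{|D_1|}\mathsf{g}_\epsilon=\frac{3v_b^2\delta}{4R^3}|\log\epsilon|$. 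Two facts drive this. First, in the matrix layout the only narrow regions are the axis-aligned gaps between horizontal/vertical neighbours (regions of the type \eqref{def_omgl}); any two non-adjacent balls --- in particular diagonal neighbours --- sit at a fixed $O(1)$ distance and contribute nothing singular. Second, the at most four gaps surrounding an interior ball $D_i$ point in mutually separated directions, so the singular part of $\nabla v_i$ concentrates separately in each gap and the contributions to $C_{ii}=\int_\Omega|\nabla v_i|^2$ add without cross terms. Applying the narrow-region energy estimates of \cite{lz,ady} gap by gap then gives $C_{ii}=(\deg_G i)\,\mathsf{g}_\epsilon+O(1)$, $C_{ij}=-\mathsf{g}_\epsilon+O(1)$ when $i\sim j$ in $G$, and $C_{ij}=O(1)$ otherwise, which is the displayed decomposition.

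\emph{Step 2: diagonalising the leading term and concluding.} Since $G=P_m\,\square\,P_n$, one has $\mathcal{L}=\mathcal{L}_{P_m}\otimes I_n+I_m\otimes\mathcal{L}_{P_n}$, so the eigenvalues of $\mathcal{L}$ are $\{a_m^\gamma+a_n^\alpha:1\le\gamma\le m,\ 1\le\alpha\le n\}$ with eigenvectors the tensor products of the discrete-sine eigenvectors of the two path Laplacians. These are exactly the $N$ roots $\bar{a}_N^1\le\cdots\le\bar{a}_N^N$ of \eqref{amianj}, and $\bar{a}_N^1=a_m^1+a_n^1=0$ is a simple root because $G$ is connected. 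Weyl's inequality gives $|\lambda_i(C)-\mathsf{g}_\epsilon\bar{a}_N^i|\le\|E\|=O(1)$. For $i\ge2$, $\bar{a}_N^i>0$ and $\mathsf{g}_\epsilon\sim|\log\epsilon|\to\infty$, hence $\lambda_i(\tilde{\mathbb{C}})=\frac{\delta v_b^2}{|D_1|}\big(\mathsf{g}_\epsilon\bar{a}_N^i+O(1)\big)=\frac{3v_b^2\bar{a}_N^i\delta}{4R^3}|\log\epsilon|+O(\delta)$; extracting the square root and invoking the error analysis of Theorem~\ref{Th_chain} yields $\omega_i=\sqrt{\frac{3v_b^2\bar{a}_N^i\delta}{4R^3}|\log\epsilon|}+O\big(\sqrt{\delta/|\log\epsilon|}+\delta\big)$. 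For $i=1$: the Rayleigh quotient of $C$ at $\mathbf{1}=(1,\dots,1)^{\top}$ equals $\mathrm{Cap}(D)/N=M=O(1)$ while every other eigenvalue blows up like $|\log\epsilon|$, so a short variational argument pins the bottom eigenvector of $C$ to $N^{-1/2}\mathbf{1}$ and gives $\lambda_1(C)=M(1+o(1))$, whence $\omega_1=\sqrt{\frac{3v_b^2M\delta}{4\pi R^3}}(1+o(1))$. This exhibits all $N$ frequencies with the claimed asymptotics, each subwavelength since $\omega_1\sim\sqrt{\delta}$ and $\omega_i\sim\sqrt{\delta|\log\epsilon|}$ tend to $0$.

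\emph{Main obstacle.} The delicate part is Step 1 --- rigorously decoupling the (up to four) narrow gaps around each resonator and confirming that moving from a one-dimensional chain to a two-dimensional grid creates no new singular interaction, so that the entries of $C$ carry exactly the two-ball per-gap constant $\mathsf{g}_\epsilon$. I expect this to follow from the localised narrow-region estimates already developed in \cite{lz}, but it is where the real work lies. Once it is in place the remainder is routine; in particular, coincidences among the values $a_m^\gamma+a_n^\alpha$ --- which are precisely why a matrix arrangement supports fewer distinct resonant frequencies than its $N$ resonators --- cause no difficulty, since the $O(1)$ perturbation $E$ never affects the leading order.
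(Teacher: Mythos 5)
Your proposal is correct, and it reaches the conclusion by a genuinely different route for the spectral part. The paper also starts from the entrywise asymptotics of $\mathbb{C}$ (its Proposition \ref{prop_C_matrix}, which is exactly your decomposition $C=\rho(\epsilon)\mathcal{L}+E$ with $\mathcal{L}$ the grid-graph Laplacian and $\|E\|=O(1)$; the paper states this by analogy with Proposition \ref{prop_C_train} and omits the proof, so you and the paper are at the same level of rigor on what you rightly flag as the main obstacle). After that the paths diverge. The paper expands $\det(\mathbb{C}-a\rho(\epsilon)\mathbb{I})$ in powers of $\rho(\epsilon)$, computes the leading coefficient $f_N(a)=\det(\tilde{\mathbb{A}}-a\mathbb{I})$ by block elementary row operations reducing $\tilde{\mathbb{A}}$ to an upper triangular block form with last block $h(\mathbb{A})=\prod_\gamma(\mathbb{A}+a_m^\gamma\mathbb{I}_n)$, obtains the factorization \eqref{fN_de}, and then localizes the roots of $F(a)$ near each $\bar{a}_N^i$ by sign-change/intermediate-value arguments, supplemented by the eigenvector-decomposition trick of Lemma \ref{le_equal_r} to shrink the window to $O(\rho(\epsilon)^{-1})$ at multiple roots. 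You instead diagonalize the leading matrix directly via the Kronecker-sum identity $\mathcal{L}_{P_m\square P_n}=\mathcal{L}_{P_m}\otimes I_n+I_m\otimes\mathcal{L}_{P_n}$ (noting that $\mathcal{L}_{P_n}$ is exactly the matrix $\mathbb{A}$ of \eqref{def_A}) and then invoke Weyl's inequality for symmetric perturbations, which delivers $|\lambda_i(\mathbb{C})-\rho(\epsilon)\bar{a}_N^i|=O(1)$ in one line, with multiplicities handled automatically and no characteristic-polynomial bookkeeping. Your variational treatment of $\lambda_1$ (Rayleigh quotient at $\mathbf{1}$ equals $\frac1N\sum_{ij}C_{ij}=M$, plus an eigenvector-perturbation estimate to pin the bottom eigenvector near $N^{-1/2}\mathbf{1}$) likewise replaces the paper's second-order determinant expansion in Lemma \ref{le_omg1_chain}/\ref{le_omg1_matrix} and yields the same $\lambda_1=M(1+o(1))$. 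Both approaches give identical error terms; yours is shorter and more structural, while the paper's determinant computation is what also produces the explicit polynomial \eqref{amianj} used in its statement and in the eigenvector analysis for the resonant modes.
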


\begin{remark}
We now compare the three configurations (chain, ring, and matrix). Define $\eta=\sqrt{(3\Lambda v_b^2)/(4R^3)}\delta^{\beta/2}$. 
In the case of a chain arrangement, with $\epsilon$ chosen as in \eqref{def_epsilon}, the resonant frequencies are  $\omega_1=\sqrt{\frac{3v_b^2M\delta}{4\pi R^3}}\big(1+o(1)\big)$ and $\omega_i=\sqrt{a_N^i}\eta+O(\delta^{1-\beta/2})$ for $ i=2,\dots, N$. At leading order, there are $N$ distinct resonant frequencies. Since $0\le a_N^i < 4$, these frequencies satisfy $0<\omega_i<2\eta$ for $ i=1,\dots, N$.

For ring arrangements, $0\le \hat{a}_N^i \le 4$, so the resonant frequencies (up to an $O(\delta^{1-\beta/2})$ error) satisfy $0<\omega_i\le2\eta$ for $i=1,\dots,N$. At leading order, the number of distinct frequencies is $(N+1)/2$ if $N$ is odd, and $(N+2)/2$ if $N$ is even.

In the case of matrix arrangements, the frequencies are $\omega_1=\sqrt{\frac{3v_b^2M\delta}{4\pi R^3}}\big(1+o(1)\big)$ and $\omega_i=\sqrt{\bar{a}_N^i}\eta+O(\delta^{1-\beta/2})$ for $i=2,\dots,N$. Let $s(m,n)$ denote the number of distinct roots of \eqref{amianj}. We find $s(m,n)\le N$, implying $s(m,n)$ distinct leading-order resonant frequencies. Since $0\le \bar{a}_N^i <8$, the frequencies span $0<\omega_i<2\sqrt{2}\eta$ for $i=1,\dots,N$.

There are two key observations. Multi-row (matrix) configurations exhibit a broader frequency range ($0<\omega_i<2\sqrt{2}\eta$) but fewer distinct frequencies. Chain and ring configurations share the same frequency range ($0<\omega_i\le2\eta$), yet the ring configuration has fewer distinct frequencies. These results are summarized in Tables \ref{tab1} and \ref{tab2} blow.

\begin{table}[h]
\centering
\begin{tabular}{|c|c|c|}
\hline
\diagbox{Arrangements\!\!\!\!}{Frequencies\\ distribution} & \quad\quad\quad Span \quad\quad\quad\quad~~~& \quad\quad\quad Number\quad \quad\quad\quad~~~\\ \hline\rule{0pt}{13pt}
Chain & (0,\,2$\eta$) & $N$ \\ \hline \rule{0pt}{13pt}
\multirow{2}{*}{Ring} & \multirow{2}{*}{(0,\,2$\eta$]} & $(N+1)/2$, if $N$ is odd\\ 
\cline{3-3}\rule{0pt}{13pt}
 &&$(N+2)/2$, if $N$ is even\\ \hline\rule{0pt}{13pt}
Matrix & (0,\,2$\sqrt{2}\eta$) & $s(m,n)$ \\ \hline
\end{tabular}
\caption{Frequencies distribution span and number for the three different arrangements.}\label{tab1}
\end{table}
\begin{table}[h]
\centering
\begin{tabular}{|c|c|c|c|}
\hline
\diagbox{Arrangements\!\!\!\!\!\!\!\!\!\!\!}{Number of\\ \ resonators} & \quad$N=4$\quad\quad &\quad$N=5$\quad\quad& $N=16$ \\ \hline\rule{0pt}{13pt}
Chain & 4 & 5 &16\rule{0pt}{13pt} \\ \hline \rule{0pt}{13pt}
Ring & 3 &3&9   \\\hline\rule{0pt}{13pt}
\multirow{2}{*}{Matrix} &  \multirow{2}{*}{3} & \multirow{2}{*}{\diagbox{ }{ }} & \quad 15 ($m=2,n=8$)\quad\quad \\
\cline{4-4}\rule{0pt}{13pt}
&&&\, 9  ($m=4,n=4$)\\
\hline
\end{tabular}
\caption{For $N=$ 4, 5, and 16, the number of distinct leading-order terms of frequencies associated with the three different arrangements.}\label{tab2}
\end{table}
 \end{remark}

\begin{remark}
In \cite{fa}, the authors used the properties of circulant matrices to express the resonant frequencies of a ring configuration in terms of the entries of the capacitance matrix. We emphasize that the formulas derived in Theorems \ref{Th_ring} and \ref{Th_mode_r}, which characterize the resonant frequencies and their corresponding modes, are more concise and intuitively interpretable than existing results. Furthermore, our approach associates each resonant frequency with its corresponding mode, enabling a clearer physical interpretation.
\end{remark}

This paper is organized as follows. In Section \ref{sec_layer} we employs layer potential techniques to formulate the problem, reducing the analysis of resonant mode characteristics to the calculation of capacitance coefficients. In Section \ref{sec_chain} we analyzes a chain configuration, where spherical resonators are aligned in a single row. Section \ref{sec_matrix} explores multi-row (matrix) configurations of spherical resonators. Section \ref{sec_ring} investigates the spectral properties of ring-shaped resonator arrangements. In Section \ref{sec_exam}, we  provides several examples to illustrate our findings in Sections \ref{sec_chain}, \ref{sec_ring}, and \ref{sec_matrix}. In Section \ref{sec_con} we concludes the paper with some remarks.

\section{Preliminary}\label{sec_layer}
The main tool that allows us to reveal the resonant
properties of the system $D =\cup_{l=1}^{N}D_{l}$ is the Helmholtz single layer potential. In this section, we give some preliminary results for the single layer potential theory and reduce the resonance problem \eqref{eq_helm} to a matrix eigenproblem for the generalized capacitance matrix. 

Let $G^k(x,y)$ be the Helmholtz Green's function
$$G^k(x,y):=-\frac{e^{\mathrm{i}k|x-y|}}{4\pi|x-y|},\quad x,y\in\bR^3, k\ge 0,$$
and $\S_D^k:L^2(\partial D)\to H_{\mathrm{loc}}^1(\bR^3)$ be the corresponding single layer potential
\begin{equation*}
\S_D^k[\phi](x) := \int_{\partial D} G^k(x,y) \phi(y) \, \rd y, \quad x\in\bR^3,\phi \in L^2(\partial D).
\end{equation*}
The Neumann--Poincar\'e operator $\cK_D^{k,*}:L^2(\pD)\to L^2(\pD)$ is defined by
$$\cK_D^{k,*}[\phi](x):=\int_{\partial D}\frac{\partial}{\partial\nu_x}G^k(x,y)\phi(y)\rd y,\quad x\in\pD,$$
where $\partial/\partial\nu_x$ denotes the outward normal derivative at the point $x$. Then we have the following lemma, which is from \cite{adh}.

\begin{lemma}(\!\cite{adh})
In the regime $\omega\to 0$, the Helmholtz problem \eqref{eq_helm} is equivalent to finding $(\psi,\phi)\in L^2(\pD)\times L^2(\pD)$ such that
\begin{equation}\label{eq_phipsi}
\cA(\omega,\delta)\left(\begin{matrix}
\phi\\
\psi
\end{matrix}\right)=\left(\begin{matrix}
u^{in}\\
\delta\frac{\partial u^{in}}{\partial\nu_x}
\end{matrix}\right),
\end{equation}
where
\begin{equation*}\label{def_cA}
\cA(\omega,\delta):=\left(\begin{matrix}
\S_D^{k_b}&-\S_D^{k}  \\
-\frac{1}{2}\cI+\cK_D^{k_b,*}&-\delta(\frac{1}{2}\cI+\cK_D^{k,*}) 
\end{matrix}\right),
\end{equation*}
and $\cI$ is the identity operator on $L^2(\pD)$. 
\end{lemma}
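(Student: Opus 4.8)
The plan is to realise any field obeying \eqref{eq_helm} as a superposition of Helmholtz single layer potentials and then to translate the four conditions of \eqref{eq_helm} into the operator identity \eqref{eq_phipsi} via the classical jump relations; the only place the assumption $\omega\to 0$ really enters is in checking that this layer potential representation is exhaustive. Concretely, I would first fix the ansatz
\begin{equation*}
u=\begin{cases}
u^{in}+\S_D^{k}[\psi] & \text{in }\Omega,\\
\S_D^{k_b}[\phi] & \text{in }D,
\end{cases}\qquad (\phi,\psi)\in L^2(\pD)\times L^2(\pD).
\end{equation*}
Since $G^{k}$ and $G^{k_b}$ are the outgoing fundamental solutions of the Helmholtz operator with wavenumbers $k$ and $k_b$, this $u$ automatically satisfies $\Delta u+k^2u=0$ in $\Omega$, $\Delta u+k_b^2u=0$ in $D$, and the Sommerfeld radiation condition for $u-u^{in}$, for \emph{every} choice of densities, so all of the content sits in the two interface conditions on $\pD$.

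Next I would impose those conditions. Because $\S_D^{\kappa}[\,\cdot\,]$ is continuous across $\pD$, the requirement $u|_+-u|_-=0$ reads $\S_D^{k_b}[\phi]-\S_D^{k}[\psi]=u^{in}$ on $\pD$, which is precisely the first row of \eqref{eq_phipsi}. For the conormal condition I would use the jump formula $\partial_\nu\S_D^{\kappa}[\varphi]\big|_{\pm}=\big(\pm\tfrac12\cI+\cK_D^{\kappa,*}\big)[\varphi]$; substituting into $\delta\,\partial_\nu u|_+-\partial_\nu u|_-=0$ and rearranging produces $\big(-\tfrac12\cI+\cK_D^{k_b,*}\big)[\phi]-\delta\big(\tfrac12\cI+\cK_D^{k,*}\big)[\psi]=\delta\,\partial_\nu u^{in}$, the second row. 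Reading the two rows together gives $\cA(\omega,\delta)(\phi,\psi)^{\mathsf{T}}=(u^{in},\delta\,\partial_\nu u^{in})^{\mathsf{T}}$. Conversely, for any solution $(\phi,\psi)$ of \eqref{eq_phipsi}, the function built from the ansatz satisfies every line of \eqref{eq_helm} together with the radiation condition by running these identities backwards, so solutions of \eqref{eq_phipsi} always yield solutions of \eqref{eq_helm}.

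It then remains to show that \emph{every} solution $u$ of \eqref{eq_helm} has the ansatz form, which is what makes the two problems genuinely equivalent. Here I would use that, in three dimensions, $\S_D^{0}$ is invertible on the relevant trace spaces, hence so is $\S_D^{\kappa}$ for $|\kappa|$ small, and that for $\omega$ small $k_b^2=\omega^2/v_b^2$ lies below the first Dirichlet eigenvalue of $-\Delta$ on $D$, so the interior Helmholtz--Dirichlet problem is uniquely solvable. Given a solution $u$, I would set $\phi:=(\S_D^{k_b})^{-1}[u|_{\pD}]$ and $\psi:=(\S_D^{k})^{-1}[(u-u^{in})|_{\pD}]$: then $\S_D^{k_b}[\phi]$ and $u$ solve the same interior Helmholtz--Dirichlet problem and hence coincide on $D$, while $\S_D^{k}[\psi]$ and $u-u^{in}$ solve the same exterior Helmholtz--Dirichlet problem with the Sommerfeld condition and hence coincide on $\Omega$. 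So $u$ is of the ansatz form, and the computation of the previous paragraph turns the four conditions of \eqref{eq_helm} into \eqref{eq_phipsi}.

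The main obstacle I anticipate is the bookkeeping around these operator-theoretic facts in the regime $\omega\to 0$: excluding Dirichlet eigenvalues of $-\Delta$ on $D$, ensuring that $\S_D^{k}$ and $\S_D^{k_b}$ are boundedly invertible for all sufficiently small $\omega$, and invoking uniqueness for the exterior radiating problem. All of these are standard in layer potential theory (cf. the references in \cite{adh}), after which the lemma is essentially the jump relations. Since the statement is quoted from \cite{adh}, I would present the argument above as a concise reconstruction rather than a self-contained proof.
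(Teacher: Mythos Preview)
Your argument is correct and follows the same layer-potential ansatz and jump-relation computation that the paper uses. The paper's own proof is considerably briefer: it only verifies that the ansatz $u=u^{in}+\S_D^k[\psi]$ in $\Omega$, $u=\S_D^{k_b}[\phi]$ in $D$ satisfies the Helmholtz equations and then observes that the transmission conditions force \eqref{eq_phipsi}, without spelling out the converse direction or the exhaustiveness of the representation that you carefully address via the invertibility of $\S_D^\kappa$ and uniqueness for the interior/exterior Dirichlet problems in the regime $\omega\to 0$; in that sense your reconstruction is more complete than what the paper records.
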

\begin{proof}
It is easy to see that the function $u$ defined by
\begin{equation}\label{us}
u=\begin{cases}
u^{in}+\S_D^k[\psi](x),&x\in\Omega,\\
\S_D^{k_b}[\phi](x),&x\in D,
\end{cases}
\end{equation}
satisfies the first two equations in \eqref{eq_helm} for surface potentials $(\phi,\psi)\in L^2(\pD)\times L^2(\pD)$. It is known from \cite{ck} that for the single layer potential $\S_D^k$ and the Neumann--Poincar\'e operator $\cK_D^{k,*}$, the following relation holds:
$\frac{\partial}{\partial\nu_x}\S_D^k[\phi]\big|_\pm=(\pm\frac{1}{2}\cI+\cK_D^{k,*})[\phi]$.
To ensure that $u$ is the solution to equation \eqref{eq_helm}, we simply need to choose $(\phi,\psi)$ that satisfies the transmission condition across $\pD$, meaning that $(\phi,\psi)$ solves equation \eqref{eq_phipsi}.
\end{proof}

\begin{definition}
We refer to a subwavelength resonance as a complex frequency $\omega=\omega(\delta)$ that satisfies $\omega(\delta)\to 0$ as $\delta\to 0$, and for which \eqref{eq_phipsi} has a non-zero solution $u(\omega,\delta)$ for zero incoming field $u^{in}$:
\begin{equation*}
\cA(\omega,\delta)\left(\begin{matrix}
\phi\\
\psi
\end{matrix}\right)=\left(\begin{matrix}
0\\
0
\end{matrix}\right).
\end{equation*}
For each resonant frequency $\omega$, we define the corresponding resonant mode (or eigen-mode) as 
\begin{equation}\label{def_eigenmode}
u=\begin{cases}
\S_D^{k}[\psi](x),&x\in \Omega,\\
\S_D^{k_b}[\phi](x),&x\in D.
\end{cases}
\end{equation}
\end{definition}

Under the assumption that $\delta<< 1$, we analyze \eqref{eq_phipsi} through expansion. For sufficiently small $k>0$, the Helmholtz Green's function can be expanded as follows:
\begin{align*}
G^k(x,y)=\sum_{n=0}^\infty -\frac{\mathrm{i}^n}{4\pi n!}|x-y|^{n-1}k^n
=:\sum_{n=0}^{\infty}G_n(x,y)k^n.
\end{align*}
By this expansion, we obtain an expansion for $\S_D^k$:
\begin{align}\label{expan_s}
\S_D^k=\S_D+\sum_{n=1}^\infty k^n\S_{D,n},
\end{align}
where 
$\S_{D,n}[\phi]=\int_{\partial D}G_n(x,y)\phi(y)\rd y,~ x\in\bR^3,\phi\in L^2(\pD),$
for $n=0,1,\dots$, and $\S_D=\S_{D,0}$ represents the Laplace single layer potential. Similarly, for $\cK_D^{k,*}$, we have
\begin{align}\label{expan_k}
\cK_D^{k,*}=\cK_D^*+\sum_{n=1}^\infty k^n\cK_{D,n},
\end{align}
where
$\cK_{D,n}[\phi]=\int_{\partial D}\frac{\partial}{\partial\nu_x}G_n(x,y)\phi(y)\rd y,~ x\in\bR^3,\phi\in L^2(\pD),$ for $n=0,1,\dots$, 
and $\cK_D^*=\cK_{D,0}$ is the Neumann--Poincar\'e operator corresponding to the Laplacian. As in \cite{ady}, we could write $\S_D^k=\S_D+O(k)$ and $\cK_D^{k,*}=\cK_D^*+O(k)$ in the relevant operator norms, respectively.

We are interested in how the kernel of  
\begin{equation*}
\cA(0,0)=\left(\begin{matrix}
\S_D&-\S_D  \\
-\frac{1}{2}I+\cK_D^{*}&0
\end{matrix}\right),
\end{equation*}
is perturbed when $\delta$ and $\omega$ are nonzero \cite{adh}. We need the following lemma, which is from \cite{adh}. 

\begin{lemma}(\!\!\cite{adh})\label{le_sk}
Consider a system of  $N$ subwavelength resonators $D_1,\dots,D_N$ in $\bR^3$. Then

(\romannumeral 1) the Laplace single layer potential $\S_D: L^2(\pD)\to H^1(\pD)$ is invertible;

(\romannumeral 2) $\ker(-\frac{1}{2}I+\cK_D^{*})=\mathrm{span}\{\psi_1,\dots,\psi_N\}$, where $\psi_l:=\S_D^{-1}[\chi_{\pD_l}]$, and  $\chi_{\pD_l}$ denotes the characteristic function of $\pD_l$, for $l=1,\dots,N$.
\end{lemma}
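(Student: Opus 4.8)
The statement to prove is Lemma~\ref{le_sk}, which asserts (i) invertibility of the Laplace single layer potential $\S_D\colon L^2(\pD)\to H^1(\pD)$ on a system of $N$ resonators in $\bR^3$, and (ii) that $\ker(-\tfrac12 I+\cK_D^*)$ is exactly the $N$-dimensional span of $\psi_l=\S_D^{-1}[\chi_{\pD_l}]$.

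\textbf{Proof proposal.} The plan is to reduce everything to classical facts about layer potentials for the Laplacian in $\bR^3$, which is why the excerpt simply attributes this to \cite{adh}; here I would assemble the argument from standard pieces. For part (i), I would first recall that in three dimensions the fundamental solution $G_0(x,y)=-\tfrac{1}{4\pi|x-y|}$ decays at infinity, so $\S_D$ does not suffer from the logarithmic obstruction present in $\bR^2$; consequently $\S_D\colon L^2(\pD)\to H^1(\pD)$ is a bounded operator that is Fredholm of index zero (it differs from a coercive operator by a compact one, via the jump relations and Rellich). Thus injectivity implies invertibility. To get injectivity: if $\S_D[\phi]=0$ on $\pD$, then $u=\S_D[\phi]$ is harmonic in $D$ with zero boundary data, so $u\equiv 0$ in $D$; and $u$ is harmonic in $\Omega=\bR^3\setminus\overline D$, vanishes on $\pD$, and decays like $O(|x|^{-1})$ at infinity, so by uniqueness for the exterior Dirichlet problem $u\equiv 0$ in $\Omega$ as well. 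The jump relation $\partial_\nu u|_+-\partial_\nu u|_-=\phi$ (which follows from $\partial_\nu\S_D[\phi]|_\pm=(\pm\tfrac12 I+\cK_D^*)[\phi]$, as already quoted in the proof of the previous lemma) then forces $\phi=0$. This shows $\S_D$ is injective, hence bijective, and open mapping gives a bounded inverse.

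For part (ii), the inclusion $\mathrm{span}\{\psi_1,\dots,\psi_N\}\subseteq\ker(-\tfrac12 I+\cK_D^*)$ is the easy direction: by definition $\S_D[\psi_l]=\chi_{\pD_l}$, and $\S_D[\chi_{\pD_l}$-data$]$ extends to the function $v_l$ that equals $1$ on the connected component $D_l$ and is the decaying harmonic extension (the potential $v_i$ of \eqref{eq_vi}) outside; since $v_l$ is constant on each component of $D$ its interior normal derivative vanishes on $\pD$, so $\partial_\nu\S_D[\psi_l]|_-=(-\tfrac12 I+\cK_D^*)[\psi_l]=0$. Linear independence of the $\psi_l$ follows because $\S_D$ is injective and the $\chi_{\pD_l}$ are linearly independent (the $\pD_l$ are disjoint). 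For the reverse inclusion, suppose $\phi\in\ker(-\tfrac12 I+\cK_D^*)$. Then $w:=\S_D[\phi]$ satisfies $\partial_\nu w|_-=0$ on $\pD$, so $w$ is harmonic in $D$ with vanishing Neumann data, hence constant on each component $D_l$, say $w\equiv c_l$ on $D_l$. Therefore $\S_D[\phi]=\sum_l c_l\chi_{\pD_l}=\S_D[\sum_l c_l\psi_l]$ on $\pD$, and injectivity of $\S_D$ from part (i) yields $\phi=\sum_l c_l\psi_l$. This closes the equality of the two spaces and simultaneously shows $\dim\ker(-\tfrac12 I+\cK_D^*)=N$.

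\textbf{Main obstacle.} The conceptually routine but technically delicate point is the Fredholm/invertibility statement for $\S_D$ as a map $L^2(\pD)\to H^1(\pD)$ at the stated regularity: one needs $\pD$ regular enough (here the $D_l$ are balls, so this is unproblematic) and the correct mapping properties and jump relations on $L^2$-based spaces, together with the $H^{-1/2}$--$H^{1/2}$ ellipticity of $\S_D$ transferred to the $L^2$ scale. I would handle this by citing the standard layer-potential theory (e.g.\ the references already invoked for the jump relations), so that the only genuinely ``new'' work is the two uniqueness arguments above and the identification of the kernel via the constants $c_l$. Everything else is bookkeeping; no step should require a hard estimate, since $\epsilon$ and $\delta$ do not enter this lemma at all.
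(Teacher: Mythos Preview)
Your proof is correct and follows the standard classical argument for these layer-potential facts. Note, however, that the paper does not supply its own proof of this lemma: it is quoted directly from \cite{adh} (as the ``(\!\!\cite{adh})'' tag in the statement indicates) and used as a black box. What you have written is essentially the argument one would find in the cited reference or in standard potential-theory texts: Fredholmness plus interior/exterior Dirichlet uniqueness and the jump relation for (i), and the Neumann-constant identification on each component for (ii). There is nothing to compare against in this paper, and no gap in your reasoning.
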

\begin{definition}(\!\!\cite{adh})
For a system of $N$ resonators $D_1,\dots,D_N$ in $\bR^3$, we define the capacitance matrix $\mathbb{C}=(C_{ij})_{N\times N}$ as the square matrix given by 
\begin{equation*}
C_{ij}=-\int_{\partial D_i}\psi_j\rd\sigma,\quad i,j=1,\dots,N,
\end{equation*}
and the generalized capacitance matrix $\tilde{\mathbb{C}}=(\tilde{C}_{ij})_{N\times N}$ with
$$\tilde{C}_{ij}=\frac{\delta v_b^2}{|D_i|}C_{ij}.$$
We denote the eigenvalues of matrix $\mathbb{C}$ and $\tilde{\mathbb{C}}$ as $\lambda_i$ and $\tilde{\lambda}_i$ respectively, where $i=1,\dots,N$.
\end{definition}

Let $v_i$ be defined in \eqref{eq_vi}, then we have $v_i=\mathcal{S}_D[\psi_i]$. Therefore, we can express the capacitance entries 
\begin{equation*}
    C_{ij}=-\int_{\partial D_i}\frac{\partial v_j}{\partial\nu}\big|_+\rd\sigma=\int_{\Omega}\nabla v_{i}\cdot\nabla v_{j}dx.
\end{equation*}
The following proposition describes the symmetry and positivity properties of the capacitance matrix.

\begin{prop}(\!\!\cite{fa})\label{prop_C}
The capacitance matrix $\mathbb{C}$ satisfies the following properties:

(\romannumeral 1) $C_{ij}=C_{ji}$ for any $1\le i,j\le N$, and $\mathbb{C}$ is positive definite;

(\romannumeral 2) $C_{ii}>0$ for any $1\le i\le N$, while $C_{ij}<0$ for any $1\le i,j\le N$, $i\neq j$;

(\romannumeral 3) $C$ is diagonally dominant:
$C_{ii}>\sum_{j\neq i}|C_{ij}|$, for any $1\le i\le N.$
\end{prop}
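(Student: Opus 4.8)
The plan is to establish all three properties directly from the variational characterization $C_{ij}=\int_\Omega\nabla v_i\cdot\nabla v_j\,\rd x$ together with the maximum principle for harmonic functions. For (i), symmetry is immediate from the symmetry of the bilinear form $\int_\Omega\nabla v_i\cdot\nabla v_j$; for positive-definiteness, given $\xi=(\xi_1,\dots,\xi_N)^T\neq 0$ one computes $\xi^T\mathbb{C}\xi=\int_\Omega|\nabla(\sum_i\xi_i v_i)|^2\,\rd x\geq 0$, and this vanishes only if $\sum_i\xi_i v_i$ is constant on $\Omega$, hence (by the decay at infinity) identically zero, forcing all $\xi_i=0$ by evaluating on $\partial D_l$. (Here one uses that $v:=\sum_i\xi_i v_i$ solves \eqref{eq_vi}-type boundary data and is the harmonic extension with $v=\xi_l$ on $\partial D_l$.)

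For (ii) and the off-diagonal sign, the key tool is the maximum principle: each $v_j$ is harmonic in $\Omega$, equals $1$ on $\partial D_j$ and $0$ on the remaining $\partial D_i$, and decays at infinity, so $0< v_j<1$ in $\Omega$ by the strong maximum principle. Then on $\partial D_i$ with $i\neq j$, the function $v_j$ attains its minimum value $0$, so $\partial v_j/\partial\nu|_+\le 0$ there (inward increase), and in fact $<0$ strictly by Hopf's lemma, whence $C_{ij}=-\int_{\partial D_i}\partial v_j/\partial\nu|_+\,\rd\sigma<0$. For $C_{ii}>0$ one uses $C_{ii}=\int_\Omega|\nabla v_i|^2\,\rd x>0$ since $v_i$ is nonconstant.

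For (iii), the clean way is to introduce the function $w:=\sum_{j=1}^N v_j$, which is harmonic in $\Omega$, equals $1$ on all of $\partial D$, and decays at infinity; by uniqueness $w$ is exactly the capacity potential of $D$, and in particular $0<w<1$ in $\Omega$. Fix $i$. On $\partial D_i$ we have $w=1=v_i$, so $w-v_i=\sum_{j\neq i}v_j$ vanishes on $\partial D_i$ and is nonnegative in $\Omega$; hence $\partial(w-v_i)/\partial\nu|_+\le 0$ on $\partial D_i$, i.e. $\sum_{j\neq i}\partial v_j/\partial\nu|_+\le 0$ there, which gives $\sum_{j\neq i}C_{ij}\ge ... $ — wait, this yields $-\sum_{j\neq i}C_{ij}=\int_{\partial D_i}\sum_{j\neq i}\partial v_j/\partial\nu|_+\,\rd\sigma\le 0$, i.e. $\sum_{j\neq i}C_{ij}\ge 0$, the wrong direction. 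The correct route: observe that on $\partial D_i$, the outward (from $\Omega$, i.e. into $D_i$) normal derivative of $w$ satisfies, since $w$ is the equilibrium potential and $w<1$ in $\Omega$ away from $\partial D$, that $\partial w/\partial\nu|_+<0$; but more is true — compare $w$ with $v_i$ on the component $\partial D_i$: since $0\le w-v_i$ in $\Omega$ and $=0$ on $\partial D_i$, Hopf gives $\partial_\nu(w-v_i)|_+<0$, meaning the flux of $w$ through $\partial D_i$ is strictly larger in magnitude (more negative) than that of $v_i$; translating, $C_{ii}=-\int_{\partial D_i}\partial_\nu v_i|_+ > -\int_{\partial D_i}\partial_\nu w|_+ = \sum_{j=1}^N C_{ij} + \big(\text{correction}\big)$, and since $\sum_{j\neq i}C_{ij}=-\sum_{j\neq i}|C_{ij}|$ by (ii), this rearranges to $C_{ii}>\sum_{j\neq i}|C_{ij}|$. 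I would present (iii) by making this flux comparison precise: write $\sum_{j=1}^N C_{ij}=-\int_{\partial D_i}\partial_\nu w|_+\,\rd\sigma$, show this quantity is $>0$ (it is a diagonal-type entry of the single-resonator-vs-cluster capacitance and equals $\int_\Omega\nabla v_i\cdot\nabla w>0$ since $v_i,w>0$ are nonconstant with $\int_\Omega|\nabla w|^2,\int_\Omega|\nabla v_i|^2>0$ and a Cauchy–Schwarz-type positivity argument, or simply because $w-v_i\ge 0$ is nonconstant so $\int_\Omega\nabla v_i\cdot\nabla(w-v_i)=\int_{\partial D_i}(\cdots)$ has a sign), and then combine with $C_{ij}<0$ for $j\neq i$ to conclude.

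The main obstacle is getting the direction of the inequality in (iii) right: the naive comparison $w-v_i\ge 0$ controls the flux of the \emph{sum} $\sum_{j\neq i}v_j$ through $\partial D_i$ with the wrong sign, so the argument must instead be organized around showing $\sum_{j=1}^N C_{ij}=-\int_{\partial D_i}\partial_\nu w|_+\,\rd\sigma>0$ — equivalently that the total flux of the equilibrium potential through the $i$-th component is strictly positive — which follows from $w$ being positive, nonconstant and harmonic in $\Omega$ with $w\equiv 1$ on $\partial D$, via $-\int_{\partial D}\partial_\nu w|_+ = \int_\Omega|\nabla w|^2>0$ distributed over components each with definite sign by Hopf's lemma. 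Everything else (symmetry, positive-definiteness, the off-diagonal signs, $C_{ii}>0$) is a routine application of the maximum principle and the energy identity. Since the statement is quoted from \cite{fa}, I would in fact simply cite it and sketch only the maximum-principle argument for completeness.
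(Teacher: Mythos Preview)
The paper does not prove this proposition at all: it is stated with a citation to \cite{fa} and left at that. Your outline is the standard maximum-principle argument and is essentially correct, but two points need cleaning up.

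First, in (ii) your sign bookkeeping is off. With the paper's convention, $\nu$ is the outward normal to $D$ (hence points \emph{into} $\Omega$), and $|_+$ denotes the limit from the $\Omega$ side. Since $v_j=0$ on $\partial D_i$ and $0<v_j<1$ in $\Omega$, moving in the direction $\nu$ increases $v_j$, so Hopf's lemma gives $\partial v_j/\partial\nu|_+>0$ on $\partial D_i$, not $\le 0$ as you wrote. The conclusion $C_{ij}=-\int_{\partial D_i}\partial_\nu v_j|_+\,\rd\sigma<0$ is then correct, but only with the sign fixed.

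Second, your discussion of (iii) wanders before reaching the right idea; the detour through comparing $w-v_i$ is unnecessary and, as you noticed, gives the inequality in the wrong direction. The direct route you eventually identify is the whole proof: set $w=\sum_{j=1}^N v_j$, the equilibrium potential of $D$, so that $w\equiv 1$ on $\partial D$ and $0<w<1$ in $\Omega$. On $\partial D_i$ the function $w$ attains its maximum, so Hopf's lemma gives $\partial w/\partial\nu|_+<0$ pointwise, hence
\[
\sum_{j=1}^N C_{ij}=-\int_{\partial D_i}\frac{\partial w}{\partial\nu}\Big|_+\rd\sigma>0.
\]
Combining with $C_{ij}<0$ for $j\neq i$ from (ii) yields $C_{ii}>-\sum_{j\neq i}C_{ij}=\sum_{j\neq i}|C_{ij}|$. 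There is no need for the global energy identity $\int_\Omega|\nabla w|^2$ or any ``distribution over components'' argument; Hopf on the single component $\partial D_i$ suffices.
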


\begin{lemma}(\!\!\cite{ady})\label{le_omega}
The subwavelength resonant frequencies of $N$ resonators $D_1,\dots,D_N$ are given by
$\omega_i=\sqrt{\tilde{\lambda}_i}+O(\delta),~i=1,\dots,N,~\mbox{as}~\delta\to0.$
\end{lemma}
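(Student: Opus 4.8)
The plan is to turn the homogeneous resonance condition $\cA(\omega,\delta)(\phi,\psi)^T=0$ into a finite-dimensional eigenvalue problem for $\tilde{\mathbb{C}}$ by a low-frequency perturbation argument. First I would use the expansions \eqref{expan_s} and \eqref{expan_k}, writing $\S_D^k=\S_D+k\S_{D,1}+O(k^2)$ and $\cK_D^{k,*}=\cK_D^*+O(k^2)$ (note $\cK_{D,1}=0$ since $G_1$ is constant), with $k=\omega/v$ and $k_b=\omega/v_b$ both tending to $0$. At $(\omega,\delta)=(0,0)$ the first equation gives $\S_D[\phi]=\S_D[\psi]$, hence $\phi=\psi$ by invertibility of $\S_D$ (Lemma \ref{le_sk}), while the second gives $(-\tfrac12\cI+\cK_D^*)[\phi]=0$, so the leading part $\phi_0$ lies in the $N$-dimensional kernel $\mathrm{span}\{\psi_1,\dots,\psi_N\}$; write $\phi_0=\psi_0=\sum_{l}a_l\psi_l$. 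The resonance condition at the next order will fix the coefficient vector $a=(a_l)$ and the frequency.

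The key step is an exact integral identity for the second row of $\cA$. Since $u=\S_D^{k_b}[\phi]$ solves $\Delta u+k_b^2u=0$ in each $D_i$ and $\partial u/\partial\nu|_-=(-\tfrac12\cI+\cK_D^{k_b,*})[\phi]$, the divergence theorem over $D_i$ gives
$$\int_{\partial D_i}\left(-\tfrac12\cI+\cK_D^{k_b,*}\right)[\phi]\,\rd\sigma=\int_{D_i}\Delta u\,\rd x=-k_b^2\int_{D_i}\S_D^{k_b}[\phi]\,\rd x.$$
Integrating the second equation $(-\tfrac12\cI+\cK_D^{k_b,*})[\phi]=\delta(\tfrac12\cI+\cK_D^{k,*})[\psi]$ over $\partial D_i$ and inserting this identity converts the left-hand side into a volume term carrying the factor $k_b^2=\omega^2/v_b^2$, which is exactly how $\omega^2$ enters.

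Next I would evaluate both sides at leading order. On the left, $\S_D^{k_b}[\phi]\to\S_D[\phi_0]$; since $\S_D[\psi_l]$ is harmonic in $D$ with boundary data $\chi_{\partial D_l}$, it equals $\delta_{il}$ on $D_i$, so $\int_{D_i}\S_D[\phi_0]\,\rd x=a_i|D_i|$ and the left-hand side is $-(\omega^2/v_b^2)a_i|D_i|+\text{h.o.t.}$ On the right, $(\tfrac12\cI+\cK_D^*)[\psi_l]=\psi_l$ (because $\psi_l\in\ker(-\tfrac12\cI+\cK_D^*)$ forces the interior conormal derivative to vanish, so the jump relation for the single-layer normal derivative returns $\psi_l$), whence $\int_{\partial D_i}(\tfrac12\cI+\cK_D^*)[\psi_l]\,\rd\sigma=\int_{\partial D_i}\psi_l\,\rd\sigma=-C_{il}$. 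Equating the two sides yields $\omega^2 a_i=\frac{\delta v_b^2}{|D_i|}\sum_l C_{il}a_l=(\tilde{\mathbb{C}}a)_i$, so the leading-order resonance condition is exactly $\tilde{\mathbb{C}}a=\omega^2 a$, and reading off eigenpairs gives $\omega^2=\tilde\lambda_i$ at leading order.

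The main obstacle is promoting this formal identity to the claimed error bound and existence count. One must control the corrections $\phi-\phi_0$, $\psi-\psi_0$ (fixed at the next orders by the first equation, where $\S_{D,1}$ contributes a constant mode) and the $O(k^2)$, $O(k^3)$ tails of the operator expansions; since each $\tilde\lambda_i=O(\delta)$, one shows the correction to $\omega^2$ is $O(\delta^{3/2})$, which translates into the stated error via $\sqrt{\tilde\lambda_i+O(\delta^{3/2})}=\sqrt{\tilde\lambda_i}+O(\delta)$, yielding $\omega_i=\sqrt{\tilde\lambda_i}+O(\delta)$. To make existence and the count of exactly $N$ subwavelength frequencies rigorous, I would invoke a generalized Rouché (Gohberg–Sigal) argument: the characteristic values of the analytic family $\omega\mapsto\cA(\omega,\delta)$ in a small disc about $\omega=0$ are in bijection with the eigenvalues of the finite reduced problem, their perturbation being controlled by resolvent bounds coming from Lemma \ref{le_sk}. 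Verifying these uniform bounds, rather than the algebraic reduction itself, is where the real work lies.
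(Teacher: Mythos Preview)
Your proposal is correct and follows essentially the same route as the paper's proof: expand $\S_D^k$ and $\cK_D^{k,*}$ about $\omega=0$, recognize at leading order that $\phi=\psi$ and $\phi_0\in\mathrm{span}\{\psi_1,\dots,\psi_N\}$, then integrate the second equation over each $\partial D_i$ to obtain $\tilde{\mathbb{C}}a=\omega^2 a$ at leading order with error $O(\delta^{3/2})$. Your divergence-theorem identity $\int_{\partial D_i}(-\tfrac12\cI+\cK_D^{k_b,*})[\phi]\,\rd\sigma=-k_b^2\int_{D_i}\S_D^{k_b}[\phi]\,\rd x$ is simply the unexpanded form of the paper's identity $\int_{\partial D_i}\cK_{D,2}[\varphi]\,\rd\sigma=-\int_{D_i}\S_D[\varphi]\,\rd x$, and your observation $(\tfrac12\cI+\cK_D^*)[\psi_l]=\psi_l$ is a slight sharpening of the paper's integrated version $\int_{\partial D_i}(\tfrac12\cI+\cK_D^*)[\varphi]\,\rd\sigma=\int_{\partial D_i}\varphi\,\rd\sigma$; both lead to the same computation.
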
 

\begin{proof}
Suppose that $(\phi,\psi)$ is a solution to \eqref{eq_phipsi} for small $\omega(\delta)$. From the expansion \eqref{expan_s} and \eqref{expan_k}, we obtain
\begin{align}
\S_D[\phi-\psi]+k_b\S_{D,1}[\phi]-k\S_{D,1}[\psi]&=O(\omega^2),\label{eq_1}\\
\Big(-\frac{1}{2}I+\cK_D^*+k_b^2\cK_{D,2}\Big)[\phi]-\delta\Big(\frac{1}{2}I+\cK_D^*\Big)[\psi]&=O(\delta\omega+\omega^3).\label{eq_2}
\end{align}
From \eqref{eq_1} and Lemma \ref{le_sk} (\romannumeral 1), we have $\phi=\psi+O(\omega)$. By virtue of \eqref{eq_2}, we obtain $(-\frac{1}{2}I+\cK_D^*)[\phi]=O(\omega^2+\delta)$. It follows from Lemma \ref{le_sk} (\romannumeral 2) that 
$\phi=\sum_{j=1}^N\alpha_j\psi_j+O(\omega^2+\delta),$
for constants $\alpha_{j}=O(1)$, $j=1,\dots,N$. This concludes that
\begin{equation}\label{comp_psi}
\psi=\sum_{j=1}^N\alpha_j\psi_j+O(\omega+\delta).
\end{equation}

For any $\varphi\in L^2(\pD)$, we have
\begin{equation}\label{eq_prop}
\begin{aligned}
\int_{\pD_i}\Big(-\frac{1}{2}I+\cK_D^*\Big)[\varphi]\rd\sigma=0,\quad& \int_{\pD_i}\Big(\frac{1}{2}I+\cK_D^*\Big)[\varphi]\rd\sigma=\int_{\pD_i}\varphi\rd\sigma,\\
\int_{\pD_i}\cK_{D,2}[\varphi]\rd\sigma&=-\int_{D_i}\S_D[\varphi]\rd x,
\end{aligned}
\end{equation}
where $i=1,\dots,N$. Integrating \eqref{eq_2} over $\pD_i$ and using \eqref{eq_prop} yields
\begin{equation}\label{eq_3}
-k_b^2\int_{D_i}\S_D[\psi]\rd\sigma-\delta\int_{\pD_i}\psi\rd\sigma=O(\delta\omega+\omega^3).
\end{equation}
Substituting \eqref{comp_psi} into \eqref{eq_3} leads to
\begin{equation*}
-|D_i|k_b^2\alpha_{i}-\sum_{j=1}^N\delta\alpha_j\int_{\pD_i}\psi_j\rd\sigma=O(\delta^2+\delta\omega+\omega^3),\quad i=1,\dots ,N.
\end{equation*}
By employing $k_b=\omega/v_b$, 
\begin{equation}\label{z}
\sum_{j=1}^N\frac{\delta v_b^2}{|D_i|}C_{ij}\alpha_j=\omega^2\alpha_{i}+O(\delta^2+\delta\omega+\omega^3),\quad i=1,\dots ,N.
\end{equation}

Recalling the definition of $\tilde{\mathbb{C}}$, up to an error of order $O(\delta^2+\delta\omega+\omega^3)$, the equation system \eqref{z} is equivalent to the eigenvalue problem 
$\tilde{\mathbb{C}}\boldsymbol{\alpha}=\omega^2\boldsymbol{\alpha},$
where $\boldsymbol{\alpha}=(\alpha_1,\dots,\alpha_n)^{\mathrm{T}}$. So $\omega_i=\sqrt{\tilde{\lambda}_i}+O(\delta)$, $i=1,\dots,N$.
\end{proof}
\begin{remark}
We emphasize that the resonant frequencies $\omega_i$ are not purely real-valued and their imaginary components are included in the $O(\delta)$ term. For resonators in an unbounded domain, energy radiates into the far field, resulting in resonant frequencies with negative imaginary parts. Specifically, as demonstrated in \cite[Theorem 2.22]{adh}, the following asymptotic expansion holds: \begin{align}\label{eq_ima}
\omega_i=\sqrt{\tilde{\lambda
}_i}-\ri\delta\frac{v_b^2}{8\pi v}\frac{(\boldsymbol{\alpha}^i)^\mathrm{T}\mathbb{C}\mathbb{J}\mathbb{C}\boldsymbol{\alpha}^i}{\|\boldsymbol{\alpha}^i\|_D^2}+O(\delta^{3/2}),
\end{align}
where $\mathbb{J}$ denotes the $N\times N$ all-ones matrix, $\boldsymbol{\alpha}^i$ is the eigenvector associated to $\lambda_i$ and the norm $\|x\|_D:=(\sum_{l=1}^N|D_l|x_l^2)^{1/2}$ is used. While this paper primarily focuses on analyzing the distribution of the real parts of the resonant frequencies, we provide explicit calculations for the imaginary parts in a special case (see Remark \ref{Re_ima}). Furthermore, as noted in \cite{ady}, singularities in the leading-order terms of the asymptotic expansions \eqref{expan_s} and \eqref{expan_k} arise exclusively when the resonators are closely spaced. Higher-order expansions lie beyond the scope of this study.
\end{remark}

\section{The chain arrangement}\label{sec_chain}

In this section, we investigate the resonant frequency and its resonant mode for the chain arrangement, as depicted in Figure \ref{ca}. We will introduce a simplified matrix to fully characterize the eigenvalues and eigenvectors of the capacitance matrix $\mathbb{C}$. According to Lemma \ref{le_omega}, the subwavelength resonant frequencies $\omega_i$ are determined totally by the eigenvalues $\tilde{\lambda}_i$ of the generalized capacitance matrix $\tilde{\mathbb{C}}$. Furthermore, we will also establish the characterization of the scattered solution.

\subsection{Resonant frequency}

Let $\mathbb{I}$ denote the $N\times N$ identity matrix. Under the assumptions on $D$, the resonator volumes satisfy  $|D_1|=\dots=|D_N|=4\pi R^3/3$. Since
\begin{equation}\label{tillam}
\det (\tilde{\mathbb{C}}-\tilde{\lambda}\mathbb{I})=\Big(\frac{\delta v_b^2}{|D_1|}\Big)^N\det\Big(\mathbb{C}-\frac{|D_1|}{\delta v_b^2}\tilde{\lambda}\mathbb{I}\Big),\quad\mbox{and}~ 
\tilde{\lambda}_i=\frac{3\delta v_b^2}{4\pi R^3}\lambda_i, \quad i=1,\dots,N,
\end{equation}
it suffices to calculate the eigenvalues $\lambda_i$ of matrix $\mathbb{C}$. Based on the characteristics of the chain arrangement, we derive the following properties of the capacitance matrix $\mathbb{C}$.

\begin{prop}\label{prop_C_train}
For the capacitance matrix $\mathbb{C}$ of chain arrangement, there exist some constants $M_{ij}$ independent of $\epsilon$ such that

(\romannumeral 1) $C_{11}=C_{NN}=\pi|\log\epsilon|+M_{11}+o(1),$ $C_{ii}=2\pi|\log\epsilon|+M_{ii}+o(1)$, for $2\le i\le N-1$;

(\romannumeral 2) $C_{i,i+1}=C_{i+1,i}=-\pi|\log\epsilon|+M_{i,i+1}+o(1)$, for $1\le i\le N-1$;

(\romannumeral 3)  $C_{ij}=O(1)$, if $|i-j|\ge 2$; and  $\frac{1}{N}\big(\sum_{|i-j|\le1}M_{ij}+\sum_{|i-j|\ge2}C_{ij}\big)=M+o(1)$, where $M$ is given by \eqref{def_M}.
\end{prop}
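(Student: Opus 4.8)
\emph{Proof strategy.} The plan is to estimate $C_{ij}=\int_{\Omega}\nabla v_i\cdot\nabla v_j\,dx$ by decomposing $\Omega$ into the $N-1$ narrow necks between consecutive balls and a complementary regular region. Fix $r_0\in(0,R/4)$, put $P_l:=\tfrac{x_l+x_{l+1}}{2}$, let $\mathcal{N}_l:=B_{r_0}(P_l)\cap\Omega=\Omega^{r_0}_{l,l+1}$ for $1\le l\le N-1$ (as in \eqref{def_omgl}, with the convention $\mathcal{N}_0,\mathcal{N}_N$ absent), and $\Omega_0:=\Omega\setminus\bigcup_l\mathcal{N}_l$. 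In local coordinates $(x_1,x')$ on $\mathcal{N}_l$ ($x_1$ along the axis through the two closest points, $x'\in\bR^2$), the gap between the two caps is the parabolic profile $w_l(x')=\epsilon+\abs{x'}^2/R+O(\abs{x'}^4)$. Two facts organize everything. First, on $\Omega_0$ all $\nabla v_i$ are bounded uniformly in $\epsilon$, and by continuity of the exterior Dirichlet problem under the domain perturbation $\epsilon\to0$ away from the limiting contact points, $v_i\to v_i^{0}$ in $H^1(\Omega_0)$ (with $v_i^0$ the corresponding function for the touching chain); hence $\int_{\Omega_0}\nabla v_i\cdot\nabla v_j\,dx$ converges to a constant as $\epsilon\to0$. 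Second, the trace of $v_i$ on the two caps bounding $\mathcal{N}_l$ is constant ($0$ or $1$), and is equal on the two caps unless $i\in\{l,l+1\}$; consequently only $v_l$ and $v_{l+1}$ have gradients that blow up in $\mathcal{N}_l$, while every other $v_j$ satisfies $\int_{\mathcal{N}_l}\abs{\nabla v_j}^2\,dx=O(1)$ (a standard estimate for a harmonic function vanishing on two close‑to‑touching boundary pieces, cf. \cite{lz}); and in $\mathcal{N}_l$ each of $v_l,v_{l+1}$ is, to leading order, the affine‑in‑$x_1$ interpolant $\ell_i$ of its cap values.

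Using these I would record three neck estimates, the comparison‑function technique being that of \cite{ady,lz} for close‑to‑touching spheres. (a) For $i\in\{l,l+1\}$, a direct integration of $\abs{\nabla\ell_i}^2$ over the parabolic neck gives $\int_{\mathcal{N}_l}\abs{\nabla v_i}^2\,dx=\pi|\log\epsilon|+O(1)$, the $O(1)$ part being convergent once the corrector $v_i-\ell_i$ is controlled in $H^1(\mathcal{N}_l)$ and matched with $\Omega_0$. (b) The key cancellation: if $g$ vanishes on both caps bounding $\mathcal{N}_l$, then integrating the axial part of $\nabla v_i\cdot\nabla g$ in $x_1$ first and using that $\partial_{x_1}\ell_i=\pm w_l(x')^{-1}$ is independent of $x_1$ while $\int\partial_{x_1}g\,dx_1=[g]_{\mathrm{caps}}=0$, the $\abs{\log\epsilon}$‑divergent contribution disappears, so $\int_{\mathcal{N}_l}\nabla v_i\cdot\nabla g\,dx=O(1)$. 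Taking $g=v_l+v_{l+1}-1$ (harmonic in $\mathcal{N}_l$, zero on both caps) yields $\int_{\mathcal{N}_l}\nabla v_l\cdot\nabla v_{l+1}\,dx=-\int_{\mathcal{N}_l}\abs{\nabla v_l}^2\,dx+O(1)=-\pi|\log\epsilon|+O(1)$; taking $g=v_j$ with $j\notin\{l,l+1\}$ yields $\int_{\mathcal{N}_l}\nabla v_i\cdot\nabla v_j\,dx=O(1)$ whenever at most one of $i,j$ lies in $\{l,l+1\}$.

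Assembling, $C_{ij}=\int_{\Omega_0}\nabla v_i\cdot\nabla v_j\,dx+\sum_{l=1}^{N-1}\int_{\mathcal{N}_l}\nabla v_i\cdot\nabla v_j\,dx$ (finitely many terms, $N$ fixed). The necks feeding a singular part into $v_i$ are $\mathcal{N}_{i-1}$ and $\mathcal{N}_i$: an interior index $2\le i\le N-1$ has both, so (a) gives $C_{ii}=2\pi|\log\epsilon|+M_{ii}+o(1)$, while $i=1,N$ has only one, giving $C_{11}=C_{NN}=\pi|\log\epsilon|+M_{11}+o(1)$ — this is (i). For $C_{i,i+1}$ only $\mathcal{N}_i$ carries a singular part, and (b) gives $C_{i,i+1}=-\pi|\log\epsilon|+M_{i,i+1}+o(1)$ — this is (ii); here each $M_{ij}$ is the sum of the convergent outer contribution and the convergent neck remainders. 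If $\abs{i-j}\ge2$ no neck is adjacent to both $D_i$ and $D_j$, so by (b) every neck term is $O(1)$ and $C_{ij}=O(1)$. For the averaging identity, note that $\sum_j v_j$ is the equilibrium potential of $D$ at potential $1$, so $\mathrm{Cap}(D)=\int_\Omega\abs{\nabla\textstyle\sum_j v_j}^2\,dx=\sum_{i,j=1}^{N}C_{ij}$, which by \eqref{def_M} equals $NM$. The matrix of leading coefficients is $\pi|\log\epsilon|$ times the graph Laplacian of the path on $N$ vertices, all of whose rows sum to $0$, so the $\abs{\log\epsilon}$ terms cancel in $\sum_{\abs{i-j}\le1}C_{ij}$, leaving $\sum_{\abs{i-j}\le1}C_{ij}=\sum_{\abs{i-j}\le1}M_{ij}+o(1)$. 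Therefore $NM=\sum_{\abs{i-j}\le1}M_{ij}+\sum_{\abs{i-j}\ge2}C_{ij}+o(1)$, which after dividing by $N$ is (iii). (As a sanity check, this leading structure reproduces Proposition \ref{prop_C}: positive diagonal, negative adjacent off‑diagonals, diagonal dominance.)

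The main obstacle is the neck analysis behind (a): pinning down the exact coefficient of $|\log\epsilon|$ — not merely the logarithmic order — and, more delicately, showing the $O(1)$ remainder converges to a constant $M_{ij}$ independent of $\epsilon$. This requires a comparison function finely adapted to the profile $w_l(x')=\epsilon+\abs{x'}^2/R+\cdots$, an $H^1(\mathcal{N}_l)$ estimate on the corrector, and checking that the artificial dependence on the cutoff $r_0$ cancels between $\mathcal{N}_l$ and $\Omega_0$; this is precisely where the machinery of \cite{ady,lz} for close‑to‑touching spheres and convex bodies enters. I note that for the spectral statements downstream only the weaker form with $O(1)$ in place of $M_{ij}+o(1)$ is actually needed, since an $O(1)$ perturbation of $\pi|\log\epsilon|$ times the path Laplacian — whose eigenvalues $a_N^i=2(1-\cos\tfrac{(i-1)\pi}{N})$ are simple and spread over a range $\sim|\log\epsilon|$ — is controlled by standard matrix perturbation theory, and $\sum_{i,j}C_{ij}=NM$ holds exactly.
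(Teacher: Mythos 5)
Your proposal is correct and rests on the same underlying strategy as the paper's proof: decompose $\Omega$ into the narrow necks $\Omega^r_{l,l+1}$ plus a regular remainder, use the close-to-touching asymptotics there, and obtain (iii) from Green's formula via $\sum_{i,j}C_{ij}=\mathrm{Cap}(D)=NM$ together with the fact that the $|\log\epsilon|$ coefficients form the path Laplacian and hence sum to zero. The difference is one of packaging: the paper simply cites \cite{lz} for the diagonal entries, \cite{lly} for the adjacent off-diagonal entries (computed there as a localized flux $-\int_{\bar\Omega^r_{i,i+1}\cap\partial D_i}\partial_\nu v_{i+1}$), and \cite{bly2} for the $|i-j|\ge2$ bound, whereas you reconstruct these estimates from the energy representation with explicit comparison functions. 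Your derivation of (ii) from (i) via the test function $g=v_l+v_{l+1}-1$ (harmonic in the neck, vanishing on both caps, so that the axial cancellation kills the logarithmic divergence of the cross term) is a clean, self-contained alternative to the flux computation of \cite{lly}, and the same cancellation lemma uniformly handles the $|i-j|\ge2$ case; this is arguably tidier than stitching together three separate citations. Your closing observation that only $O(1)$ control of the remainders (rather than convergence to fixed constants $M_{ij}$) is needed downstream is also accurate, since the exact identity $\sum_{i,j}C_{ij}=NM$ supplies Lemma \ref{le_omg1_chain} independently.

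One small caveat: with the gap profile $w_l(x')=\epsilon+|x'|^2/R+O(|x'|^4)$ that you (correctly) write down, the direct integration in your step (a) gives $\int_{\mathcal{N}_l}|\nabla \ell_i|^2\,dx=\pi R|\log\epsilon|+O(1)$, i.e.\ the coefficient carries a factor of $R$; the stated $\pi|\log\epsilon|$ matches only under the normalization $R=1$. This discrepancy is inherited from the paper itself (and is immaterial for the structure of the spectral results, where it would only rescale $\rho(\epsilon)$), but as written your computation and your stated conclusion in (a) are off from each other by that factor.
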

\begin{proof}Let $\Omega_{l,l+1}^{r}$ be defined in \eqref{def_omgl}.

(\romannumeral 1) Since $v_1$ satisfies the equation \eqref{eq_vi}, by virtue of \cite[Proposition 1.5]{lz}, we have
$$C_{11}=\int_{\Omega_{1,2}^r}|\nabla v_1|^2\mathrm{d}x+M_{11}+o(1)=\pi|\log\epsilon|+M_{11}+o(1).$$
By symmetry, we have $C_{NN}=C_{11}$. For $C_{ii}$ with $2\le i\le N-1$, we obtain
$$C_{ii}=\int_{\Omega_{i-1,i}^r}|\nabla v_i|^2\mathrm{d}x+\int_{\Omega_{i,i+1}^r}|\nabla v_i|^2\mathrm{d}x+M_{ii}+o(1)=2\pi|\log\epsilon|+M_{ii}+o(1).$$

(\romannumeral 2) Due to the symmetry of matrix $\mathbb{C}$, it is sufficient to calculate $C_{i,i+1}$. Referring to \cite{lly}, we have 
\begin{align*}
C_{i,i+1}=-\int_{\bar{\Omega}_{i,i+1}^r\cap \partial D_{i}}\!\!\frac{\partial v_{i+1}}{\partial\nu}\rd\sigma+M_{i,i+1}+o(1)
&=\int_{\bar{\Omega}_{i,i+1}^r\cap \partial D_{i+1}}\!\!\frac{\partial v_{i+1}}{\partial\nu}\rd\sigma+M_{i,i+1}+o(1)\\
&=-\pi|\log\epsilon|+M_{i,i+1}+o(1),
\end{align*}
for $1\le i\le N-1$. 

(\romannumeral 3) It follows from \cite[Lemma 2.5]{bly2} that $C_{ij}=O(1)$, if $|i-j|\ge 2$. By the definition of $v_i$ and using Green's formula, we have 
$$M=-\frac{1}{N}\sum_{i=1}^{N}\int_{\pD}\frac{\partial v_i}{\partial\nu}\Big|_+\rd\sigma=\frac{1}{N}\sum_{i,j=1}^NC_{ij}=\frac{1}{N}\big(\sum_{|i-j|\le1}M_{ij}+\sum_{|i-j|\ge2}C_{ij}\big)+o(1).$$
\end{proof}

We begin by determining the eigenvalues of magnitude  $O(|\log\epsilon|)$. For this purpose, define $$\rho(\epsilon):=\pi|\log\varepsilon|~\quad\mbox{and}~ \lambda:=a\rho(\epsilon).$$ Let $\mathbb{A}$ denote an $N\times N$ tridiagonal Toeplitz matrix with perturbations of magnitude $-1$ added to the diagonal entries at the top-left and bottom-right corners. Namely,
\begin{equation}\label{def_A}
\mathbb{A}=\left(\begin{array}{ccccc}
1 & -1 &  &  &  \\
-1 & 2 & \ddots &  &  \\
& \ddots & \ddots & \ddots &  \\
&  & \ddots & 2 & -1 \\
&  &  & -1 & 1
\end{array}\right).
\end{equation}
By Proposition \ref{prop_C_train}, we obtain
\begin{equation}\label{def_Fa}
F(a):=\det(\mathbb{C}-a\rho(\epsilon)\mathbb{I})=f_N(a)\rho(\epsilon)^{N}+f_{N-1}(a)\rho(\epsilon)^{N-1}+O(\rho(\epsilon)^{N-2}),
\end{equation}
where 
\begin{equation}\label{def_detA}
f_N(a)=\det(\mathbb{A}-a\mathbb{I})
\end{equation}
is a polynomial in $a$ of degree $N$ and $f_{N-1}(a)$ is a polynomial in $a$ of degree $N\!-\!1$. Our goal is to determine the roots of $F(a)$, denoted by $\tilde{a}_N^1,\dots,\tilde{a}_N^N$.

For $a\le0$, Proposition \ref{prop_C_train} implies that $\mathbb{C}-a\rho(\epsilon)\mathbb{I}$ is a diagonally dominant matrix, ensuring $F(a)=\det(\mathbb{C}-a\rho(\epsilon)\mathbb{I})>0$. For $a\ge 4$, the matrix $-(\mathbb{C}-a\rho(\epsilon)\mathbb{I})$ becomes diagonally dominant. In this case, $F(a)=(-1)^N\det(-\mathbb{C}+a\rho(\epsilon)\mathbb{I})$ yielding $F(a)>0$ if $N$ is even, and $F(a)<0$ if $N$ is odd. Consequently, we restrict our analysis to $0<a<4$ for the remainder of this section. Denote 
\begin{equation}\label{def_gn}
g_N(a):=\left|\begin{array}{ccccc}
2-a & -1 &  &  &  \\
-1 & 2-a & \ddots &  &  \\
& \ddots & \ddots & \ddots &  \\
&  & \ddots & 2-a & -1 \\
&  &  & -1 & 2-a
\end{array}\right|,
\end{equation}
then 
$$f_N(a)=g_N(a)-2g_{N-1}(a)+g_{N-2}(a).$$

\begin{lemma}
Let $f_N(a)$ be defined in \eqref{def_detA} for $a\in(0,4)$. Then
\begin{equation}\label{eq_fn}
f_N(a)=-\frac{a\sin N\theta}{\sin\theta},\quad\mbox{where}~\theta=\arccos \frac{2-a}{2}.
\end{equation}

\end{lemma}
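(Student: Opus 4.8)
The plan is to reduce everything to the classical closed form for the tridiagonal Toeplitz determinant $g_N(a)$ and then substitute into the decomposition $f_N(a)=g_N(a)-2g_{N-1}(a)+g_{N-2}(a)$ already recorded in the text. First I would expand $g_N(a)$ along its first row to obtain the three–term recurrence
\begin{equation*}
g_N(a)=(2-a)g_{N-1}(a)-g_{N-2}(a),\qquad N\ge 2,
\end{equation*}
with $g_0(a)=1$ and $g_1(a)=2-a$ (it is convenient to also set $g_{-1}(a)=0$ so that the recurrence and the decomposition of $f_N$ are valid down to the edge cases $N=2,3$).

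Next, since $a\in(0,4)$ we have $\tfrac{2-a}{2}\in(-1,1)$, so $\theta=\arccos\tfrac{2-a}{2}\in(0,\pi)$ is well defined and $\sin\theta>0$; in particular there is no division by zero in the asserted formula. Writing $2-a=2\cos\theta$, the recurrence above is exactly the Chebyshev recurrence, and one checks directly that
\begin{equation*}
g_N(a)=\frac{\sin\big((N+1)\theta\big)}{\sin\theta}
\end{equation*}
solves it with the correct initial data, because $\frac{\sin\theta}{\sin\theta}=1$ and $\frac{\sin 2\theta}{\sin\theta}=2\cos\theta=2-a$. (This is the Chebyshev polynomial of the second kind $U_N(\cos\theta)$; an induction on $N$ using $\sin\big((N+1)\theta\big)+\sin\big((N-1)\theta\big)=2\cos\theta\,\sin(N\theta)$ makes it rigorous.)

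Finally I substitute this expression into $f_N(a)=g_N(a)-2g_{N-1}(a)+g_{N-2}(a)$, obtaining
\begin{equation*}
f_N(a)=\frac{\sin\big((N+1)\theta\big)-2\sin(N\theta)+\sin\big((N-1)\theta\big)}{\sin\theta}.
\end{equation*}
Applying the sum–to–product identity $\sin\big((N+1)\theta\big)+\sin\big((N-1)\theta\big)=2\sin(N\theta)\cos\theta$, the numerator becomes $2\sin(N\theta)(\cos\theta-1)$; and since $\cos\theta-1=\tfrac{2-a}{2}-1=-\tfrac{a}{2}$, this equals $-a\sin(N\theta)$. Hence $f_N(a)=-\dfrac{a\sin N\theta}{\sin\theta}$, which is \eqref{eq_fn}.

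\textbf{Main obstacle.} There is no real analytic difficulty here once the Chebyshev closed form is in hand; the only points requiring care are bookkeeping ones: verifying that the corner-perturbed determinant $\det(\mathbb{A}-a\mathbb{I})$ genuinely decomposes as $g_N-2g_{N-1}+g_{N-2}$ (this is the cofactor expansion provided just before the lemma, where the two $-1$ corner perturbations each contribute a $-g_{N-1}$ term after expanding the border), and checking that the closed form $g_N(a)=\sin((N{+}1)\theta)/\sin\theta$ has the right initial conditions so that the identity holds for the small values $N=2,3$ as well as in general.
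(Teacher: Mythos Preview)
Your proof is correct and follows exactly the same approach as the paper: derive the Chebyshev closed form $g_N(a)=\sin((N{+}1)\theta)/\sin\theta$ from the three-term recurrence, substitute into $f_N=g_N-2g_{N-1}+g_{N-2}$, and simplify via $\sin((N{+}1)\theta)+\sin((N{-}1)\theta)=2\cos\theta\sin(N\theta)$. The only difference is that you spell out the initial conditions and the well-definedness of $\theta$ more carefully than the paper does.
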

\begin{proof}
Let $\cos\theta=(2-a)/2$, $a\in(0,4)$. Then $g_N(a)-2\cos\theta g_{N-1}(a)+g_{N-2}(a)=0$. By recursion, we obtain 
$g_N(a)=\frac{\sin (N+1)\theta}{\sin\theta}.$
Consequently,
\begin{align*}
f_N(a)=\frac{\sin(N+1)\theta-2\sin N\theta+\sin(N-1)\theta}{\sin\theta}=\frac{(2\cos\theta-2)\sin N\theta}{\sin\theta}=-\frac{a\sin N\theta}{\sin\theta}.
\end{align*}
\end{proof}
\begin{remark}
Since
$$\sin N\theta=\sum_{k=0}^{\alpha(N)-1}\frac{(-1)^k N!}{(2k+1)!(n-2k-1)!}\cos^{N-2k-1}\theta\sin^{2k+1}\theta,\quad\mbox{for}~N\geq2,$$
where $\alpha(N)$ is defined in \eqref{def_alpN}. From \eqref{eq_fn}, we rewrite $f_N(a)$ into polynomial form
\begin{equation}\label{eq_fn_pol}
f_N(a)=-\frac{a}{2^{N-1}}\sum_{k=0}^{\alpha(N)-1}\frac{(-1)^k N!}{(2k+1)!(n-2k-1)!}a^k(4-a)^k(2-a)^{N-2k-1}.
\end{equation}
From \eqref{eq_fn_pol}, it is easy to see that the coefficient of $a^N$ is $(-1)^N$.
\end{remark}
Let $f_N(a)=0$. By $\sin N\theta=0$, we have $\theta=(i-1)\pi/N$, $i=2,\dots,N$. Consequently, $a=2\big(1-\cos((i-1)\pi/N)\big)$. If we denote $\theta_i$ and $a_N^i$ as in \eqref{def_anin}, and let $a_N^1=0$, then combining this with \eqref{eq_fn_pol}, we derive the following equation:
\begin{equation}\label{fn_de}
f_N(a)=(-1)^N(a-a_N^1)(a-a_N^2)\cdots(a-a_N^N),\quad a\in(0,4).
\end{equation}
Due to our determination of the first coefficient and $N$ roots of $f_N(a)$, \eqref{fn_de} holds for all $a\in\bR$. Denote 
\begin{equation}\label{def_fj}
f^i(a):=\frac{f_N(a)}{a-a_N^i},\quad 1\le i\le N.
\end{equation}
\begin{lemma}\label{le_lam2toN}
Let $\tilde{\lambda}_i$ be defined in \eqref{tillam}, where $1\le i\le N$. There holds
$$\tilde{\lambda}_i=\frac{3\delta v_b^2a_N^i}{4R^3}|\log\epsilon|+O(\delta).$$
\end{lemma}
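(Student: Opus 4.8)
By \eqref{tillam} it is enough to locate the eigenvalues $\lambda_1\le\cdots\le\lambda_N$ of $\mathbb{C}$, so I would work directly with the characteristic polynomial $F(a)=\det(\mathbb{C}-a\rho(\epsilon)\mathbb{I})$ of \eqref{def_Fa}, whose roots in $a$ are exactly the numbers $\tilde a_N^i=\lambda_i/\rho(\epsilon)$. The sign analysis already carried out above shows every real root of $F$ lies in $(0,4)$. Now fix $i\in\{1,\dots,N\}$. Since the $\theta_i=(i-1)\pi/N$ are pairwise distinct in $[0,\pi)$, the numbers $a_N^i=2(1-\cos\theta_i)$ are pairwise distinct, so each is a \emph{simple} zero of $f_N$ and $f^i(a_N^i)\neq0$. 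Pick $\eta>0$ smaller than half the minimal gap between the $a_N^j$'s; then $|f_N|\ge c>0$ at $a_N^i\pm\eta$ for a constant $c$ depending only on $N$, so by \eqref{def_Fa} the sign of $F(a_N^i\pm\eta)$ coincides with the sign of $f_N(a_N^i\pm\eta)\,\rho(\epsilon)^N$ once $\rho(\epsilon)$ is large enough. As $a_N^i$ is a simple zero of $f_N$ these two signs are opposite, so by the intermediate value theorem $F$ has a root $\tilde a_N^i\in(a_N^i-\eta,a_N^i+\eta)$; running over all $i$ with the disjoint intervals produces $N$ distinct real roots, hence all of them since $\deg F=N$.

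To upgrade $\tilde a_N^i=a_N^i+o(1)$ to the sharp rate, I would substitute $f_N(a)=(a-a_N^i)f^i(a)$ from \eqref{def_fj} into \eqref{def_Fa} and evaluate at $\tilde a_N^i$, obtaining $(\tilde a_N^i-a_N^i)\,f^i(\tilde a_N^i)\,\rho(\epsilon)^N=-f_{N-1}(\tilde a_N^i)\,\rho(\epsilon)^{N-1}+O(\rho(\epsilon)^{N-2})$. Since $\tilde a_N^i\to a_N^i$ and $f^i(a_N^i)\neq0$, the factor $f^i(\tilde a_N^i)$ is bounded away from $0$ for small $\epsilon$, while $f_{N-1}$ is bounded on the interval, so $\tilde a_N^i-a_N^i=O(\rho(\epsilon)^{-1})=O(1/|\log\epsilon|)$. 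Hence $\lambda_i=\tilde a_N^i\,\rho(\epsilon)=a_N^i\,\pi|\log\epsilon|+O(1)$, and multiplying by $\frac{3\delta v_b^2}{4\pi R^3}$ and using \eqref{tillam} gives $\tilde\lambda_i=\frac{3\delta v_b^2 a_N^i}{4R^3}|\log\epsilon|+O(\delta)$; the remainder is genuinely $O(\delta)$ because $\epsilon=e^{-\Lambda/\delta^{1-\beta}}\to0$ as $\delta\to0$.

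The only real point of difficulty is obtaining the $O(1/|\log\epsilon|)$ rate rather than merely $o(1)$, and this rests entirely on the simplicity of the limiting eigenvalues $a_N^i$ (equivalently, the distinctness of the $\theta_i$): a multiple root would force a fractional-power error, which is why this argument is specific to the chain. As a sanity check — and an alternative one-line route — Proposition~\ref{prop_C_train}(i)--(iii) yields $\mathbb{C}=\pi|\log\epsilon|\,\mathbb{A}+E_\epsilon$ with $\mathbb{A}$ as in \eqref{def_A} and $E_\epsilon$ symmetric with $\|E_\epsilon\|_{\mathrm{op}}=O(1)$ uniformly in $\epsilon$; since $\det(\mathbb{A}-a\mathbb{I})=f_N(a)$ shows the eigenvalues of $\mathbb{A}$ are exactly $a_N^1,\dots,a_N^N$, Weyl's perturbation inequality for symmetric matrices immediately gives $\lambda_i=a_N^i\,\pi|\log\epsilon|+O(1)$, hence the claim.
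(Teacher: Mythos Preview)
Your proposal is correct. The main route you take --- intermediate value theorem on the characteristic polynomial $F(a)$, exploiting that each $a_N^i$ is a \emph{simple} root of $f_N$ --- is essentially the paper's approach, though you organize it differently: the paper applies IVT directly on intervals of width $C_i/\rho(\epsilon)$ (choosing $C_i$ large so that the degree-$N$ term dominates the degree-$(N-1)$ term at the endpoints), whereas you first localize to fixed $\eta$-intervals to count roots and then upgrade to the $O(\rho(\epsilon)^{-1})$ rate by substituting $\tilde a_N^i$ back into $F=0$. Your two-step version is slightly tidier because the counting step (each disjoint interval gets exactly one root since $\deg F=N$) is more transparent than the paper's treatment, which handles $i=1$ separately via $F(0)=\det\mathbb{C}>0$.

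Your Weyl alternative, on the other hand, is a genuinely different and more economical route that the paper does not take. Writing $\mathbb{C}=\rho(\epsilon)\mathbb{A}+E_\epsilon$ with $\|E_\epsilon\|_{\mathrm{op}}=O(1)$ (from Proposition~\ref{prop_C_train}) and noting that $\mathbb{A}$ is symmetric with spectrum $\{a_N^1,\dots,a_N^N\}$ immediately gives $|\lambda_i-\rho(\epsilon)a_N^i|=O(1)$ by Weyl, bypassing the characteristic polynomial entirely. This buys you both the localization and the rate in one stroke, and it generalizes instantly to the ring and matrix cases (where the limiting matrix $\hat{\mathbb{A}}$ or $\tilde{\mathbb{A}}$ has repeated eigenvalues) --- Weyl still yields $\lambda_i=\rho(\epsilon)\mu_i+O(1)$ for the ordered limiting eigenvalues $\mu_i$, which is exactly what the paper needs in Lemmas~\ref{le_equal_r} and the matrix analogue, but obtains there through a considerably longer argument.
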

\begin{proof}
For $2\le i\le N$, combining \eqref{def_Fa} and \eqref{fn_de} yields
\begin{align*}
F(a_N^i\!+\!C_i\rho(\epsilon)^{-1})&\!\!=\!C_if^i(a_N^i\!+\!C_i\rho(\epsilon)^{-1})\rho(\epsilon)^{N-1}\!\!\!+\!\!f_{N-1}(a_N^i\!+\!C_i\rho(\epsilon)^{-1})\rho(\epsilon)^{N-1}\!\!\!+\!O(\rho(\epsilon)^{N-2})\\
&\!\!=\!\big(C_if^i(a_N^i\!+\!C_i\rho(\epsilon)^{-1})\!+\!f_{N-1}(a_N^i\!+\!C_i\rho(\epsilon)^{-1})\big)\rho(\epsilon)^{N-1}\!+\!O(\rho(\epsilon)^{N-2}),
\end{align*}
and
\begin{align*}
F(a_N^i\!-C_i\rho(\epsilon)^{-1})\!=\!\big(\!-C_if^i(a_N^i\!-\!C_i\rho(\epsilon)^{-1})\!+\!f_{N-1}(a_N^i\!-\!C_i\rho(\epsilon)^{-1})\big)\rho(\epsilon)^{N-1}\!+\!O(\rho(\epsilon)^{N-2}),
\end{align*}
where $C_i$ are constants, to be determined later. Without loss of generality, assume $C_i<\rho(\epsilon)$. From \eqref{fn_de}, it follows that $|f^i(a_N^i\pm C_i\rho(\epsilon)^{-1})|\to \gamma_i$ for some constant $\gamma_i>0$ as $\epsilon\to 0$. Observe that $C_if^i(a_N^i\pm C_i\rho(\epsilon)^{-1})$ is a polynomial in $C_i$ of degree $N$, whereas $f_{N-1}(a_N^i\pm C_i\rho(\epsilon)^{-1})$ is a polynomial in $C_i$ of degree $N-1$. We then select $C_i$ sufficiently large such that 
$$\big(C_if^i\!(a_N^i+C_i\rho(\epsilon)^{-1})+\!f_{N-1}(a_N^i+C_i\rho(\epsilon)^{-1})\!\big)\!\big(\!-C_if^i\!(a_N^i-C_i\rho(\epsilon)^{-1})+f_{N-1}(a_N^i-C_i\rho(\epsilon)^{-1})\!\big)\!\!<\!0.$$
Since $\epsilon<<1$, we have $F(a_N^i-C_i\rho(\epsilon)^{-1})F(a_N^i+C_i\rho(\epsilon)^{-1})<0$. By the intermediate value theorem, there exists $\tilde{a}_N^i$ in the interval $a_N^i-C_i\rho(\epsilon)^{-1}<\tilde{a}_N^i<a_N^i+C_i\rho(\epsilon)^{-1}$ such that $F(\tilde{a}_N^i)=0$. Thus,
$a_N^i\rho(\epsilon) -C_i<\lambda_i<a_N^i\rho(\epsilon) +C_i$.
Hence, substituting into the scaling relation for $\tilde{\lambda}_i$, we obtain
$$\frac{3\delta v_b^2}{4\pi R^3}(a_N^i\rho(\epsilon)-C_i)<\tilde{\lambda}_i<\frac{3\delta v_b^2}{4\pi R^3}(a_N^i\rho(\epsilon)+C_i).$$

We now prove that there exists a constant $C_1$ such that $0<\tilde{\lambda}_1<C_1\delta$. First, observe that 
$F(0)=\det \mathbb{C}>0$. If $0<a<a_N^2$, we have $0<\arccos \frac{2-a}{2}=\theta<\pi/N$, then
\begin{equation}\label{111}
f_N(a)=-\frac{a\sin N\theta}{\sin\theta}<0,\quad a\in(0,a_N^2).
\end{equation}
Let $a=C_1\rho(\epsilon)^{-1}$, where $C_1$ is a constant to be determined later, and ensure $0<a<a_N^2$. Substituting into $F(a)$, we obtain
\begin{align*}
F(C_1\rho(\epsilon)^{-1})=\big(C_1f^1(C_1\rho(\epsilon)^{-1})+f_{N-1}(C_1\rho(\epsilon)^{-1})\big)\rho(\epsilon)^{N-1}+O(\rho(\epsilon)^{N-2}).
\end{align*} 
By \eqref{111}, $C_1f^1(C_1\rho(\epsilon)^{-1})=f_N(C_1\rho(\epsilon)^{-1})\rho(\epsilon)<0$. Similarly, choose $C_1$ sufficiently large such that $C_1f^1(C_1\rho(\epsilon)^{-1})+f_{N-1}(C_1\rho(\epsilon)^{-1})<0$. This implies
$F(0)F(C_1\rho(\epsilon)^{-1})<0$. By the intermediate value theorem, there exists $\tilde{a}_N^1\in(0,C_1\rho(\epsilon)^{-1})$ satisfying $F(\tilde{a}_N^1)=0$. Hence,
$0<\tilde{\lambda}_1=\frac{3\delta v_b^2\tilde{a}_N^1}{4\pi R^3}\rho (\epsilon)<C_1\delta,$
for some constant $C_1$. This completes the proof.
\end{proof}

We next derive the specific expression for the principal term of $\tilde{\lambda}_1$. Based on our previous derivations, we know that $\lambda_1$ is of $O(1)$ order.

\begin{lemma}\label{le_omg1_chain}
Let $\tilde{\lambda}_1$ be defined in \eqref{tillam}. Then,
$$\tilde{\lambda}_1=\frac{3v_b^2M}{4\pi R^3}\delta\big(1+o(1)\big),$$
where $M$ is defined in \eqref{def_M}.
\end{lemma}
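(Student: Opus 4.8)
The plan is to invoke the scaling relation \eqref{tillam}, $\tilde\lambda_1=\frac{3\delta v_b^2}{4\pi R^3}\lambda_1$, which reduces the claim to proving $\lambda_1=M+o(1)$ as $\epsilon\to0$; recall that $M=\mathrm{Cap}(D)/N$ stays bounded above and below by positive constants independent of $\epsilon$. First I would assemble the structural input. By Proposition \ref{prop_C_train}, with $\rho(\epsilon)=\pi|\log\epsilon|$ and $\mathbb{A}$ as in \eqref{def_A}, we may write $\mathbb{C}=\rho(\epsilon)\mathbb{A}+\mathbb{B}$, where every entry of $\mathbb{B}:=\mathbb{C}-\rho(\epsilon)\mathbb{A}$ is $O(1)$, hence $\|\mathbb{B}\|=O(1)$. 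The matrix $\mathbb{A}$ is exactly the graph Laplacian of the path on $N$ vertices: $\mathbb{A}\mathbf{1}=0$ for $\mathbf{1}=(1,\dots,1)^{\mathrm{T}}$, and on $\mathbf{1}^{\perp}$ its eigenvalues are $a_N^2,\dots,a_N^N$ — this is the factorization \eqref{fn_de} of $f_N=\det(\mathbb{A}-a\mathbb{I})$ — all bounded below by the fixed positive number $a_N^2=2(1-\cos(\pi/N))$. Moreover $\mathbf{1}^{\mathrm{T}}\mathbb{C}\mathbf{1}=\sum_{i,j}C_{ij}=NM$ holds exactly, by the Green's-formula identity from the proof of Proposition \ref{prop_C_train}(iii). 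Finally, from Lemma \ref{le_lam2toN} and its proof, $\lambda_1$ is positive and $O(1)$ while $\lambda_i=a_N^i\rho(\epsilon)(1+o(1))$ for $i\ge2$, so for $\epsilon$ small $\lambda_1$ is the smallest eigenvalue of $\mathbb{C}$ and is separated from the rest by a gap of order $\rho(\epsilon)$.

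The heart of the argument is to show that the $\lambda_1$-eigenvector is, up to $O(\rho(\epsilon)^{-1})$, the constant vector. Let $v$ with $\|v\|=1$ solve $\mathbb{C}v=\lambda_1v$. Rearranging, $\rho(\epsilon)\mathbb{A}v=\lambda_1v-\mathbb{B}v=O(1)$, so $\|\mathbb{A}v\|=O(\rho(\epsilon)^{-1})$. Decomposing $v=c\,\mathbf{1}/\sqrt{N}+w$ with $w\perp\mathbf{1}$, the identity $\mathbb{A}v=\mathbb{A}w$ and the spectral gap give $a_N^2\|w\|^2\le\langle\mathbb{A}w,w\rangle\le\|\mathbb{A}v\|\,\|w\|$, whence $\|w\|=O(\rho(\epsilon)^{-1})$ and $c^2=1-\|w\|^2=1+o(1)$. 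Then from the Rayleigh quotient,
\begin{equation*}
\lambda_1=\langle\mathbb{C}v,v\rangle=\frac{c^2}{N}\,\mathbf{1}^{\mathrm{T}}\mathbb{C}\mathbf{1}+\frac{2c}{\sqrt{N}}\,(\mathbb{C}\mathbf{1})^{\mathrm{T}}w+\langle\mathbb{C}w,w\rangle,
\end{equation*}
the first term is $c^2M=M+o(1)$; the second is $o(1)$ since $\mathbb{C}\mathbf{1}=\mathbb{B}\mathbf{1}=O(1)$ (using $\mathbb{A}\mathbf{1}=0$) and $\|w\|=O(\rho(\epsilon)^{-1})$; and the third is $o(1)$ since $\|\mathbb{C}\|=O(\rho(\epsilon))$ and $\|w\|^2=O(\rho(\epsilon)^{-2})$. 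Hence $\lambda_1=M+o(1)$, which finishes the proof.

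I expect the eigenvector localization to be the only genuinely substantive step: that $\mathbb{C}v=\lambda_1v$ with $\lambda_1$ bounded forces $v$ into $\ker\mathbb{A}=\mathrm{span}\{\mathbf{1}\}$ up to $O(\rho(\epsilon)^{-1})$ is precisely what makes $\lambda_1$ agree to leading order with the Rayleigh quotient $\mathbf{1}^{\mathrm{T}}\mathbb{C}\mathbf{1}/N$, which is identically $M$; the rest is error accounting based on Proposition \ref{prop_C_train}. An equivalent, more computational route, closer to the determinant machinery above, is to evaluate $\det\mathbb{C}$ two ways: via the eigenvalue product, $\det\mathbb{C}=\lambda_1\prod_{i=2}^N\lambda_i=\lambda_1\rho(\epsilon)^{N-1}\prod_{i=2}^Na_N^i\,(1+o(1))$ by Lemma \ref{le_lam2toN}; and via a Schur complement of $\mathbb{C}$ in an orthonormal basis whose first vector is $\mathbf{1}/\sqrt{N}$, where the remaining $(N-1)\times(N-1)$ block is $\rho(\epsilon)$ times a positive-definite matrix with determinant $\prod_{i=2}^Na_N^i$ (here $\det\mathbb{A}=0$ is what removes the would-be $\rho(\epsilon)^N$ term), giving leading term $\rho(\epsilon)^{N-1}\prod_{i=2}^Na_N^i\,(M+o(1))$. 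Comparing the two expressions yields $\lambda_1=M+o(1)$ once more.
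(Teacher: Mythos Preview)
Your proof is correct and takes a genuinely different route from the paper's. The paper stays within its characteristic-polynomial framework: it expands $\det(\mathbb{C}-\lambda\mathbb{I})$ in powers of $\rho(\epsilon)$ with $\lambda=O(1)$ held fixed, observes the $\rho(\epsilon)^N$ coefficient vanishes (since $f_N(0)=0$), writes out the $\rho(\epsilon)^{N-1}$ coefficient as an explicit $N\times N$ determinant whose unique root in $\lambda$ is $\frac{1}{N}\sum_i\tilde{M}_i$, and then invokes a sign-change argument together with Proposition~\ref{prop_C_train}(iii) to conclude $\lambda_1=M+o(1)$. Your argument instead works spectrally: you use the already-established bound $\lambda_1=O(1)$ to force the eigenvector into $\ker\mathbb{A}=\mathrm{span}\{\mathbf{1}\}$ up to $O(\rho(\epsilon)^{-1})$, and then read off $\lambda_1$ from the Rayleigh quotient via the exact identity $\mathbf{1}^{\mathrm{T}}\mathbb{C}\mathbf{1}=NM$. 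Your approach is shorter, avoids any determinant computation, and transports verbatim to the ring and matrix configurations (Lemmas~\ref{le_omg1_ring} and~\ref{le_omg1_matrix}) since it only uses that the leading matrix $\mathbb{A}$ (or $\hat{\mathbb{A}}$, $\tilde{\mathbb{A}}$) is a graph Laplacian with one-dimensional kernel $\mathrm{span}\{\mathbf{1}\}$ and a uniform spectral gap. The paper's route has the virtue of consistency with the machinery of Lemma~\ref{le_lam2toN}, and in principle yields a sharper remainder ($O(\rho(\epsilon)^{-s})$ for any $s<1$), though that precision is not used downstream. Your determinant/Schur-complement alternative in the final paragraph is closer in spirit to the paper's method but executed more cleanly.
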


\begin{proof}
By applying Proposition \ref{prop_C_train} again, we expand the determinant $\det(\mathbb{C}-\lambda\mathbb{I})$ as
\begin{equation*}
\tilde{f}(\lambda):=\det(\mathbb{C}-\lambda\mathbb{I})=\tilde{f}_0(\lambda)\rho(\epsilon)^N+\tilde{f}_{1}(\lambda)\rho(\epsilon)^{N-1}+\tilde{f}_2(\lambda)\rho(\epsilon)^{N-2}+O(\rho(\epsilon)^{N-3}),
\end{equation*}
where each $\tilde{f}_k(\lambda)$ is a polynomial in $\lambda$ of degree $k$ for $k=0,1,2$. It is easy to verify that $\tilde{f}_0(\lambda)=f_N(0)=0$. Direct calculation yields 
\begin{equation*}
\tilde{f}_1(\lambda)=\left|\begin{array}{cccccc}
\tilde{f}_{11}(\lambda)& \tilde{f}_{12}(\lambda) & \tilde{f}_{13}(\lambda) &\cdots &\tilde{f}_{1,N-1}(\lambda) &\tilde{f}_{1N}(\lambda)\\
-1 & 2  &-1&\cdots  &0 & 0 \\
0&-1&2&\cdots&0&0\\
 \vdots&\vdots  &\vdots& &\vdots&\vdots \\
0&0&0&\cdots  & 2 & -1 \\
0& 0 &0&  \cdots& -1 & 1
\end{array}\right|,
\end{equation*}
with $\tilde{f}_{1i}(\lambda)=\tilde{M}_i-\lambda$, where $\tilde{M}_i=\sum_{|j-i|\le1}M_{ij}+\sum_{|j-i|\ge2}C_{ij}$ for $1\le i\le N$. Observe that $\tilde{f}_1(\frac{1}{N}\sum_{i=1}^N\tilde{M}_{i})=0$, which implies $$\tilde{f}\big(\frac{1}{N}\sum_{i=1}^N\tilde{M}_{i}+\rho(\epsilon)^{-s}\big)\tilde{f}\big(\frac{1}{N}\sum_{i=1}^N\tilde{M}_{i}-\rho(\epsilon)^{-s}\big)<0, \quad\mbox{for}~ s\in(0,1).$$ Consequently, we obtain $\lambda_1=\frac{1}{N}\sum_{i=1}^N\tilde{M}_{i}+O\big(\rho(\epsilon)^{-s}\big)$. By virtue of Proposition \ref{prop_C_train} (\romannumeral 3), we have $\frac{1}{N}\sum_{i=1}^N\tilde{M}_{i}=M+o(1)$. Thus, the proof is completed by combining this with \eqref{tillam}.
\end{proof}

\begin{proof}[Proof of Theorem \ref{Th_chain}]
By applying Lemmas \ref{le_omega}, \ref{le_lam2toN}, and \ref{le_omg1_chain}, the theorem follows.
\end{proof}

\subsection{Resonant mode}

The resonant mode is known to remain approximately constant within each resonator. If these constants differ, then as the two resonators are brought closer together, the field gradient between them will blow up at a rate of $O(\epsilon^{-1})$. Recalling definition \eqref{def_eigenmode} and the decomposition \eqref{comp_psi}, it suffices to compute the eigenvector of matrix $\mathbb{C}$ associated with the eigenvalue $\lambda_i$, where $i=1,\dots,N$.

Let $\tilde{\boldsymbol{\alpha}}^i:=\left(
\tilde{\alpha}_1^i,\tilde{\alpha}_2^i,\dots,\tilde{\alpha}_N^i
\right)^{\mathrm{T}}.$
By virtue of \eqref{tillam}, the eigenvalue equation
$\tilde{\mathbb{C}}\tilde{\boldsymbol{\alpha}}^i=\tilde{\lambda}_i\tilde{\boldsymbol{\alpha}}^i$
is equivalent to 
\begin{equation}\label{zzz}
\mathbb{C}\tilde{\boldsymbol{\alpha}}^i=\lambda_i\tilde{\boldsymbol{\alpha}}^i.
\end{equation}
Set $\mathbb{A}_i:=\mathbb{A}-a_N^i\mathbb{I}$. Using Proposition \ref{prop_C_train}, we rewrite \eqref{zzz} as 
$\rho(\epsilon)  \mathbb{A}_i\tilde{\boldsymbol{\alpha}}^i+O(1)=0,$
which implies
$\mathbb{A}_i\tilde{\boldsymbol{\alpha}}^i=O(\rho(\epsilon)^{-1})$.

Let 
\begin{equation}\label{alpi}
\boldsymbol{\alpha}^i=(\alpha^i_1,\dots,\alpha^i_N)^\mathrm{T}.
\end{equation}
If $i=1$, we have $a_N^1=0$, implying $\mathbb{A}_1=\mathbb{A}$. From the definition of $\mathbb{A}$ in \eqref{def_A}, it follows that the vector $\boldsymbol{\alpha}^1=(1,1,\cdots,1)^\mathrm{T}$ spans the solution space of the equation $\mathbb{A}_1\boldsymbol{\alpha}^1=0$. For $2\le i\le N$, by recursion, we obtain
\begin{equation}\label{def_alpji}
\begin{aligned}
\alpha_l^i=\frac{\sin l\theta_i-\sin(l-1)\theta_i}{\sin\theta_i},
\end{aligned}
\end{equation}
where $1\le l\le N$, and the vector $\boldsymbol{\alpha}^i$ spans the solution space of $\mathbb{A}_i\boldsymbol{\alpha}^i=0$. This yields the eigenvectors
\begin{equation}\label{tila}
\tilde{\boldsymbol{\alpha}}^i=\boldsymbol{\alpha}^i+O(\rho(\epsilon)^{-1}),\quad i=1,\dots,N.
\end{equation}

The resonant modes are given by
\begin{equation}\label{def_ui}
u^i(x)=\S_D[\phi^i](x)+O(\omega_i),\quad i=1,\dots,N.
\end{equation}
By using \eqref{comp_psi} in Lemma \ref{le_omega}, we have 
\begin{equation}\label{def_phi_i} \phi^i:=\sum_{l=1}^N\tilde{\alpha}_{l}^i\psi_l.
\end{equation}
Note that $v_l=\S_D(\psi_l)$, then
$\S_D[\phi^i](x)=\sum_{l=1}^N\tilde{\alpha}_{l}^i\S_D(\psi_l)+O(\omega_i)
=\sum_{l=1}^N\tilde{\alpha}_{l}^iv_l+O(\omega_i).$
Hence, 
\begin{equation}
u^i(x)=\tilde{\alpha}_l^i+O(\omega_i)\quad\text{on}\ \partial D_l, 1\le i,l\le N,
\end{equation}
where $\tilde{\alpha}_{l}^i$ is defined in \eqref{tila}. 

For $2\le i\le N$ and $1\le l\le N-1$, \eqref{def_alpji} yields
\begin{equation}
\tilde{\alpha}_{l+1}^i-\tilde{\alpha}_{l}^i=-\frac{2\big(1-\cos\theta_i\big)}{\sin\theta_i}\sin l\theta_i+O(\rho(\epsilon)^{-1}).
\end{equation}
When $l(i-1)=Nt$ holds for some $t\in\bN_+$, we find $\tilde{\alpha}_{l+1}^i-\tilde{\alpha}_{l}^i=O(\rho(\epsilon)^{-1})$. By \cite[Theorem 4.1]{lz}, we obtain 
\begin{equation}\label{nabu}
\|\nabla u^1\|_{L^\infty(\Omega)}, \|\nabla u^i\|_{L^\infty(\Omega_{l,l+1}^{r})}\lesssim\frac{1}{\epsilon|\log\epsilon|},
\end{equation}
where $i$ and $l$ satisfy $l(i-1)=Nt$ for $t\in\bN_+$.

\begin{proof}[Proof of Theorem \ref{Th_mode} and Remark \ref{Re}]
By combining with \eqref{def_alpji}--\eqref{nabu}, we complete the proof.
\end{proof}

\subsection{Scattered solution} 

By Theorem \ref{Th_chain} and Theorem \ref{Th_mode}, the scattered field of a finite chain of resonators under an incident plane wave $u^{in}$ is characterized as follows.
\begin{theorem}
Let $\mathbb{V}=(\boldsymbol{\alpha}^1,\cdots,\boldsymbol{\alpha}^N)^\mathrm{T}$ be defined in \eqref{alpi}, and let $u^i$ denote the subwavelength resonant modes defined in \eqref{def_ui} for $i=1,\dots,N$. Then the solution $u$ to the scattering problem \eqref{eq_helm} with incident wave $u^{in}$ at frequency $\omega$ satisfies, for $x\in\Omega$,
\begin{equation}\label{6}
u(x)=u^{in}-\S_D^k[\S_D^{-1}[u^{in}]]+\sum_{i=1}^Na_iu^i+O(\omega), 
\end{equation}
where $(a_1,\dots,a_N)^{\mathrm{T}}$ solves the system 
\begin{equation}\label{5}
\mathbb{V}\left(\begin{matrix}
\omega_1^2\!-\omega^2&&\\
&\!\ddots\!&\\
&&\omega_N^2\!-\omega^2
\end{matrix}\right)\left(\begin{matrix}
a_1\\
\vdots\\
a_N
\end{matrix}\right)\!=\!-\frac{3\delta v_b^2}{4\pi R^3}\left(\begin{matrix}
\int_{\pD_1}\!\S_D^{-1}[u^{in}]\rd \sigma\\
\vdots\\
\int_{\pD_N}\!\S_D^{-1}[u^{in}]\rd \sigma
\end{matrix}\right)\!+O(\delta^{2-\beta}\!+\delta^{1-\beta}\omega^2\!+\omega^3).
\end{equation}
\end{theorem}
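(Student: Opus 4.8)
The plan is to rerun the reduction behind Lemma~\ref{le_omega}, but now keeping the incident field $u^{in}$ on the right-hand side of \eqref{eq_phipsi}. Starting from the ansatz \eqref{us} (the outgoing Green's function in $\S_D^k$ automatically enforces the Sommerfeld condition) and expanding $\S_D^{k_b},\S_D^{k}$, $\cK_D^{k_b,*},\cK_D^{k,*}$ via \eqref{expan_s}--\eqref{expan_k}, the first row of \eqref{eq_phipsi} gives $\S_D[\phi-\psi]=u^{in}+O(\omega)$, hence $\phi=\psi+\S_D^{-1}[u^{in}]+O(\omega)$ by Lemma~\ref{le_sk}(i). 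Substituting this into the second row, which then reduces to $\bigl(-\tfrac12 I+\cK_D^*\bigr)[\phi]=O(\omega^2+\delta)$, and using Lemma~\ref{le_sk}(ii), one obtains $\phi=\sum_{j=1}^N\alpha_j\psi_j+O(\omega^2+\delta)$ and therefore
\[
\psi=\sum_{j=1}^N\alpha_j\psi_j-\S_D^{-1}[u^{in}]+O(\omega+\delta)
\]
for some coefficients $\alpha_j$; the term $-\S_D^{-1}[u^{in}]$ is the only change from \eqref{comp_psi}.

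To pin down the $\alpha_j$, I would integrate the second row of \eqref{eq_phipsi} over each $\pD_i$, exactly as in \eqref{eq_2}$\to$\eqref{eq_3}$\to$\eqref{z}, using \eqref{eq_prop}, the relations $\S_D[\psi_j]=v_j$ and $v_j\equiv\delta_{ij}$ in $D_i$, and $C_{ij}=-\int_{\pD_i}\psi_j\,\rd\sigma$. The $-\S_D^{-1}[u^{in}]$ part of $\psi$ now contributes the term $\delta\int_{\pD_i}\S_D^{-1}[u^{in}]\,\rd\sigma$, while the datum $\delta\,\partial_\nu u^{in}$ of \eqref{eq_phipsi} integrates to $\delta\int_{D_i}\Delta u^{in}=-\delta k^2\int_{D_i}u^{in}=O(\delta\omega^2)$. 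Multiplying by $v_b^2/|D_i|$ and using $k_b=\omega/v_b$, $|D_i|=4\pi R^3/3$, this produces
\[
(\tilde{\mathbb C}-\omega^2\mathbb I)\boldsymbol\alpha=-\frac{3\delta v_b^2}{4\pi R^3}\Bigl(\textstyle\int_{\pD_1}\S_D^{-1}[u^{in}]\,\rd\sigma,\dots,\int_{\pD_N}\S_D^{-1}[u^{in}]\,\rd\sigma\Bigr)^{\mathrm T}+O(\mathrm{err}),
\]
with $\boldsymbol\alpha=(\alpha_1,\dots,\alpha_N)^{\mathrm T}$.

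For \eqref{6}, I would put $\psi=\sum_j\alpha_j\psi_j-\S_D^{-1}[u^{in}]+O(\omega+\delta)$ into $u=u^{in}+\S_D^k[\psi]$ on $\Omega$, using $\S_D^k=\S_D+O(\omega)$ and $\S_D[\psi_j]=v_j$, to get $u=u^{in}-\S_D^k[\S_D^{-1}[u^{in}]]+\sum_j\alpha_j v_j+O(\omega)$. Since the resonant modes satisfy $u^i=\sum_l\tilde\alpha^i_l v_l+O(\omega_i)=\sum_l\alpha^i_l v_l+O(\omega_i+|\log\epsilon|^{-1})$ by \eqref{def_ui}, \eqref{def_phi_i}, \eqref{tila}, and since the $a_N^i$ are pairwise distinct (so $\{\boldsymbol\alpha^i\}$ is a basis of $\mathbb{R}^N$), I would define $\mathbf a=(a_1,\dots,a_N)^{\mathrm T}$ as the coordinates of $\boldsymbol\alpha$ in that basis, $\boldsymbol\alpha=\sum_i a_i\boldsymbol\alpha^i$. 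Then $\sum_j\alpha_j v_j=\sum_i a_i u^i+O(\,\cdot\,)$, which gives \eqref{6}. Feeding $\boldsymbol\alpha=\sum_i a_i\boldsymbol\alpha^i$ back into the $\boldsymbol\alpha$-system and using that $\boldsymbol\alpha^i$ is the \emph{exact} null vector of $\mathbb A-a_N^i\mathbb I$ together with $\mathbb C=\rho(\epsilon)\mathbb A+O(1)$ (Proposition~\ref{prop_C_train}), so that $\tilde{\mathbb C}\boldsymbol\alpha^i=\tilde\lambda_i\boldsymbol\alpha^i+O(\delta)$ and $\tilde\lambda_i=\omega_i^2+O(\delta)$, the left-hand side collapses to $\mathbb V\,\mathrm{diag}(\omega_1^2-\omega^2,\dots,\omega_N^2-\omega^2)\,\mathbf a$, yielding \eqref{5}.

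The main obstacle is the error analysis rather than the algebra: one must sharpen the naive error $O(\delta^2+\delta\omega+\omega^3)$ above to the stated $O(\delta^{2-\beta}+\delta^{1-\beta}\omega^2+\omega^3)$ in \eqref{5}. This forces one to exploit the \emph{exact} identity $\mathbb A\boldsymbol\alpha^i=a_N^i\boldsymbol\alpha^i$, so that the $O(\rho(\epsilon))=O(\delta^{-(1-\beta)})$ part of $\mathbb C\boldsymbol\alpha^i$ cancels against the $O(\rho(\epsilon)^{-1})$-size discrepancy $\tilde{\boldsymbol\alpha}^i-\boldsymbol\alpha^i$ from \eqref{tila}, and to keep the two scales $\omega_1^2\sim\delta$ and $\omega_i^2\sim\delta|\log\epsilon|$ ($i\ge2$) separated when passing between the $\{\psi_j\}$, $\{\boldsymbol\alpha^i\}$ and $\{u^i\}$ descriptions. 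A secondary point is that \eqref{5} is to be read for $\omega$ away from the resonances $\omega_i$, where $\mathrm{diag}(\omega_i^2-\omega^2)$ is invertible and $\mathbf a$ is determined; the estimates must be arranged so as not to hide inverse powers of $\omega_i^2-\omega^2$.
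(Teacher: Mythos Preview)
Your plan is essentially the paper's proof, with one organizational difference worth noting. You expand $\phi$ in the elementary basis $\{\psi_j\}$, obtain the matrix system $(\tilde{\mathbb C}-\omega^2\mathbb I)\boldsymbol\alpha=\mathrm{RHS}$, and then change to the basis $\{\boldsymbol\alpha^i\}$, using the approximate relation $\tilde{\mathbb C}\boldsymbol\alpha^i=\omega_i^2\boldsymbol\alpha^i+O(\delta)$. The paper instead expands $\phi$ directly in the \emph{exact} resonant densities $\phi^i=\sum_l\tilde\alpha_l^i\psi_l$, writes the integrated second row of \eqref{eq_phipsi} at the scattering frequency $\omega$ (your \eqref{3}), notes that the very same identity with $\omega$ replaced by $\omega_i$ holds \emph{exactly} for each $\phi^i$ (this is \eqref{eq_3}), and subtracts. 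The subtraction produces the factor $(\omega_i^2-\omega^2)$ without ever invoking $\tilde{\mathbb C}\boldsymbol\alpha^i\approx\omega_i^2\boldsymbol\alpha^i$; only at the very end is $\tilde\alpha_l^i$ replaced by $\alpha_l^i$ via \eqref{tila}. This buys a cleaner bookkeeping of the error, precisely the point you flag as the main obstacle: the $O(\rho(\epsilon))$ part of $\mathbb C$ never has to be cancelled by hand, because it is already absorbed in the exact eigen-relation $\mathbb C\tilde{\boldsymbol\alpha}^i=\lambda_i\tilde{\boldsymbol\alpha}^i$ used in the subtraction. Your route is correct but makes the error-tracking you worry about genuinely harder; adopting the paper's subtraction device would let you bypass it.
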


\begin{proof}
Let $(\phi,\psi)$ be solutions to the scattering problem \eqref{eq_phipsi}. From the boundary integral formulation, we derive
\begin{align}
\S_{D}[\phi-\psi]&=u^{in}+O(\omega),\label{11}\\ 
\big(-\frac{1}{2}\cI+\cK_D^*+k_b^2\cK_{D,2}\big)[\phi]-\delta\big(\frac{1}{2}\cI+\cK_D^*\big)[\psi]&=O(\delta\omega+\omega^3).\label{22}
\end{align}
From \eqref{11}, we obtain
\begin{equation}\label{0}
\psi=\phi-\S_D^{-1}[u^{in}]+O(\omega).
\end{equation}
Substituting this into \eqref{22} yields 
\begin{equation}\label{1}
\big(-\frac{1}{2}\cI+\cK_D^*+k_b^2\cK_{D,2}\big)[\phi]-\delta\big(\frac{1}{2}\cI+\cK_D^*\big)[\phi]=-\delta\big(\frac{1}{2}\cI+\cK_D^*\big)\S_D^{-1}[u^{in}]+O(\delta\omega+\omega^3).
\end{equation}
We perform a decomposition 
\begin{equation}\label{2}
\phi=\sum_{i=1}^Na_i\phi^i+O(\delta+\omega^2),
\end{equation}
where $\phi^i$ are defined in \eqref{def_phi_i}, and $a_i$ are constants of order $O(1)$, see \cite[Theorem 6.1]{ady}. By combining \eqref{1} with \eqref{2}, it follows that
\begin{equation}\label{3}
-\sum_{i=1}^N\frac{\omega^2}{v_b^2}\int_{\pD_l}\S_{D}\big[a_i\phi^i\big]\rd x-\delta\sum_{i=1}^N\int_{\pD_l}a_i\phi^i\rd \sigma=-\delta\int_{\pD_l}\S_D^{-1}[u^{in}]\rd \sigma+O(\delta\omega+\omega^3).
\end{equation}
By construction, $\phi^i$ satisfy \eqref{eq_3} exactly at resonant frequencies $\omega=\omega_i$. Thus, 
\begin{equation}\label{4}
-\sum_{i=1}^N\frac{\omega_i^2}{v_b^2}\int_{D_l}\S_{D}\big[a_i\phi^i\big]\rd x-\delta\sum_{i=1}^N\int_{\pD_l}a_i\phi^i\rd \sigma=O(\delta\omega+\omega^3).
\end{equation}
Subtracting \eqref{4} from \eqref{3} gives
$$\sum_{i=1}^N\frac{a_i(\omega_i^2-\omega^2)|D_l|\alpha_l^i}{v_b^2}=-\delta\int_{\pD_l}\S_D^{-1}[u^{in}]\rd \sigma+O(\delta^{2-\beta}+\delta^{1-\beta}\omega^2+\omega^3),$$
which is equivalent to \eqref{5}. Combining \eqref{us}, \eqref{0} and \eqref{1} yields \eqref{6}.
\end{proof}

\section{The ring arrangement}\label{sec_ring}

In this section, we analyze a ring-shaped configuration of $N$ resonators. Unlike the linear chain case, we will show that the function $f_N(a)$ associated with this geometry admits multiple roots (i.e., not all roots are simple). To account for this multiplicity, we extend the analytical framework developed in Section \ref{sec_chain}, establishing a comprehensive approach to characterize the eigenvalues of the capacitance matrix.

\subsection{Resonant frequency} Similarly as Proposition \ref{prop_C_train}, we derive the following properties for the capacitance matrix in the ring configuration.

\begin{prop}\label{prop_C_ring}
For the capacitance matrix $\mathbb{C}$ of a ring arrangement, we have

(\romannumeral 1) $\mathbb{C}$ is a circulant matrix, satisfying the periodicity conditions: $C_{ij}=C_{i-1,j-1}$ and $C_{i1}=C_{i-1,N}$ for $i,j=2,\dots,N$;

(\romannumeral 2) The leading entries satisfy: $C_{11}=2\rho(\epsilon)+O(1)$, $C_{12},C_{1N}=-\rho(\epsilon)+O(1)$;

(\romannumeral 3) For $N\ge4$, the off-diagonal entries decay as: $C_{1j}=O(1)$, $j=3,\dots,N-1$.
\end{prop}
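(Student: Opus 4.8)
The plan is to establish Proposition~\ref{prop_C_ring} by the same geometric/variational bookkeeping used for the chain in Proposition~\ref{prop_C_train}, exploiting the extra rotational symmetry of the ring to collapse all the case analysis into a single representative row. First I would prove (\romannumeral 1): the configuration is invariant under the cyclic rotation $x_l \mapsto x_{l+1}$ (indices mod $N$), which is a rigid motion of $\bR^3$ taking $D$ to itself and permuting the $D_l$ cyclically. Since the functions $v_i$ solving \eqref{eq_vi} are defined intrinsically by the geometry, this rotation sends $v_i$ to $v_{i+1}$ (indices mod $N$), and hence $C_{ij}=\int_\Omega \nabla v_i\cdot\nabla v_j\,\rd x$ is unchanged under $(i,j)\mapsto(i+1,j+1)$ mod $N$. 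This is exactly the statement that $\mathbb{C}$ is circulant, i.e. $C_{ij}=C_{i-1,j-1}$ for $i,j=2,\dots,N$ and the wrap-around identity $C_{i1}=C_{i-1,N}$.

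Next, for (\romannumeral 2) and (\romannumeral 3), by circulance it suffices to analyze the first row $C_{11},C_{12},\dots,C_{1N}$. Here $D_1$ sits at a vertex of the regular $N$-gon and has exactly two neighbors, $D_2$ and $D_N$, each at distance $\epsilon$; every other $D_j$ ($3\le j\le N-1$, which requires $N\ge 4$) is separated from $D_1$ by a distance bounded below by a positive constant independent of $\epsilon$. So I would invoke exactly the same three ingredients as in the chain proof: (a) \cite[Proposition~1.5]{lz}, which localizes the energy $C_{11}=\int_\Omega|\nabla v_1|^2\,\rd x$ to the two narrow necks $\Omega_{1,2}^r$ and $\Omega_{1,N}^r$ and evaluates each neck contribution as $\pi|\log\epsilon|+O(1)$, giving $C_{11}=2\rho(\epsilon)+O(1)$ with $\rho(\epsilon)=\pi|\log\epsilon|$; (b) the mixed-neck estimate from \cite{lly} giving $C_{12}=C_{1N}=-\pi|\log\epsilon|+O(1)=-\rho(\epsilon)+O(1)$; and (c) \cite[Lemma~2.5]{bly2}, which yields $C_{1j}=O(1)$ whenever $D_1$ and $D_j$ are not close-to-touching, covering $j=3,\dots,N-1$. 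The only genuinely ring-specific check is that for $N\ge 4$ the "third-nearest" sphere $D_3$ (and $D_{N-1}$) is indeed at an $O(1)$ distance from $D_1$; this is a one-line computation from the regular-polygon coordinates, noting that the chord length between vertices two apart is $2(2R+\epsilon)\sin(\pi/N)\ge c>0$ uniformly.

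I do not expect a serious obstacle here; the proposition is essentially a transcription of Proposition~\ref{prop_C_train} with "two neighbors for every resonator" replacing "two neighbors in the interior, one at each end," and circulance doing the work of reducing everything to one row. The only point requiring a modicum of care is the bookkeeping of the $O(1)$ constants: one should be slightly careful that the cited neck asymptotics from \cite{lz,lly} are stated for the relevant geometry (two equal balls of radius $R$ at gap $\epsilon$, with the rest of the configuration sitting at $O(1)$ distance) so that the background contributions are genuinely bounded independently of $\epsilon$; this is the same caveat already implicit in Proposition~\ref{prop_C_train}. If one wanted a cleaner error term one could also record, as in Proposition~\ref{prop_C_train}(\romannumeral 3), the identity $\frac{1}{N}\sum_{i,j}C_{ij}=M$, which follows from Green's formula and \eqref{def_M} exactly as before, but this is not needed for the three assertions as stated.
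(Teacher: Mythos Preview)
Your proposal is correct and matches the paper's approach exactly: the paper does not give an independent proof of this proposition but simply states that it follows ``similarly as Proposition~\ref{prop_C_train},'' which is precisely the transcription you outline (circulance from the cyclic rigid motion, then the neck asymptotics from \cite{lz,lly} for the two adjacent pairs and the $O(1)$ bound from \cite{bly2} for the rest). One small slip worth fixing: the chord between next-nearest vertices of a regular $N$-gon with side $s=2R+\epsilon$ is $2s\cos(\pi/N)$, not $2s\sin(\pi/N)$; your conclusion that the surface gap $2(2R+\epsilon)\cos(\pi/N)-2R$ is bounded below by a positive constant for $N\ge 4$ is nonetheless correct.
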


Let $\hat{\mathbb{A}}$ be an $N$-th order circulant matrix, given by
\begin{equation*}
\hat{\mathbb{A}}=\left(\begin{array}{ccccc}
2 & -1 &\cdots  &  0&-1  \\
-1 & 2 & \cdots & 0 &0  \\
\vdots& \vdots & & \vdots & \vdots \\
0&0  & \cdots & 2 & -1 \\
-1& 0 & \cdots & -1 & 2
\end{array}\right),
\end{equation*}
then there holds
\begin{equation}\label{def_Far}
F(a)=f_N(a)\rho(\epsilon)^N+f_{N-1}(a)\rho(\epsilon)^{N-1}+O(\rho(\epsilon)^{N-2}),
\end{equation}
where 
$f_N(a)=\det(\hat{\mathbb{A}}-a\mathbb{{I}})$. 

Similarly as before, we restrict our consideration to $0<a<4$. One can verify that $f_N(a)=g_N(a)-g_{N-2}(a)-2$, where $g_N(a)$ is defined in \eqref{def_gn}. This yields
$$f_N(a)=\frac{\sin(N+1)\theta}{\sin\theta}-\frac{\sin(N-1)\theta}{\sin\theta}-2=2(\cos N\theta-1),$$
where $\theta=\arccos(2-a)/2$ for $a\in(0,4)$. Setting $f_N(a)=0$, we have $\theta=2(t-1)\pi/N$ for $t=2,\dots,\alpha(N)$, with $\alpha(N)$ defined in \eqref{def_alpN}. Consequently, 
$$a=2\Big(1-\cos \frac{2(t-1)\pi}{N}\Big),\quad t=2,\dots,\alpha(N).$$ 

Let $\hat{a}_N^t$ and $\hat{\theta}_t$ be defined as in \eqref{def_hai} for $t=2,\dots,\alpha(N)$. Observing that
$$(\cos N\hat{\theta}_t-1)=0,\quad\frac{d}{d\theta}(\cos N\hat{\theta}_t-1)=0,\quad \frac{d^2}{d\theta^2}(\cos N\hat{\theta}_t-1)\neq0,$$
and $\hat{a}_N^t$ are double roots of $f_N(a)$. Thus, accounting for multiplicities, $f_N(a)$ has $2(\alpha(N)-1)$ roots. For odd $N$, $\hat{a}_N^1=0$ is a simple root of $f_N(a)$. For even $N$, both $\hat{a}_N^1=0$ and $\hat{a}_N^{\alpha(N)+1}=4$ are simple roots of $f_N(a)$. Ignoring the negative sign, we factorize $f_N(a)$ as
\begin{equation}\label{far}
f_N(a)=\begin{cases}
(a-\hat{a}_N^1)(a-\hat{a}_N^2)^2\cdots(a-\hat{a}_N^{(N+1)/2})^2, &\ N \text{ is odd},\\
(a-\hat{a}_N^1)(a-\hat{a}_N^2)^2\cdots(a-\hat{a}_N^{N/2})^2(a-\hat{a}_N^{N/2+1}), &\ N \text{ is even}.
\end{cases}
\end{equation}

For simplicity, we focus on the odd $N$ case hereafter, as the even case follows analogously. Following the notation in \eqref{def_fj}, we denote 
\begin{equation}\label{def_fNt_r}
f^1(a):=\frac{f_N(a)}{a-\hat{a}_N^1},\quad\text{and}~ f^t(a):=\frac{f_N(a)}{(a-\hat{a}_N^t)^2},\quad\mbox{for}~t=2,\dots,(N+1)/2.
\end{equation}

\begin{lemma}\label{le_lam1_r}
Let $\tilde{\lambda}_1$ be defined in \eqref{tillam}. Then $\tilde{\lambda}_1=O(\delta)>0$.
\end{lemma}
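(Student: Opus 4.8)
\textbf{Proof proposal for Lemma \ref{le_lam1_r}.}

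The plan is to mimic the argument used for Lemma \ref{le_omg1_chain} in the chain case, adapting it to the circulant structure of the ring configuration. By the scaling relation \eqref{tillam}, it suffices to show that the smallest eigenvalue $\lambda_1$ of the capacitance matrix $\mathbb{C}$ is of order $O(1)$ and positive; positivity is immediate from Proposition \ref{prop_C} (\romannumeral 1), which guarantees $\mathbb{C}$ is positive definite, so every eigenvalue is strictly positive, hence $\tilde{\lambda}_1 = \frac{3\delta v_b^2}{4\pi R^3}\lambda_1 > 0$. The substantive claim is therefore the $O(1)$ bound, equivalently that the root $\tilde{a}_N^1$ of $F(a) = \det(\mathbb{C}-a\rho(\epsilon)\mathbb{I})$ corresponding to $\lambda_1$ satisfies $\tilde{a}_N^1 = O(\rho(\epsilon)^{-1})$.

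First I would record that $F(0) = \det\mathbb{C} > 0$ by positive definiteness. Next I would expand $F(a)$ near $a=0$ using Proposition \ref{prop_C_ring}: writing $F(a) = f_N(a)\rho(\epsilon)^N + f_{N-1}(a)\rho(\epsilon)^{N-1} + O(\rho(\epsilon)^{N-2})$ as in \eqref{def_Far}, and noting from \eqref{far} that $\hat a_N^1 = 0$ is a simple root of $f_N$, so that on the interval $(0,\hat a_N^2)$ one has $f_N(a) = 2(\cos N\theta - 1) < 0$ with $\theta = \arccos((2-a)/2) \in (0, 2\pi/N)$. Then I would set $a = C_1\rho(\epsilon)^{-1}$ for a constant $C_1$ to be chosen (small enough that $0 < a < \hat a_N^2$), and observe $C_1 f^1(C_1\rho(\epsilon)^{-1}) = f_N(C_1\rho(\epsilon)^{-1})\rho(\epsilon) < 0$, where $f^1$ is as in \eqref{def_fNt_r}. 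Choosing $C_1$ large enough that the dominant contribution $\bigl(C_1 f^1(C_1\rho(\epsilon)^{-1}) + f_{N-1}(C_1\rho(\epsilon)^{-1})\bigr)\rho(\epsilon)^{N-1}$ controls the $O(\rho(\epsilon)^{N-2})$ remainder and remains negative, we get $F(C_1\rho(\epsilon)^{-1}) < 0$ for $\epsilon \ll 1$. By the intermediate value theorem there is a root $\tilde a_N^1 \in (0, C_1\rho(\epsilon)^{-1})$ of $F$, whence $0 < \lambda_1 < C_1$ and so $\tilde{\lambda}_1 = \frac{3\delta v_b^2}{4\pi R^3}\lambda_1 \rho(\epsilon)^{-1}\cdot\rho(\epsilon)$... more precisely $0 < \tilde{\lambda}_1 < \frac{3\delta v_b^2 C_1}{4\pi R^3}$, giving $\tilde{\lambda}_1 = O(\delta)$.

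The main obstacle is a bookkeeping subtlety rather than a conceptual one: one must confirm that the polynomial $C_1 f^1(C_1 \rho(\epsilon)^{-1})$, viewed as a polynomial in $C_1$, has degree $N$ while $f_{N-1}(C_1\rho(\epsilon)^{-1})$ has degree at most $N-1$ in $C_1$, so that the leading $C_1$-behavior is governed by $f^1$ and the sign can be forced negative by taking $C_1$ large — exactly as in the chain proof. One should also verify that the simplicity of the root $\hat a_N^1 = 0$ (needed so that $f^1(0) \neq 0$) holds in both the odd and even $N$ cases; from \eqref{far} this is indeed so. With these checks in place the argument is complete; I would remark that, unlike Lemma \ref{le_omg1_chain}, we do not here need the sharper leading coefficient $M$ since only the order of magnitude is asserted, though the same computation with $\tilde f_1$ as in the chain case would recover $\tilde{\lambda}_1 = \frac{3v_b^2 M}{4\pi R^3}\delta(1+o(1))$ if desired.
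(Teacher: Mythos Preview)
Your proposal is correct and follows essentially the same approach as the paper: the paper's proof is a one-line reference to the chain methodology (specifically the $\tilde\lambda_1$ part of Lemma \ref{le_lam2toN}), and you have spelled out precisely that argument adapted to the ring case, using the simplicity of $\hat a_N^1=0$ in \eqref{far}, the sign of $f_N$ on $(0,\hat a_N^2)$, and the intermediate value theorem. One minor point: the relevant chain argument you are mimicking is the second half of Lemma \ref{le_lam2toN} rather than Lemma \ref{le_omg1_chain} (which gives the sharper constant $M$), but you correctly note this distinction at the end.
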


\begin{proof}
Since $\hat{a}_N^1=0$ is a simple root of $f_N(a)$, the result follows directly by applying the methodology of Theorem \ref{Th_chain} to the ring configuration.
\end{proof}

From Lemma \ref{le_lam1_r}, we conclude that $\tilde{a}_N^1=O(\rho(\epsilon)^{-1})$ is the smallest root of $F(a)$. The next lemma identifies the intervals containing the remaining roots of $F(a)$.

\begin{lemma}\label{le_inter_r}
Let $F(a)$ be defined as in \eqref{def_Far}. The remaining roots $\tilde{a}_N^i$ of $F(a)$, for $2\le i\le N$, lie within the union of intervals: $$\cup_{t=2}^{(N+1)/2}\big(\hat{a}_N^{t}-C/\rho (\epsilon)^{1/2},\hat{a}_N^{t}+C/\rho (\epsilon)^{1/2}\big),$$ 
for some constant $C>0$.
\end{lemma}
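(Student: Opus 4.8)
The plan is to mimic the argument of Lemma~\ref{le_lam2toN}, but adapted to the fact that, for the ring configuration, each nonzero root $\hat a_N^t$ of $f_N(a)$ is a \emph{double} root rather than a simple one. Write $F(a) = f_N(a)\rho(\epsilon)^N + f_{N-1}(a)\rho(\epsilon)^{N-1} + O(\rho(\epsilon)^{N-2})$ as in \eqref{def_Far}, and fix $t\in\{2,\dots,(N+1)/2\}$. Using the factorization \eqref{far} and the notation $f^t(a) = f_N(a)/(a-\hat a_N^t)^2$ from \eqref{def_fNt_r}, we have $f_N(a) = (a-\hat a_N^t)^2 f^t(a)$ with $f^t(\hat a_N^t)\neq 0$. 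The key point is the scaling: if we probe $F$ at $a = \hat a_N^t \pm C\rho(\epsilon)^{-1/2}$, then $f_N(\hat a_N^t \pm C\rho(\epsilon)^{-1/2}) = C^2\rho(\epsilon)^{-1} f^t(\hat a_N^t) + o(\rho(\epsilon)^{-1})$, so $f_N(a)\rho(\epsilon)^N$ contributes a term of size $C^2 f^t(\hat a_N^t)\rho(\epsilon)^{N-1}$ — exactly the same order $\rho(\epsilon)^{N-1}$ as the next term $f_{N-1}(a)\rho(\epsilon)^{N-1}$. This is why the correct length scale for the double root is $\rho(\epsilon)^{-1/2}$ rather than $\rho(\epsilon)^{-1}$.

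Concretely, the steps would be: (1) Evaluate $F(\hat a_N^t \pm C\rho(\epsilon)^{-1/2})$ and collect terms, obtaining
$$F(\hat a_N^t \pm C\rho(\epsilon)^{-1/2}) = \big(C^2 f^t(\hat a_N^t) + f_{N-1}(\hat a_N^t)\big)\rho(\epsilon)^{N-1} + o(\rho(\epsilon)^{N-1}),$$
where $f^t(\hat a_N^t)\neq 0$ has a definite sign (from \eqref{far}, the sign of $f^t$ near $\hat a_N^t$ is that of the product of the remaining factors, which is constant and nonzero). (2) Since $f^t(\hat a_N^t)\neq 0$, choose $C>0$ large enough that $C^2 f^t(\hat a_N^t) + f_{N-1}(\hat a_N^t)$ has the sign of $f^t(\hat a_N^t)$ and dominates the lower-order remainder, i.e. $F(\hat a_N^t + C\rho(\epsilon)^{-1/2})$ and $F(\hat a_N^t - C\rho(\epsilon)^{-1/2})$ both have the sign of $f^t(\hat a_N^t)\rho(\epsilon)^{N-1}$ for $\epsilon$ small. (3) Now look at the midpoint: by \eqref{far} together with \eqref{def_Far}, $F(\hat a_N^t) = f_{N-1}(\hat a_N^t)\rho(\epsilon)^{N-1} + O(\rho(\epsilon)^{N-2})$, and more importantly, just off the double root the sign of $F$ is governed by $(a-\hat a_N^t)^2 f^t(a)\rho(\epsilon)^N$, which has the sign of $f^t(\hat a_N^t)$; whereas a more careful look is needed — since the double-root term is nonnegative times $f^t$, the genuine sign change producing roots comes from balancing against $f_{N-1}$. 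The cleanest way is: on the interval $(\hat a_N^t - C\rho(\epsilon)^{-1/2}, \hat a_N^t + C\rho(\epsilon)^{-1/2})$ the rescaled function $G(s) := \rho(\epsilon)^{-(N-1)}F(\hat a_N^t + s\rho(\epsilon)^{-1/2})$ converges, as $\epsilon\to 0$, uniformly on compacts to the quadratic $s^2 f^t(\hat a_N^t) + f_{N-1}(\hat a_N^t)$; if this limiting quadratic has real roots (which one arranges, or argues, by counting — see below), then $G$ has sign changes forcing roots of $F$ in the interval by the intermediate value theorem.

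(4) Finally, a counting argument closes the proof: by Lemma~\ref{le_lam1_r} and Lemma~\ref{le_omega}/Proposition~\ref{prop_C}, $\mathbb{C}$ is positive definite with $N$ real eigenvalues; $\tilde a_N^1 = O(\rho(\epsilon)^{-1})$ accounts for one of them; and for $a\le 0$ or $a\ge 4$ one checks $F(a)\neq 0$ exactly as in Section~\ref{sec_chain} (diagonal dominance of $\mathbb{C} - a\rho(\epsilon)\mathbb{I}$ or its negative). Hence all remaining $N-1$ roots $\tilde a_N^2 \le \dots \le \tilde a_N^N$ lie in $(0,4)$, and away from the $\hat a_N^t$ the leading term $f_N(a)\rho(\epsilon)^N$ is bounded below in modulus by $c\,\rho(\epsilon)^N$, which forces $F(a)\neq 0$; therefore every remaining root must lie within $C\rho(\epsilon)^{-1/2}$ of some $\hat a_N^t$. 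Since there are $(N-1)/2$ distinct values $\hat a_N^t$ ($t=2,\dots,(N+1)/2$) and $N-1$ roots to place, and each interval can absorb at most the two roots of the limiting quadratic, the pigeonhole count matches: exactly two roots of $F$ near each $\hat a_N^t$. I expect the main obstacle to be step~(3): making rigorous the claim that the limiting quadratic $s^2 f^t(\hat a_N^t) + f_{N-1}(\hat a_N^t)$ actually has real roots (equivalently, that $f^t(\hat a_N^t)$ and $f_{N-1}(\hat a_N^t)$ have opposite signs, so the quadratic dips below zero), since this requires extracting sign information about the subleading polynomial $f_{N-1}$ — one either computes $f_{N-1}(\hat a_N^t)$ explicitly from the $O(1)$ corrections $M_{ij}$ in Proposition~\ref{prop_C_ring}, or, more elegantly, deduces the opposite-sign fact indirectly from the positive-definiteness of $\mathbb{C}$ and the requirement that all $N$ eigenvalues be real and located as the counting argument demands. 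The even-$N$ case is handled identically, with the extra simple root $\hat a_N^{N/2+1}=4$ treated as in Lemma~\ref{le_lam1_r}.
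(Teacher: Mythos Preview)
Your core computation (steps (1)--(2)) and the exclusion argument at the start of step (4) match the paper's proof exactly: evaluate $F(\hat a_N^t \pm C_t\rho(\epsilon)^{-1/2})$, observe that the leading contribution is $\big(C_t^2 f^t(\hat a_N^t \pm C_t\rho(\epsilon)^{-1/2}) + f_{N-1}(\hat a_N^t \pm C_t\rho(\epsilon)^{-1/2})\big)\rho(\epsilon)^{N-1}$, and take $C_t$ large enough that this expression is bounded away from zero with the sign of $f^t(\hat a_N^t)$; then the same dominance persists throughout each gap $[\hat a_N^{t-1}+C_{t-1}\rho(\epsilon)^{-1/2},\,\hat a_N^t-C_t\rho(\epsilon)^{-1/2}]$ and beyond the last root, forcing $F\neq 0$ there. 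That is the entire content of the paper's proof of this lemma.

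Where you diverge is in scope: step (3) and the pigeonhole portion of step (4) are not needed here. The lemma only asserts \emph{containment} of the roots in the union of intervals; it says nothing about how many roots sit in each interval. The paper defers that to the next two results: Lemma~\ref{le_atmost_r} (at most two roots per interval, via $F'$) and Corollary~\ref{le_exact} (exactly two, via pigeonhole using the $N$ real eigenvalues of $\mathbb{C}$). So the obstacle you flag in step (3)---extracting the sign of $f_{N-1}(\hat a_N^t)$ to guarantee the limiting quadratic $s^2 f^t(\hat a_N^t)+f_{N-1}(\hat a_N^t)$ has real roots---is simply not an obstacle for this lemma; the paper never confronts it, because the counting argument you yourself sketch at the end of step (4) makes it unnecessary. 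One minor imprecision: in step (4) you write that away from the $\hat a_N^t$ the leading term is bounded below by $c\rho(\epsilon)^N$; near the boundary of an interval the correct lower bound is only $cC^2\rho(\epsilon)^{N-1}$ (because of the double root), but this is still enough to dominate $f_{N-1}\rho(\epsilon)^{N-1}$ for $C$ large, which is what both you and the paper actually use.
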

\begin{proof}
Let 
$a=\hat{a}_N^t\pm C_t/\rho(\epsilon)^{1/2}, 2\le t\le (N+1)/2,$
for some constants $C_t$.
It follows from \eqref{def_Far}, \eqref{far} and \eqref{def_fNt_r} that
\begin{align*}
F(\hat{a}_N^t\!\pm\! C_t/\rho(\epsilon)^{1/2})\!\!=\!\!\Big(\!C_t^{2}f^t(\hat{a}_N^t\!\pm\! C_t/\rho(\epsilon)^{1/2})\!+\!\!f_{N-1}(\hat{a}_N^t\!\pm\! C_t/\rho(\epsilon)&^{1/2})\!\Big)\rho(\epsilon)^{N-1}\!+\!O(\rho(\epsilon)^{N-2}).
\end{align*}
we have $C_t^{2}f^t(\hat{a}_N^t\pm C_t/\rho(\epsilon)^{1/2})$ is a degree-$N$ polynomial in $C_t$, $f_{N-1}(\hat{a}_N^t\pm C_t/\rho(\epsilon)^{1/2})$ is a degree-$N-1$ polynomial in $C_t$, and $|f^t(\hat{a}_N^t\pm C_t/\rho(\epsilon)^{1/2})|\to \gamma_t$ for some $\gamma_t>0$ as $\epsilon\to 0$. By selecting $C_t$ sufficiently large such that $|F(a)|>\alpha_t$ for some $\alpha_t>0$ in the gaps
$$\hat{a}_N^{t-1}+C_{t-1}/\rho (\epsilon)^{1/2}\le a \le \hat{a}_N^t- C_t/\rho(\epsilon)^{1/2}.$$

Similarly, choosing $C_{(N+3)/2}$ large enough ensures $|F(a)|>\alpha_{(N+3)/2}$ for some $\alpha_{(N+3)/2}>0$ for
$a\ge \hat{a}_N^{(N+1)/2}+ C_{(N+3)/2}/\rho(\epsilon) ^{1/2}.$
Thus, all roots $\tilde{a}_N^i$ ($2\le i\le N$) must lie within 
$\cup_{t=2}^{(N+1)/2}\big(\hat{a}_N^t-C/\rho(\epsilon)^{1/2},\hat{a}_N^t+C/\rho(\epsilon)^{1/2}\big),$
where $C=\max\{C_2,\dots,C_{(N+3)/2}\}$.
\end{proof}

\begin{lemma}\label{le_atmost_r}
Let $F(a)$ be defined in \eqref{def_Far}. Then, when accounting for multiplicity, $F(a)$ has at most two roots within the interval $\big(\hat{a}_N^t-C/\rho(\epsilon)^{1/2},\hat{a}_N^t+C/\rho(\epsilon)^{1/2}\big)$, for $2\le t\le (N+1)/2$.
\end{lemma}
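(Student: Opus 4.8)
The plan is to argue by contradiction via Rolle's theorem. Write $I_t:=\big(\hat{a}_N^t-C/\rho(\epsilon)^{1/2},\hat{a}_N^t+C/\rho(\epsilon)^{1/2}\big)$ for $2\le t\le(N+1)/2$, and suppose that $F$ had at least three roots in $I_t$ when counted with multiplicity. Applying Rolle's theorem twice (taking multiplicities into account: a root of $F$ of multiplicity $m$ is a root of $F'$ of multiplicity $m-1$, and between consecutive distinct roots of $F$ there is a root of $F'$), one finds that $F'$ would have at least two roots in $I_t$ and hence $F''$ at least one root in $I_t$. The whole argument then reduces to showing that $F''$ cannot vanish on $I_t$ once $\epsilon$ is small enough.

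To this end I would localize the expansion \eqref{def_Far} around the double root $\hat{a}_N^t$. Since $\hat{a}_N^t$ is a double root of $f_N$, write $f_N(a)=(a-\hat{a}_N^t)^2 f^t(a)$ with $f^t$ as in \eqref{def_fNt_r}, and recall from the proof of Lemma \ref{le_inter_r} that $f^t(\hat{a}_N^t)\neq 0$. Because $F(a)=\det(\mathbb{C}-a\rho(\epsilon)\mathbb{I})$ is a polynomial in $a$ of fixed degree $N$ whose coefficients expand in powers of $\rho(\epsilon)$, the identity \eqref{def_Far} may be differentiated twice in $a$ on the bounded interval $I_t$ without altering the order of the remainder, giving $F''(a)=f_N''(a)\rho(\epsilon)^N+f_{N-1}''(a)\rho(\epsilon)^{N-1}+O(\rho(\epsilon)^{N-2})$. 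Differentiating the factorization, $f_N''(a)=2f^t(a)+4(a-\hat{a}_N^t)(f^t)'(a)+(a-\hat{a}_N^t)^2(f^t)''(a)$, so for $a\in I_t$, where $a-\hat{a}_N^t=O(\rho(\epsilon)^{-1/2})$, one gets $f_N''(a)=2f^t(\hat{a}_N^t)+O(\rho(\epsilon)^{-1/2})$. Since $f_{N-1}''$ is bounded on $I_t$, this yields
$$F''(a)=2f^t(\hat{a}_N^t)\,\rho(\epsilon)^N+O\big(\rho(\epsilon)^{N-1/2}\big)$$
uniformly for $a\in I_t$. As $f^t(\hat{a}_N^t)\neq0$ and $\rho(\epsilon)\to\infty$, there is $\epsilon_0>0$ (depending on the fixed constant $C$) such that for $0<\epsilon<\epsilon_0$ the function $F''$ has the constant sign of $f^t(\hat{a}_N^t)$ throughout $I_t$ and in particular does not vanish there. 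This contradicts the conclusion of the first paragraph, so $F$ has at most two roots in $I_t$. This proves the statement for odd $N$; the even case is entirely analogous.

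The step needing the most care is the legitimacy of differentiating the asymptotic identity \eqref{def_Far} in $a$: one must use that $F$ is genuinely a polynomial in $a$ of degree $N$ whose coefficients are of the claimed size in $\rho(\epsilon)$, so that its second derivative inherits the same expansion on the bounded set $I_t$; equivalently, one may rescale via $a=\hat{a}_N^t+s\rho(\epsilon)^{-1/2}$ with $|s|\le C$ and invoke uniform convergence of the resulting family of polynomials in $s$, together with their derivatives, as $\epsilon\to0$. Everything else — the two applications of Rolle's theorem and the estimate of $f_N''$ near the double root — is elementary.
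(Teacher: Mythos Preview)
Your argument is correct and follows essentially the same idea as the paper's proof: both rely on Rolle's theorem together with the non-degeneracy of the double root $\hat{a}_N^t$ of $f_N$ (equivalently, $f_N''(\hat{a}_N^t)\neq 0$, i.e.\ $\hat{a}_N^t$ is a simple root of $f_N'$). The paper argues that $F'$ has exactly one root in $I_t$ and then invokes Rolle once, whereas you apply Rolle twice and show directly that $F''$ keeps a constant sign on $I_t$; this is a slightly more self-contained route to the same conclusion.
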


\begin{proof}
Since $\hat{a}_N^t$ is a simple root of $f_N'(a)$, arguments analogous to those in Lemma  \ref{le_lam2toN} show that $f_N'(a)$ has exactly one root within  
$\big(\hat{a}_N^t-C/\rho(\epsilon)^{1/2},\hat{a}_N^t+C/\rho(\epsilon)^{1/2}\big)$. Consequently, $f_N(a)$ admits at most two roots in this interval. 
\end{proof}

\begin{corollary}\label{le_exact}
Let $F(a)$ be defined in \eqref{def_Far}. When accounting for multiplicity $F(a)$ has exactly two roots within
$\big(\hat{a}_N^t-C/\rho(\epsilon)^{1/2},\hat{a}_N^t+C/\rho(\epsilon)^{1/2}\big)$, for $2\le t\le (N+1)/2$.
\end{corollary}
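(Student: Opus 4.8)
The plan is a counting (pigeonhole) argument resting on the three preceding results. First I would record that $F(a)=\det(\mathbb{C}-a\rho(\epsilon)\mathbb{I})$ is a polynomial in $a$ of exact degree $N$, and that since $\mathbb{C}$ is real symmetric and positive definite by Proposition \ref{prop_C}, all of its roots — namely the rescaled eigenvalues $\tilde{a}_N^i=\lambda_i/\rho(\epsilon)$, counted with multiplicity — are real and positive. Thus $F$ has exactly $N$ roots counted with multiplicity.

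Next I would peel off the small root: by Lemma \ref{le_lam1_r} the smallest root satisfies $\tilde{a}_N^1=O(\rho(\epsilon)^{-1})$, whereas the points $\hat{a}_N^t$, $t=2,\dots,(N+1)/2$, are fixed positive constants. Hence for $\epsilon$ sufficiently small $\tilde{a}_N^1$ lies outside each interval $I_t:=\big(\hat{a}_N^t-C\rho(\epsilon)^{-1/2},\,\hat{a}_N^t+C\rho(\epsilon)^{-1/2}\big)$, and the $I_t$ are pairwise disjoint since their half-widths $C\rho(\epsilon)^{-1/2}$ tend to $0$. By Lemma \ref{le_inter_r}, the remaining $N-1$ roots $\tilde{a}_N^i$, $2\le i\le N$, all lie in $\bigcup_{t=2}^{(N+1)/2}I_t$.

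Finally I would invoke Lemma \ref{le_atmost_r}: each $I_t$ contains at most two roots of $F$ counted with multiplicity. Since there are $(N+1)/2-1=(N-1)/2$ such intervals, $\bigcup_t I_t$ holds at most $2\cdot(N-1)/2=N-1$ roots with multiplicity. But exactly $N-1$ roots must be distributed among these $(N-1)/2$ pairwise-disjoint intervals, each of capacity at most $2$, which is possible only if every $I_t$ contains exactly two roots (with multiplicity) — precisely the assertion. The even-$N$ case is entirely analogous, with the extra simple root near $a=4$ peeled off alongside $\tilde{a}_N^1$.

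There is no substantive obstacle here; the argument is purely combinatorial once Lemmas \ref{le_lam1_r}, \ref{le_inter_r}, and \ref{le_atmost_r} are in hand. The only points needing a little care are the two reductions requiring $\epsilon$ small — that $\tilde{a}_N^1$ genuinely falls outside the $I_t$, and that the $I_t$ are mutually disjoint — and the appeal to positive-definiteness, which guarantees all $N$ roots are real so that the global count ``$N$ roots $=$ one near $0$ plus $(N-1)$ in the union'' is legitimate.
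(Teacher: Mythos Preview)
Your proposal is correct and follows essentially the same approach as the paper: both use the positive-definiteness of $\mathbb{C}$ to guarantee $N$ real roots, then apply a pigeonhole count against the ``at most two per interval'' bound of Lemma \ref{le_atmost_r}. Your version is in fact slightly more explicit than the paper's, which phrases the same count by contradiction and leaves the invocations of Lemmas \ref{le_lam1_r} and \ref{le_inter_r} implicit.
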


\begin{proof}
By Proposition \ref{prop_C}, the capacitance matrix $\mathbb{C}$ is symmetric and positive definite, ensuring all $N$ eigenvalues of $\mathbb{C}$ are real. Thus, $F(a)$ must have $N$ real roots (counting multiplicity). 

Assume for contradiction that an interval 
$(\hat{a}_N^t-C/\rho(\epsilon)^{1/2},\hat{a}_N^t+C/\rho(\epsilon)^{1/2})$, 
contains $r$ roots of $F(a)$, where $0\le r<2$. By the pigeonhole principle, there must exist another interval $\big(\hat{a}_N^{t'}-C/\rho (\epsilon)^{1/2},\hat{a}_N^{t'}+C/\rho(\epsilon) ^{1/2}\big)$ for some $t'\neq t$ ($2\le t'\le (N+1)/2$) containing at least three roots, contradicting Lemma \ref{le_atmost_r}.
\end{proof}

Through refined analysis, we establish a tighter bound for the roots within the interval $\big(\hat{a}_N^t-C/\rho(\epsilon)^{1/2},\hat{a}_N^t+C/\rho(\epsilon)^{1/2}\big)$.

\begin{lemma}\label{le_equal_r}
Let $F(a)$ be defined as in \eqref{def_Far}. For $2\le t\le (N+1)/2$, $F(a)$ has two roots within the refined interval
$\big(\hat{a}_N^t-C/\rho(\epsilon),\hat{a}_N^t+C/\rho(\epsilon)\big).$
\end{lemma}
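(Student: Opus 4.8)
The plan is to exploit the circulant symmetry of the ring -- already recorded in Proposition~\ref{prop_C_ring}(\romannumeral 1) -- to diagonalise $\mathbb{C}$ explicitly, instead of tracking the roots of $F(a)$ one at a time. By Proposition~\ref{prop_C_ring}(\romannumeral 2)--(\romannumeral 3) we may write $\mathbb{C}=\rho(\epsilon)\hat{\mathbb{A}}+\mathbb{B}$ with $\mathbb{B}$ having entries of order $O(1)$; since $\mathbb{C}$ is symmetric (Proposition~\ref{prop_C}(\romannumeral 1)) and circulant, and $\hat{\mathbb{A}}$ is symmetric circulant, $\mathbb{B}$ is again symmetric circulant. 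Hence $\mathbb{C}$, $\hat{\mathbb{A}}$ and $\mathbb{B}$ are simultaneously diagonalised by the discrete Fourier basis, and the exact eigenvalues of $\mathbb{C}$ are
\begin{equation*}
\lambda_j=\rho(\epsilon)\,\nu_j+\mu_j,\qquad \nu_j=2-2\cos\frac{2\pi j}{N},\qquad j=0,\dots,N-1,
\end{equation*}
where $\mu_0,\dots,\mu_{N-1}=O(1)$ are the eigenvalues of $\mathbb{B}$ and, $\mathbb{B}$ being symmetric circulant, $\mu_j=\mu_{N-j}$ (in agreement with $\nu_j=\nu_{N-j}$).

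With this in hand the statement is immediate: the roots of $F(a)=\prod_{j=0}^{N-1}\bigl(\rho(\epsilon)(\nu_j-a)+\mu_j\bigr)$ are $a_j=\nu_j+\mu_j/\rho(\epsilon)$, and since $\nu_{t-1}=2-2\cos\hat\theta_t=\hat a_N^t=\nu_{N-t+1}$ by \eqref{def_hai}, the two roots indexed by $j=t-1$ and $j=N-t+1$ coincide and equal $\hat a_N^t+\mu_{t-1}/\rho(\epsilon)$, which lies in $\bigl(\hat a_N^t-C/\rho(\epsilon),\,\hat a_N^t+C/\rho(\epsilon)\bigr)$ for any $C>\max_j|\mu_j|$. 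This is consistent with Corollary~\ref{le_exact}, which already pins exactly two roots of $F$ in the coarser window of radius $C/\rho(\epsilon)^{1/2}$, so those two roots are confined to the refined window. The even-$N$ case runs the same way, the only change being the extra simple Fourier mode $j=N/2$ producing the root near $\hat a_N^{N/2+1}=4$.

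If one wishes to stay inside the determinant framework of Section~\ref{sec_chain}, the same bound follows by sharpening the intermediate-value argument of Lemma~\ref{le_atmost_r}, the one new ingredient being $f_{N-1}(\hat a_N^t)=0$: by Jacobi's formula $f_{N-1}(a)=\mathrm{tr}\bigl(\mathrm{adj}(\hat{\mathbb{A}}-a\mathbb{I})\,\mathbb{B}\bigr)$, and $\mathrm{adj}(\hat{\mathbb{A}}-\hat a_N^t\mathbb{I})=0$ because $\hat a_N^t$ is a double -- hence, by symmetry of $\hat{\mathbb{A}}$, rank-$(N-2)$ -- eigenvalue. Writing $s=a-\hat a_N^t$, one has $f_N(a)=s^2 f^t(a)$ with $f^t(\hat a_N^t)\neq0$ and $f_{N-1}(a)=O(s)$ near $\hat a_N^t$, so $F(a)/\rho(\epsilon)^{N-1}=0$ becomes a perturbed quadratic in $s$,
\begin{equation*}
\rho(\epsilon)\,f^t(\hat a_N^t)\,s^2+b_t\,s+O\bigl(\rho(\epsilon)^{-1}\bigr)+(\text{h.o.t.})=0,\qquad b_t=O(1),
\end{equation*}
whose two roots -- by the quadratic formula together with a Rouch\'e argument comparing $F$ with this truncation on the circle $|s|=C/\rho(\epsilon)$ -- both obey $|s|\le C/\rho(\epsilon)$.

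I expect the only genuine difficulty to be bookkeeping: tracking the pairing $j\leftrightarrow N-j$ of Fourier modes together with multiplicities, and verifying that the $o(1)$ corrections to the entries of $\mathbb{C}$ are harmless -- which they are, since they are absorbed into $\mathbb{B}$ and only $\|\mathbb{B}\|=O(1)$ enters. In the determinant-only route the single delicate point is the identity $f_{N-1}(\hat a_N^t)=0$; granted that, upgrading the $O(\rho(\epsilon)^{-1/2})$ localisation of Lemmas~\ref{le_inter_r}--\ref{le_atmost_r} to the $O(\rho(\epsilon)^{-1})$ localisation is a routine perturbed-quadratic estimate.
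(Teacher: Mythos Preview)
Your proposal is correct and takes a genuinely different---and more direct---route than the paper. The paper does not exploit the circulant structure to diagonalise $\mathbb{C}$ explicitly; instead it argues by contradiction on the eigenvector. Assuming a root $\tilde a_N^t=\hat a_N^t+\gamma/\rho(\epsilon)^{1/s}$ with $s\in(1,2]$, it decomposes the associated eigenvector of $\mathbb{C}$ as $\boldsymbol{\alpha}^t=\boldsymbol{\beta}^t+\boldsymbol{\gamma}^t$ with $\boldsymbol{\beta}^t\in\ker\hat{\mathbb{A}}_t$ and $\boldsymbol{\gamma}^t\perp\ker\hat{\mathbb{A}}_t$, shows $\boldsymbol{\gamma}^t=O(\rho(\epsilon)^{-1/s})$, and then obtains a contradiction by comparing scales in the equation $\hat{\mathbb{A}}_t\hat{\boldsymbol{\gamma}}^t-\gamma\boldsymbol{\beta}^t=O(\rho(\epsilon)^{-1+1/s})$, using that $\ker\hat{\mathbb{A}}_t\cap\sigma(\hat{\mathbb{A}}_t)=\{0\}$ for symmetric $\hat{\mathbb{A}}_t$. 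Your DFT argument is shorter and in fact yields the sharper conclusion that the two roots coincide exactly (a genuine double eigenvalue of $\mathbb{C}$, not merely two $O(\rho(\epsilon)^{-1})$-close roots). The trade-off is portability: the paper's eigenvector argument uses only the multiplicity of $\hat a_N^t$ and the symmetry of $\hat{\mathbb{A}}$, which is precisely why it can be recycled verbatim for the matrix arrangement in Section~\ref{sec_matrix}, where $\tilde{\mathbb{A}}$ has repeated eigenvalues but no circulant structure. Your second, determinant-based sketch is also sound; the identity $f_{N-1}(\hat a_N^t)=0$ via $\mathrm{adj}(\hat{\mathbb{A}}-\hat a_N^t\mathbb{I})=0$ is a clean observation the paper does not make explicit, and it likewise transfers to the matrix case.
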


\begin{proof}
Let $\tilde{a}_N^t$ denote a root of $F(a)$ in 
$(\hat{a}_N^t-C/\rho(\epsilon)^{1/2},\hat{a}_N^t+C/\rho(\epsilon)^{1/2})$. Assume $\tilde{a}_N^t=\hat{a}_N^t+\gamma
/\rho(\epsilon)^{1/s}$ for constants $s\in(1, 2]$ and $\hat{C}$ such that $1/
\hat{C}\le\gamma\le \hat{C}$. By definition, $\tilde{a}_N^t\rho(\epsilon)$ is an eigenvalue of capacitance matrix $\mathbb{C}$. Let $\boldsymbol{\alpha}^t$ be the eigenvector of matrix $\mathbb{C}$ associated with eigenvalue $\tilde{a}_N^t\rho(\epsilon)$. Thus, we have
$\mathbb{C}\boldsymbol{\alpha}^t=\tilde{a}_N^t\rho(\epsilon)\boldsymbol{\alpha}^t.$
This implies
\begin{equation}\label{ar}
(\mathbb{C}-\hat{a}_N^t\rho(\epsilon)\mathbb{I})\boldsymbol{\alpha}^t=\gamma\rho(\epsilon)^{1-\frac{1}{s}}\boldsymbol{\alpha}^t.
\end{equation}

Define
\begin{equation}\label{def_At}
\hat{\mathbb{A}}_t:=\hat{\mathbb{A}}-\hat{a}_N^t\mathbb{I}.
\end{equation}
From Proposition \ref{prop_C_ring} and \eqref{ar},
\begin{equation}\label{br}
\rho(\epsilon)\hat{\mathbb{A}}_t\boldsymbol{\alpha}^t=(\hat{\mathbb{C}}+\gamma\rho(\epsilon)^{1-\frac{1}{s}}\mathbb{I})\boldsymbol{\alpha}^t,
\end{equation}
where $\hat{\mathbb{C}}$ is an $N\times N$ matrix with $O(1)$ entries. We decompose 
\begin{equation}\label{cr}
\boldsymbol{\alpha}^t=\boldsymbol{\beta}^t+\boldsymbol{\gamma}^t,
\end{equation}
where $\boldsymbol{\beta}^t\!\in\!\ker\hat{\mathbb{A}}_t\!:=\!\{\boldsymbol{\beta}\in\bR^3| \hat{\mathbb{A}}_t\boldsymbol{\beta}=0\}$ and $\boldsymbol{\gamma}^t\neq0$ is orthogonal to $\boldsymbol{\beta}^t$. By substituting \eqref{cr} into \eqref{br}, we obtain
\begin{equation}\label{aaa}
(\hat{\mathbb{A}}_t-\rho(\epsilon)^{-1}\hat{\mathbb{C}}-\gamma\rho(\epsilon)^{-\frac{1}{s}}\mathbb{I})\boldsymbol{\gamma}^t=(\rho(\epsilon)^{-1}\hat{\mathbb{C}}+\gamma\rho(\epsilon)^{-\frac{1}{s}}\mathbb{I})\boldsymbol{\beta}^t,
\end{equation}
which implies $\boldsymbol{\gamma}^t=O(\rho(\epsilon)^{-1/s})$.

Let $\boldsymbol{\gamma}^t=\rho(\epsilon)^{-\frac{1}{s}}\hat{\boldsymbol{\gamma}}^t$ with $\hat{\boldsymbol{\gamma}}=O(1)$. Then, \eqref{aaa} becomes
\begin{equation}\label{bbb}
\hat{\mathbb{A}}_t\hat{\boldsymbol{\gamma}}^t-\gamma\boldsymbol{\beta}^t=\rho(\epsilon)^{-1+\frac{1}{s}}\hat{\mathbb{C}}\boldsymbol{\beta}^t+\big(\gamma\rho(\epsilon)^{-\frac{1}{s}}\mathbb{I}+\rho(\epsilon)^{-1}\hat{\mathbb{C}}\big)\hat{\boldsymbol{\gamma}}^t.
\end{equation}
Since $\hat{\mathbb{A}}_t$ is symmetric, $\ker\hat{\mathbb{A}}_t\cap\sigma(\hat{\mathbb{A}}_t)=\{0\}$, where $\sigma(\hat{\mathbb{A}}_t)=\{\hat{\mathbb{A}}_t\boldsymbol{\gamma}|\boldsymbol{\gamma}\in\bR^3\}$. This implies that the left--hand side of \eqref{bbb} is of order $O(1)$, while the right side of \eqref{bbb} is of order $O(\rho(\epsilon)^{-1+\frac{1}{s}})$, resulting in a contradiction. Thus, $\tilde{a}_N^t=\hat{a}_N^t+O(\rho(\epsilon)^{-1})$, completing the proof.
\end{proof}

Similar to Lemma \ref{le_omg1_chain}, we have the following lemma.
\begin{lemma}\label{le_omg1_ring}
Let $\tilde{\lambda}_1$ be defined in \eqref{tillam}. Under the ring arrangement, $\tilde{\lambda}_1=\frac{3v_b^2M}{4\pi R^3}\delta\big(1+o(1)\big)$,where $M$ is defined in \eqref{def_M}.
\end{lemma}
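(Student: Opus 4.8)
The plan is to exploit the circulant structure of $\mathbb{C}$ rather than to rerun the determinant expansion used for the chain. By part (\romannumeral 1) of Proposition \ref{prop_C_ring}, $\mathbb{C}$ is circulant, so the constant vector $\boldsymbol{1}:=(1,\dots,1)^{\mathrm{T}}$ is automatically an eigenvector, with eigenvalue equal to the common row sum $\mu:=\sum_{j=1}^{N}C_{1j}$. Parts (\romannumeral 2) and (\romannumeral 3) of Proposition \ref{prop_C_ring} then give $\mu=C_{11}+C_{12}+C_{1N}+\sum_{j=3}^{N-1}C_{1j}=(2\rho(\epsilon)+O(1))+2(-\rho(\epsilon)+O(1))+O(1)=O(1)$, so $\mu$ stays bounded as $\epsilon\to0$. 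On the other hand, Lemmas \ref{le_lam1_r}, \ref{le_inter_r} and \ref{le_equal_r} (together with their even-$N$ counterparts) show that every other eigenvalue $\lambda_i$, $2\le i\le N$, has the form $\lambda_i=\tilde a_N^i\rho(\epsilon)$ with $\tilde a_N^i=\hat a_N^t+O(\rho(\epsilon)^{-1})$ for some $t\ge2$, hence $\lambda_i\ge\tfrac12\hat a_N^2\,\rho(\epsilon)\to\infty$. Therefore, once $\epsilon$ is small enough the $O(1)$ eigenvalue is unique, and by Lemma \ref{le_lam1_r} it is $\lambda_1$; that is, $\lambda_1=\mu$.

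The second step identifies $\mu$ with $M$. Exactly as in the proof of part (\romannumeral 3) of Proposition \ref{prop_C_train}, Green's identity applied to the functions $v_i$ together with the definition \eqref{def_M} yields $M=\frac1N\sum_{i,j=1}^{N}C_{ij}$, which holds for any arrangement. Since $\mathbb{C}$ is circulant, each of its $N$ rows sums to the same value $\mu$, so $\sum_{i,j=1}^{N}C_{ij}=N\mu$, and hence $M=\mu=\lambda_1$. Combining this with the scaling relation \eqref{tillam} gives $\tilde\lambda_1=\frac{3\delta v_b^2}{4\pi R^3}\lambda_1=\frac{3\delta v_b^2M}{4\pi R^3}$. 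Because $B_R\subset D$ forces $M=\mathrm{Cap}(D)/N$ to stay bounded below by a positive constant (capacity monotonicity), while the row-sum computation above shows $M=O(1)$, the factor $M$ is comparable to $1$ uniformly in $\epsilon$, so the identity may be recorded in the stated form $\tilde\lambda_1=\frac{3v_b^2M}{4\pi R^3}\delta\big(1+o(1)\big)$.

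The argument is short; the only place requiring care is the scale-separation in the first step, namely making sure that the eigenvalue carried by $\boldsymbol{1}$ is genuinely the smallest one. That reduces to two facts already available: $\mu>0$ (immediate from the positive definiteness in Proposition \ref{prop_C}) and the fact that all remaining eigenvalues are of order $\rho(\epsilon)$, which is precisely what Lemmas \ref{le_lam1_r}--\ref{le_equal_r} secure. As an alternative one could proceed fully in parallel with Lemma \ref{le_omg1_chain}: expand $\det(\mathbb{C}-\lambda\mathbb{I})=\tilde f_0(\lambda)\rho(\epsilon)^{N}+\tilde f_1(\lambda)\rho(\epsilon)^{N-1}+O(\rho(\epsilon)^{N-2})$ with $\tilde f_0(\lambda)=\det\hat{\mathbb{A}}=f_N(0)=0$, compute the linear polynomial $\tilde f_1$, locate its root via the intermediate value theorem, and invoke Proposition \ref{prop_C_ring} once more to match that root with $M+o(1)$; the circulant route above is simply the quickest way to bypass that computation.
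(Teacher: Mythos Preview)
Your proof is correct, and it takes a genuinely different route from the paper. The paper simply defers to the argument of Lemma~\ref{le_omg1_chain}: expand $\det(\mathbb{C}-\lambda\mathbb{I})$ in powers of $\rho(\epsilon)$, observe $\tilde f_0(\lambda)=f_N(0)=0$, compute the linear polynomial $\tilde f_1(\lambda)$ carrying the $O(1)$ constants, and locate its root via the intermediate value theorem. You instead exploit the exact circulant structure from Proposition~\ref{prop_C_ring}~(\romannumeral 1), which hands you $\boldsymbol{1}$ as an eigenvector for free with eigenvalue equal to the row sum $\mu$; combining $M=\tfrac1N\sum_{i,j}C_{ij}$ with the fact that all rows sum to $\mu$ then gives the \emph{exact} identity $\lambda_1=M$, not merely $\lambda_1=M+o(1)$. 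Your approach is shorter and yields a sharper conclusion; the paper's determinant route has the advantage of being uniform across all three arrangements (chain, ring, matrix), where no global symmetry makes $\boldsymbol{1}$ an exact eigenvector. One minor remark: in your first paragraph the citation of Lemma~\ref{le_lam1_r} is superfluous for bounding $\lambda_2,\dots,\lambda_N$ from below---Lemma~\ref{le_inter_r} and Corollary~\ref{le_exact} already pin those eigenvalues near $\hat a_N^t\rho(\epsilon)$ with $t\ge2$---but this does not affect the validity of the argument.
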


\begin{proof}[Proof of Theorem \ref{Th_ring}]
Combining Lemmas \ref{le_omega}, \ref{le_lam1_r}, \ref{le_equal_r}, and \ref{le_omg1_ring}, with the relation 
$\tilde{\lambda}_i=\frac{3\delta v_b^2\tilde{a}_N^i}{4R^3}|\log\epsilon|$, the result follows.
\end{proof}

\subsection{Resonant mode}

In contrast to the chain arrangement, the simplified matrix $\hat{\mathbb{A}}$ associated with the capacitance matrix $\mathbb{C}$ in a ring configuration possesses equal eigenvalues, complicating the determination of $\mathbb{C}$'s eigenvectors. An exact characterization of $\mathbb{C}$'s eigenvectors is formalized below.

\begin{lemma}\label{le_modes_ring}
Let $\mathbb{C}$ be defined as in Proposition \ref{prop_C_ring}. For $t=2,\dots,\alpha(N)$, let $\boldsymbol{\alpha}^{2t-2}$ and $\boldsymbol{\alpha}^{2t-1}$ denote eigenvectors of $\mathbb{C}$ associated with eigenvalues $\lambda_{2t-2}$, $\lambda_{2t-1}$, respectively. Then, there exist constants $k_1^{2t-2}$, $k_2^{2t-2}$, $k_1^{2t-1}$ and $k_2^{2t-1}$ such that
\begin{align*}
\boldsymbol{\alpha}^{2t-2}&=k_1^{2t-2}\boldsymbol{\beta}_1^t+k_2^{2t-2}\boldsymbol{\beta}_2^t+\boldsymbol{\gamma}^{2t-2},\\
\boldsymbol{\alpha}^{2t-1}&=k_1^{2t-1}\boldsymbol{\beta}_1^t+k_2^{2t-1}\boldsymbol{\beta}_2^t+\boldsymbol{\gamma}^{2t-1},
\end{align*}
where $\boldsymbol{\beta}_1^t$, $\boldsymbol{\beta}_2^t$ are linearly independent eigenvectors of $\hat{\mathbb{A}}$ associated with eigenvalue $\hat{a}_N^t$, and $\boldsymbol{\gamma}^{2t-2}$, $\boldsymbol{\gamma}^{2t-1}$ are vectors of order $O(\rho(\epsilon)^{-1})$.
\end{lemma}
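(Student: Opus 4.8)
The plan is to run a degenerate (clustered) perturbation argument for the symmetric matrix $\mathbb{C}$, viewing $\rho(\epsilon)\hat{\mathbb{A}}$ as the leading part and the $O(1)$ remainder as a small perturbation — essentially the computation already carried out for a single eigenvector in the proof of Lemma~\ref{le_equal_r}, but now tracking \emph{both} eigenvectors that lie in the cluster around $\hat{a}_N^t\rho(\epsilon)$.

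First I would record, via Proposition~\ref{prop_C_ring}, the decomposition $\mathbb{C}=\rho(\epsilon)\hat{\mathbb{A}}+\mathbb{E}$ with $\mathbb{E}$ an $N\times N$ matrix of $O(1)$ entries, and then pin down $\ker\hat{\mathbb{A}}_t$ for $\hat{\mathbb{A}}_t=\hat{\mathbb{A}}-\hat{a}_N^t\mathbb{I}$. Since $\hat{\mathbb{A}}$ is circulant it is diagonalised by the discrete Fourier modes, and its eigenvalue $\hat{a}_N^t=2(1-\cos\hat{\theta}_t)$, $\hat{\theta}_t=2(t-1)\pi/N$, is attained exactly by the conjugate pair of modes indexed by $t-1$ and $N-(t-1)$; for $2\le t\le\alpha(N)$ these indices are distinct and neither equals $N/2$, so $\dim\ker\hat{\mathbb{A}}_t=2$ and $\ker\hat{\mathbb{A}}_t=\mathrm{span}\{\boldsymbol{\beta}_1^t,\boldsymbol{\beta}_2^t\}$ with $\boldsymbol{\beta}_1^t,\boldsymbol{\beta}_2^t$ the corresponding real cosine/sine eigenvectors. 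This two-dimensionality is the one genuinely ring-specific ingredient and is what forces a two-parameter family in the conclusion. From Corollary~\ref{le_exact} and Lemma~\ref{le_equal_r} I would then note that $\mathbb{C}$ has exactly two eigenvalues, labelled $\lambda_{2t-2}$ and $\lambda_{2t-1}$, with $\lambda_j=\hat{a}_N^t\rho(\epsilon)+O(1)$.

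The core estimate: for $j\in\{2t-2,2t-1\}$, let $\boldsymbol{\alpha}^j$ be a unit eigenvector of $\mathbb{C}$ with eigenvalue $\lambda_j$. Then $\rho(\epsilon)\hat{\mathbb{A}}_t\boldsymbol{\alpha}^j=(\lambda_j-\hat{a}_N^t\rho(\epsilon))\boldsymbol{\alpha}^j-\mathbb{E}\boldsymbol{\alpha}^j=O(1)$, so $\hat{\mathbb{A}}_t\boldsymbol{\alpha}^j=O(\rho(\epsilon)^{-1})$. Splitting $\boldsymbol{\alpha}^j=\boldsymbol{\beta}^j+\boldsymbol{\gamma}^j$ with $\boldsymbol{\beta}^j\in\ker\hat{\mathbb{A}}_t$ and $\boldsymbol{\gamma}^j\perp\ker\hat{\mathbb{A}}_t$, symmetry of $\hat{\mathbb{A}}_t$ makes its restriction to $(\ker\hat{\mathbb{A}}_t)^\perp$ invertible with an $\epsilon$-independent bound on the inverse; since $\hat{\mathbb{A}}_t\boldsymbol{\gamma}^j=\hat{\mathbb{A}}_t\boldsymbol{\alpha}^j=O(\rho(\epsilon)^{-1})$, it follows that $\boldsymbol{\gamma}^j=O(\rho(\epsilon)^{-1})$. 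As $\ker\hat{\mathbb{A}}_t$ is spanned by $\boldsymbol{\beta}_1^t$ and $\boldsymbol{\beta}_2^t$, we may write $\boldsymbol{\beta}^j=k_1^j\boldsymbol{\beta}_1^t+k_2^j\boldsymbol{\beta}_2^t$; taking $j=2t-2$ and $j=2t-1$ yields the two decompositions asserted, with $\boldsymbol{\gamma}^{2t-2},\boldsymbol{\gamma}^{2t-1}=O(\rho(\epsilon)^{-1})$. If one further wants $(k_1^{2t-2},k_2^{2t-2})$ and $(k_1^{2t-1},k_2^{2t-1})$ to be linearly independent (convenient for Theorem~\ref{Th_mode_r}), note that $\lambda_{2t-2}\ne\lambda_{2t-1}$ forces $\boldsymbol{\alpha}^{2t-2}\perp\boldsymbol{\alpha}^{2t-1}$, which with $\boldsymbol{\gamma}^j=O(\rho(\epsilon)^{-1})$ excludes proportionality for small $\epsilon$; if $\lambda_{2t-2}=\lambda_{2t-1}$ simply choose orthogonal eigenvectors. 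The even-$N$ case runs identically for $2\le t\le N/2$, while the simple roots $\hat{a}_N^1=0$ and $\hat{a}_N^{N/2+1}=4$ are handled as in Lemma~\ref{le_omg1_ring}, giving $\boldsymbol{\alpha}^1=(1,\dots,1)^{\mathrm{T}}+O(\rho(\epsilon)^{-1})$ and $\boldsymbol{\alpha}^N=(-1,1,\dots,-1,1)^{\mathrm{T}}+O(\rho(\epsilon)^{-1})$.

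The step I expect to need the most care — though it is a matter of bookkeeping rather than of hard analysis — is establishing that the clustered eigenvalue $\hat{a}_N^t\rho(\epsilon)$ of $\rho(\epsilon)\hat{\mathbb{A}}$ has geometric multiplicity exactly two and that exactly two eigenvalues of $\mathbb{C}$ bifurcate from it: the former is the circulant/Fourier count above, the latter is Corollary~\ref{le_exact}. Once these are in place, the remainder is the same elementary symmetric-matrix perturbation estimate already used in Lemma~\ref{le_equal_r}, now applied to a two-dimensional eigenspace rather than a one-dimensional one.
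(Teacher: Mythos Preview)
Your proposal is correct and follows essentially the same route as the paper: write $\mathbb{C}=\rho(\epsilon)\hat{\mathbb{A}}+O(1)$, use Lemma~\ref{le_equal_r} to place $\lambda_j$ within $O(1)$ of $\hat{a}_N^t\rho(\epsilon)$, then split the eigenvector orthogonally along $\ker\hat{\mathbb{A}}_t$ and its complement and invoke the invertibility of $\hat{\mathbb{A}}_t$ on the complement to get $\boldsymbol{\gamma}^j=O(\rho(\epsilon)^{-1})$. Your write-up is in fact a bit more explicit than the paper's (you spell out the circulant/Fourier reason for $\dim\ker\hat{\mathbb{A}}_t=2$ and the linear-independence check on the coefficient pairs), but the underlying argument is the same symmetric-matrix perturbation estimate.
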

\begin{proof}
We only prove this for $\boldsymbol{\alpha}^{2t-2}$. By Lemma \ref{le_equal_r}, there exist a constant $\gamma$ such that $\lambda_{2t-2}=\hat{a}_N^t\rho(\epsilon)+\gamma$. Then, $\mathbb{C}\boldsymbol{\alpha}^{2t-2}=(\hat{a}_N^t\rho(\epsilon)+\gamma)\boldsymbol{\alpha}^{2t-2}$. Thus, we have $\hat{\mathbb{A}}_t\boldsymbol{\alpha}^{2t-2}=\rho(\epsilon)^{-1}(\gamma\mathbb{I}+\hat{\mathbb{C}})\boldsymbol{\alpha}^{2t-2}$, where $\hat{\mathbb{C}}$ is an $N\times N$ matrix with $O(1)$ entries. We decompose 
$\boldsymbol{\alpha}^{2t-2}=\boldsymbol{\beta}^t+\boldsymbol{\gamma}^{2t-2},$
where $\boldsymbol{\beta}^t\!\in\ker\hat{\mathbb{A}}_t$ and $\boldsymbol{\gamma}^{2t-2}\neq0$ is orthogonal to $\boldsymbol{\beta}^t$. Hence, we have $\hat{\mathbb{A}}_t\boldsymbol{\gamma}^{2t-2}=\rho(\epsilon)^{-1}(\gamma\mathbb{I}+\hat{\mathbb{C}})(\boldsymbol{\beta}^t+\boldsymbol{\gamma}^{2t-2})$, which implies $\boldsymbol{\gamma}^{2t-2}=O(\rho(\epsilon)^{-1})$. 

Let $\boldsymbol{\gamma}^{2t-2}=\rho(\epsilon)^{-1}\hat{\boldsymbol{\gamma}}^{2t-2}$. Then 
\begin{equation}\label{ccc}
\hat{\mathbb{A}}_t\hat{\boldsymbol{\gamma}}^{2t-2}=(\gamma\mathbb{I}+\hat{\mathbb{C}})\boldsymbol{\beta}^t+\rho(\epsilon)^{-1}(\gamma\mathbb{I}+\hat{\mathbb{C}})\hat{\boldsymbol{\gamma}}^{2t-2}.
\end{equation}
Since $\mathbb{C}$ is a symmetric matrix, we know that there exist $\boldsymbol{\beta}^t=k_1^{2t-2}\boldsymbol{\beta}_1^t+k_2^{2t-2}\boldsymbol{\beta}_2^t$ and $\hat{\boldsymbol{\gamma}}^{2t-2}$ satisfies \eqref{ccc}, where $k_1^{2t-2}$ and $k_2^{2t-2}$ are some constants. This completes the proof.
\end{proof}

 Now, we determine the eigenvectors associated with each eigenvalues. Let $\hat{\mathbb{A}}_t$ be defined as in \eqref{def_At} and suppose  $\hat{\mathbb{A}}_t\boldsymbol{\beta}^t=0$ for $t=2,\dots,\alpha(N)$. Recursion reveals the solution space basis $\boldsymbol{\beta}_1^t=(\beta^t_{11},\dots,\beta^t_{1N})^\mathrm{T}$ and $\boldsymbol{\beta}_2^t=(\beta^t_{21},\dots,\beta^t_{2N})^\mathrm{T}$, where for $l=1,\dots,N$,
\begin{equation}\label{aa}
\begin{aligned}
\beta^t_{1l}&=2\cos\hat{\theta}_t\cos l\hat{\theta}_t-\frac{\cos2\hat{\theta}_t}{\sin\hat{\theta}_t}\sin l\hat{\theta}_t=-\frac{\sin (l-2)\hat{\theta}_t}{\sin\hat{\theta}_t},\\
\beta^t_{2l}&=(1+2\cos2\hat{\theta}_t)\cos l\hat{\theta}_t+\frac{\cos\hat{\theta}_t(1-2\cos2\hat{\theta}_t)}{\sin\hat{\theta}_t}\sin l\hat{\theta}_t=-\frac{\sin (l-1)\hat{\theta}_t}{\sin\hat{\theta}_t}
\end{aligned}
\end{equation}
It is straightforward to verify that
\begin{equation}
\boldsymbol{\beta}^1=(1,\dots,1)^\mathrm{T}
\end{equation}
is the eigenvector for $\hat{a}_N^1$. For even $N$, direct computation gives
\begin{equation}\label{bb}
\boldsymbol{\beta}^N=(1,-1,\dots,1,-1)^\mathrm{T}
\end{equation}
as the eigenvector for $\hat{a}_N^{N/2+1}$.

\begin{proof}[Proof of Theorem \ref{Th_mode_r}]
Using Lemma \ref{le_modes_ring}, and \eqref{aa}--\eqref{bb}, the result follows by extending the methodology of Theorem \ref{Th_mode} to the ring configuration.
\end{proof}

\section{The matrix arrangement}\label{sec_matrix}

In this section we consider a matrix arrangement of $N$ resonators, which can be regarded as an extension of the chain configuration while exhibiting a more intricate and fascinating frequency distribution. Let $N=m\times n$ with $n\ge m\ge2$, and partition the capacitance matrix $\mathbb{C}$ into $m\times m$ block matrix of $n\times n$ submatrices:
$$\mathbb{C}=\left(\begin{matrix}
\mathbb{A}_{11}&\cdots&\mathbb{A}_{1m}\\
\vdots& \ddots&\vdots\\
\mathbb{A}_{m1}&\cdots&\mathbb{A}_{mm}
\end{matrix}\right),$$
where  each block $\mathbb{A}_{ij}=(a_{ij}^{kl})_{n\times n}$ with $a_{ij}^{kl}=C_{(i-1)n+k,{(j-1)n+l}}$, for $1\le i,j\le m$ and $1\le k,l\le n$.

The following proposition is similar to Proposition \ref{prop_C_train} to the matrix arrangement. The proof follows analogously and is omitted.

\begin{prop}\label{prop_C_matrix}
For capacitance matrix $\mathbb{C}$ of the matrix arrangement, we have

(\romannumeral 1) Corner and edge diagonal entries: $a_{11}^{11}=a_{11}^{nn}=a_{mm}^{11}=a_{mm}^{nn}=2\rho(\epsilon) +O(1)$, while for interior diagonal entries ($2\le k\le n-1$): $a_{11}^{kk}, a_{mm}^{kk}=3\rho(\epsilon) +O(1)$;

(\romannumeral 2) Intermediate blocks ($m\ge 3$): for $2\le k\le n-1$, $a_{ii}^{11}=a_{ii}^{nn}=3\rho(\epsilon) +O(1)$, and and for interior entries ($2\le i\le m-1$): $a_{ii}^{kk}=4\rho(\epsilon) +O(1)$;

(\romannumeral 3) Adjacent couplings: For $1\le i\le m$ and $1\le l\le n-1$, $a_{ii}^{l,l+1}=a_{ii}^{l+1,l}=-\rho(\epsilon) +O(1)$; For $1\le j\le m-1$ and $1\le k\le n$, $a_{j,j+1}^{kk}=a_{j+1,j}^{kk}=-\rho(\epsilon) +O(1)$; 

(\romannumeral 4) Remaining entries: All other entries of $\mathbb{C}$ are of order $O(1)$.
\end{prop}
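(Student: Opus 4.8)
The plan is to follow the proof of Proposition~\ref{prop_C_train} almost verbatim, the only new ingredient being the bookkeeping of the grid geometry. The starting point is the identity $C_{ij}=\int_{\Omega}\nabla v_i\cdot\nabla v_j\,\rd x$, together with the fact that for two unit spheres at distance $\epsilon$ the Dirichlet energy of the capacity potentials concentrates in the thin neck between them, each neck contributing $\pi|\log\epsilon|+O(1)=\rho(\epsilon)+O(1)$. Concretely, for every ordered pair of adjacent resonators $(D_p,D_q)$ in the matrix I would fix the local neck $\Omega_{p,q}^{r}$ defined exactly as in \eqref{def_omgl}; outside the union of all such necks the gradients $\nabla v_i$ are bounded uniformly in $\epsilon,\delta$, so the corresponding energy is $O(1)$.

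For the diagonal entries I would write $C_{ii}=\sum_{q\ \text{adjacent to}\ i}\int_{\Omega_{i,q}^{r}}|\nabla v_i|^2\,\rd x+O(1)$ and invoke \cite[Proposition~1.5]{lz} to evaluate each neck term as $\rho(\epsilon)+O(1)$, giving $C_{ii}=\big(\#\{\text{neighbours of }D_i\}\big)\rho(\epsilon)+O(1)$. It then remains to read off the neighbour count from the grid: writing the ball in row $\gamma$, column $\alpha$ as index $(\gamma-1)n+\alpha$, it has two horizontal neighbours if $2\le\alpha\le n-1$ and one if $\alpha\in\{1,n\}$, and two vertical neighbours if $2\le\gamma\le m-1$ and one if $\gamma\in\{1,m\}$; adding these and translating through the block notation $a_{ij}^{kl}=C_{(i-1)n+k,(j-1)n+l}$ yields items~(\romannumeral 1) and~(\romannumeral 2) (corners: $2\rho(\epsilon)$; top/bottom or left/right edges: $3\rho(\epsilon)$; interior: $4\rho(\epsilon)$). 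For the adjacent couplings in~(\romannumeral 3), given a pair $(D_p,D_q)$ sharing a neck I would repeat the computation of Proposition~\ref{prop_C_train}(\romannumeral 2) from \cite{lly}: $C_{pq}=-\int_{\bar{\Omega}_{p,q}^{r}\cap\partial D_p}\frac{\partial v_q}{\partial\nu}\,\rd\sigma+O(1)=-\rho(\epsilon)+O(1)$, which is identical for a horizontal neck ($a_{ii}^{l,l+1}$) and a vertical neck ($a_{j,j+1}^{kk}$) since the two-sphere neck geometry is the same in both directions. Finally, item~(\romannumeral 4), that every non-adjacent entry is $O(1)$, is immediate from \cite[Lemma~2.5]{bly2}, exactly as in Proposition~\ref{prop_C_train}(\romannumeral 3).

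I expect the only real obstacle to be organizing the case analysis cleanly: one must separate the corner blocks $\mathbb{A}_{11},\mathbb{A}_{1m},\mathbb{A}_{m1},\mathbb{A}_{mm}$ from the boundary blocks and the interior blocks, and within each block distinguish the positions $k\in\{1,n\}$ from $2\le k\le n-1$, also checking that the degenerate ranges ($m=2$, so no interior rows; or $n=2$) are read as empty where appropriate. No new analytic estimate is needed — the $\rho(\epsilon)+O(1)$ neck energy and the $O(1)$ far-field and non-adjacent bounds are inherited directly from \cite{lz,lly,bly2} — which is why the paper states the proof can be omitted; a complete write-up would simply enumerate the cases above.
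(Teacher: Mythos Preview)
Your proposal is correct and follows exactly the approach the paper indicates: the paper explicitly states that the proof ``follows analogously'' to Proposition~\ref{prop_C_train} ``and is omitted,'' and your plan of counting neck contributions via neighbour counts in the grid, invoking \cite[Proposition~1.5]{lz} for diagonal entries, \cite{lly} for adjacent couplings, and \cite[Lemma~2.5]{bly2} for non-adjacent entries, is precisely that analogy. Your bookkeeping of the grid neighbour counts (corners~$=2$, edges~$=3$, interior~$=4$) and the translation through the block notation $a_{ij}^{kl}$ is accurate.
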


Let $\mathbb{I}_n$ denote the $n\times n$ identity matrix and let  $\mathbb{A}$ be the matrix defined in \eqref{def_A}. Consider an $N$-th order tridiagonal block Toeplitz matrix $\tilde{\mathbb{A}}$ with perturbations $(-\mathbb{I}_n, -\mathbb{I}_n)$ in its diagonal corners, that is, 
\begin{equation*}\label{def_A_m}
\tilde{\mathbb{A}}=\left(\begin{array}{ccccc}
\mathbb{A}\!+\! \mathbb{I}_n & -\mathbb{I}_n  &  &  &  \\
-\mathbb{I}_n & \mathbb{A}\!+\!2\mathbb{I}_n & \ddots &  &  \\
& \ddots & \ddots & \ddots &  \\
&  & \ddots & \mathbb{A}\!+\!2\mathbb{I}_n  & -\mathbb{I}_n \\
&  &  & -\mathbb{I}_n & \mathbb{A}\!+\!\mathbb{I}_n
\end{array}\right).
\end{equation*}
From Proposition \ref{prop_C_matrix}, it follows that
\begin{equation}\label{def_Fa_m}
F(a)=\det(\mathbb{C}-a\rho (\epsilon)\mathbb{I})=f_N(a)\rho(\epsilon)^{N}+f_{N-1}(a)\rho(\epsilon)^{N-1}+O(\rho(\epsilon)^{N-2}),
\end{equation}
where $f_N(a)=\det(\tilde{\mathbb{A}}-a\mathbb{I})$. Similarly as before, we seek the roots of $F(a)$. If $a\le0$, according to Proposition \ref{prop_C_matrix}, the matrix $\mathbb{C}-a\rho(\epsilon)\mathbb{I}$ is a diagonally dominant, and hence $F(a)=\det(\mathbb{C}-a\rho(\epsilon)\mathbb{I})>0$. When $a\ge 8$, the matrix $-(\mathbb{C}-a\rho(\epsilon)\mathbb{I})$ becomes diagonally dominant. Therefore, we restrict our analysis to $0<a<8$ in this section.

Since $\tilde{\mathbb{A}}$ is a block matrix, the recursive method used to compute $\det\mathbb{A}$ in Section \ref{sec_chain} is not directly applicable. However, elementary transformations remain a powerful tool for block matrices. Below, we illustrate how to use elementary transformations and our previous calculation results of $\det\mathbb{A}$ in section \ref{sec_chain} to calculate $m\times m$ determinant:
\begin{equation*}
\det\mathbb{B}=\left|\begin{array}{ccccc}
1+a & -1 &  &  &  \\
-1 & 2+a & \ddots &  &  \\
& \ddots & \ddots & \ddots &  \\
&  & \ddots & 2+a & -1 \\
&  &  & -1 & 1+a
\end{array}\right|.
\end{equation*}
First, swap the first and second rows of the determinant $\det\mathbb{B}$, and add $1+a $ times the new first row to the new second row. Similarly, swap the second and third rows of the updated determinant $\det\mathbb{B}$, and add $a^2+3a+1$ times the new second row to the new third row. Repeat this process iteratively for subsequent rows. After these transformations, the determinant becomes upper triangular
\begin{equation}\label{z1}
\det\mathbb{B}=(-1)^{m-1}\left|\begin{array}{cccccc}
-1 & 2+a &-1  &  &  \\
& -1&2+a&-1 &  &  \\
&&-1&2+a&\ddots&\\
&& & \ddots & \ddots &-1  \\
&&  &  & -1 & 1+a \\
&&  &  &  & h(a)
\end{array}\right|,
\end{equation}
where $h(a)$ is a degree-$m$ polynomial in $a$ to be determined.  From \eqref{fn_de} and \eqref{z1}, we conclude that
$h(a)=\det\mathbb{B}=f_m(-a)=\prod_{\gamma=1}^{m}(a+a_{m}^{\gamma}),$
where $a_{m}^{\gamma}$ is defined in \eqref{def_anin}.

To compute the determinant of the block matrix $\tilde{\mathbb{A}}$, we apply the same transformation approach used before. Specifically, we interchange the first $n$ rows with the second $n$ rows of the determinant of $\tilde{\mathbb{A}}$. Then, we add  $\mathbb{I}_n+\mathbb{A}$ times the new first $n$ rows to the updated second $n$ rows. Repeat analogous operations (as in the scalar case for $\mathbb{B}$) iteratively across subsequent block rows. This process yields an upper triangular block determinant:

\begin{equation*}
\det\tilde{\mathbb{A}}=(-1)^{n(m-1)}\left|\begin{array}{cccccc}
-\mathbb{I}_n & \mathbb{A}+2\mathbb{I}_n &-\mathbb{I}_n  & &  \\
& -\mathbb{I}_n &\mathbb{A}+2\mathbb{I}_n&-\mathbb{I}_n &  &  \\
&&-\mathbb{I}_n&\mathbb{A}+2\mathbb{I}_n&\ddots\\
&&  & \ddots & \ddots &-\mathbb{I}_n  \\
&&  &  & -\mathbb{I}_n & \mathbb{A}+\mathbb{I}_n \\
&&  &  &  & h(\mathbb{A})
\end{array}\right|.
\end{equation*}
Thus, we have 
\begin{equation}\label{eq_detba}
\det\tilde{\mathbb{A}}=\det h(\mathbb{A})=\det(\mathbb{A}+a_m^1\mathbb{I}_n)(\mathbb{A}+a_m^2\mathbb{I}_n)\cdots(\mathbb{A}+a_m^m\mathbb{I}_n).
\end{equation}

\begin{lemma} \label{Le_det_A+a}
Let $\mathbb{A}$ be defined in \eqref{def_A} and let $a_{m}^{\gamma}$, $a_{n}^{\alpha}$ be defined as in \eqref{def_anin} with $1\le i\le m$, $1\le j\le n$. Then
$$\det(\mathbb{A}+a_{m}^{\gamma}\mathbb{I}_n)=(-1)^n\prod_{\alpha=1}^{n}\big(a-(a_{m}^{\gamma}+a_{n}^{\alpha})\big).$$
\end{lemma}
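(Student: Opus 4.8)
The plan is to read the left-hand side, in the context of \eqref{def_Fa_m}--\eqref{eq_detba} where it occurs inside $f_N(a)=\det(\tilde{\mathbb{A}}-a\mathbb{I})$, as $\det(\mathbb{A}+a_m^\gamma\mathbb{I}_n-a\mathbb{I}_n)$ — that is, with the spectral shift $-a\mathbb{I}_n$ carried along (this is why the variable $a$ appears on the right-hand side). Once this is granted, the statement is nothing but the characteristic polynomial of the $n\times n$ chain matrix $\mathbb{A}$ of \eqref{def_A}, evaluated at the shifted argument $a-a_m^\gamma$, and it follows immediately from the factorization already established in Section~\ref{sec_chain}.

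Concretely, the steps are: first, apply \eqref{eq_fn} and \eqref{fn_de} with $N$ replaced by $n$. Since the $n\times n$ matrix $\mathbb{A}$ has exactly the form \eqref{def_A}, these give the identity
$$\det(\mathbb{A}-x\mathbb{I}_n)=(-1)^n\prod_{\alpha=1}^{n}(x-a_n^\alpha),\qquad x\in\bR,$$
with $a_n^\alpha=2(1-\cos((\alpha-1)\pi/n))$ as in \eqref{def_anin}; the extension from $x\in(0,4)$ to all real $x$ is precisely the polynomial-identity remark following \eqref{fn_de} (both sides are polynomials of degree $n$ with leading coefficient $(-1)^n$ agreeing at the $n$ listed roots). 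Second, note the trivial algebraic identity
$$\mathbb{A}+a_m^\gamma\mathbb{I}_n-a\mathbb{I}_n=\mathbb{A}-(a-a_m^\gamma)\mathbb{I}_n,$$
so substituting $x=a-a_m^\gamma$ yields
$$\det(\mathbb{A}+a_m^\gamma\mathbb{I}_n-a\mathbb{I}_n)=(-1)^n\prod_{\alpha=1}^{n}\big((a-a_m^\gamma)-a_n^\alpha\big)=(-1)^n\prod_{\alpha=1}^{n}\big(a-(a_m^\gamma+a_n^\alpha)\big),$$
which is the claimed formula.

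There is essentially no genuine obstacle: the lemma is a bookkeeping consequence of \eqref{fn_de}. The only two points deserving a word of care are the identification of the matrix in the statement with its spectrally shifted counterpart dictated by $f_N(a)=\det(\tilde{\mathbb{A}}-a\mathbb{I})$ and the block reduction \eqref{eq_detba}, and the legitimacy of the substitution $x=a-a_m^\gamma$ even when it leaves $(0,4)$, which is justified by treating \eqref{fn_de} as an identity of polynomials over $\bR$ rather than of trigonometric expressions. Note that multiplying the resulting expression over $\gamma=1,\dots,m$ and comparing with \eqref{eq_detba} recovers the polynomial \eqref{amianj} up to the overall sign $(-1)^N$, consistent with \eqref{def_Fa_m}. (An alternative that avoids \eqref{fn_de} would diagonalise $\mathbb{A}$ directly via the explicit eigenvectors \eqref{def_alpji}, but invoking the already-proven factorization is shorter and requires no distinctness of the $a_n^\alpha$.)
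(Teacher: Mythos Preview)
Your proposal is correct and follows essentially the same route as the paper: set $\tilde a=a-a_m^\gamma$, invoke the factorization \eqref{fn_de} with $N$ replaced by $n$ (valid as a polynomial identity over $\bR$), and read off the product. The paper's proof is just this two-line substitution, and your additional remarks on the implicit $-a\mathbb{I}_n$ shift and on extending \eqref{fn_de} beyond $(0,4)$ make explicit what the paper leaves tacit.
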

\begin{proof}
Let $\tilde{a}=a-a_{m}^{\gamma}$. Since $f_n(\tilde{a})=(-1)^n(\tilde{a}-a_n^1)(\tilde{a}-a_n^2)\cdots(\tilde{a}-a_n^n)$, then
$\det(\mathbb{A}+a_{m}^{\gamma}\mathbb{I}_n)=f_n(a-a_{m}^{\gamma})=(-1)^n\prod_{\alpha=1}^{n}\big(a-(a_{m}^{\gamma}+a_{n}^{\alpha})\big).$
\end{proof}
Combining \eqref{eq_detba} with Lemma \ref{Le_det_A+a}, and seting $N=m\times n$, we immediately have
\begin{equation}\label{fN_de}
f_N(a)=\det\tilde{\mathbb{A}}=(-1)^{N}\prod_{\gamma=1}^{m}\prod_{\alpha=1}^{n}\big(a-(a_{m}^{\gamma}+a_{n}^{\alpha})\big).
\end{equation}

Let $0=\bar{b}_N^1<\dots<\bar{b}_N^s$ denote the distinct roots of $f_N(a)$ where $1\le s\le N$, and let $r_t$ be the multiplicity of the root $\bar{b}_N^t $, $1\le t\le s$. We factorize $f_N(a)$ in \eqref{fN_de} as $f_N(a)=(-1)^N\prod_{t=1}^s(a-\bar{b}_N^t)^{r_t}$.
The following lemmas are similar to Lemmas  \ref{le_equal_r}, and \ref{le_omg1_ring}. So we omit their proofs.
\begin{lemma}\label{le_omg1_matrix}
Let $\tilde{\lambda}_1$ be defined in \eqref{tillam}. Then $\tilde{\lambda}_1=\frac{3v_b^2M}{4\pi R^3}\delta\big(1+o(1)\big)$,where $M$ is defined in \eqref{def_M}.
\end{lemma}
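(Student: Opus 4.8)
The plan is to mirror the argument of Lemma \ref{le_omg1_chain} for the matrix configuration, exploiting the fact that $f_N(0) = 0$ with $\bar b_N^1 = 0$ a root of $f_N(a)$, and that the corresponding eigenvector at leading order is the constant vector $\boldsymbol{\alpha}^1 = (1,\dots,1)^{\mathrm{T}}$. First I would expand the characteristic polynomial $\det(\mathbb{C} - \lambda\mathbb{I})$ in powers of $\rho(\epsilon) = \pi|\log\epsilon|$ using Proposition \ref{prop_C_matrix}, writing
$\tilde f(\lambda) := \det(\mathbb{C} - \lambda\mathbb{I}) = \tilde f_0(\lambda)\rho(\epsilon)^N + \tilde f_1(\lambda)\rho(\epsilon)^{N-1} + O(\rho(\epsilon)^{N-2})$,
where $\tilde f_k$ has degree $k$. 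Since the leading block Toeplitz matrix $\tilde{\mathbb{A}}$ satisfies $\det\tilde{\mathbb{A}} = f_N(0) = 0$ by \eqref{fN_de} (the factor $\gamma=\alpha=1$ gives $a - (a_m^1 + a_n^1) = a$), we have $\tilde f_0(\lambda) = f_N(0) = 0$, so $\lambda_1$ is the root of $\tilde f$ that stays of order $O(1)$ as $\epsilon \to 0$.

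Next I would identify the $O(1)$ correction. As in Lemma \ref{le_omg1_chain}, expanding $\det(\mathbb{C}-\lambda\mathbb{I})$ along the row structure and collecting the $\rho(\epsilon)^{N-1}$ coefficient, one finds $\tilde f_1(\lambda)$ is (up to sign) a determinant whose first row carries the entries $\tilde M_i - \lambda$ with $\tilde M_i := \sum_{|j-i|\le 1}M_{ij} + \sum_{\text{other }j}C_{ij}$ summed over the lattice-neighbour structure, while the remaining rows reproduce the kernel structure of $\tilde{\mathbb{A}}$ restricted to the constant direction. The key algebraic point is that the constant vector lies in $\ker\tilde{\mathbb{A}}$, so $\tilde f_1(\lambda)$ vanishes exactly at $\lambda = \frac{1}{N}\sum_{i=1}^N \tilde M_i$; a sign-change argument for $\tilde f\big(\tfrac1N\sum\tilde M_i \pm \rho(\epsilon)^{-s}\big)$ with $s\in(0,1)$ then pins down $\lambda_1 = \frac1N\sum_{i=1}^N\tilde M_i + O(\rho(\epsilon)^{-s})$. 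Finally, invoking the analogue of Proposition \ref{prop_C_train}(iii) for the matrix arrangement — which follows from $M = \mathrm{Cap}(D)/N = \frac1N\sum_{i,j}C_{ij}$ via Green's formula — gives $\frac1N\sum_i\tilde M_i = M + o(1)$, and combining with the scaling \eqref{tillam} yields $\tilde\lambda_1 = \frac{3v_b^2 M}{4\pi R^3}\delta(1+o(1))$.

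The main obstacle is verifying that the $\rho(\epsilon)^{N-1}$-coefficient $\tilde f_1(\lambda)$ has precisely the right structure so that its unique relevant root is $\frac1N\sum\tilde M_i$: because the leading matrix $\tilde{\mathbb{A}}$ is block-tridiagonal rather than scalar-tridiagonal, one must check that the rank of $\tilde{\mathbb{A}}$ is exactly $N-1$ (so that the $\rho(\epsilon)^{N-1}$ term genuinely dominates among the subleading contributions and no extra $O(1)$ eigenvalue appears). This amounts to confirming that $\ker\tilde{\mathbb{A}}$ is one-dimensional, spanned by the constant vector — which can be read off from the factorization \eqref{fN_de}, since $a=0$ occurs as a simple root of $f_N$ precisely when $a_m^\gamma + a_n^\alpha = 0$ only for $(\gamma,\alpha) = (1,1)$, i.e. because $a_m^\gamma, a_n^\alpha \ge 0$ with equality only at $\gamma=1$, $\alpha=1$ respectively. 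Once this simplicity is in hand, the sign-change/intermediate-value argument is routine and the rest follows as in Lemma \ref{le_omg1_chain}.
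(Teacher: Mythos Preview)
Your proposal is correct and follows essentially the same route as the paper: the paper omits the proof of Lemma \ref{le_omg1_matrix} entirely, deferring to Lemma \ref{le_omg1_ring} and ultimately to Lemma \ref{le_omg1_chain}, whose argument you have accurately adapted to the block-tridiagonal setting. Your explicit verification that $a=0$ is a simple root of $f_N$ (hence $\ker\tilde{\mathbb{A}}$ is one-dimensional) is a useful detail that the paper leaves implicit.
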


\begin{lemma}
Let $F(a)$ be defined in \eqref{def_Fa_m}. Then for $2\le t\le s$, there are $r_t$ roots of $F(a)$ within the interval
$\big(\bar{b}_N^t-C/\rho(\epsilon),\bar{b}_N^t+C/\rho(\epsilon)\big).$
\end{lemma}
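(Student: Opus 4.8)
The plan is to mirror, \emph{mutatis mutandis}, the argument of Lemma~\ref{le_equal_r} established for the ring configuration, which already packages all the essential ingredients. The starting point is the factorization $F(a)=f_N(a)\rho(\epsilon)^N+f_{N-1}(a)\rho(\epsilon)^{N-1}+O(\rho(\epsilon)^{N-2})$ from \eqref{def_Fa_m}, together with the explicit product form $f_N(a)=(-1)^N\prod_{t=1}^s(a-\bar b_N^t)^{r_t}$ derived from \eqref{fN_de}. Since $\mathbb{C}$ is symmetric and positive definite (Proposition~\ref{prop_C}), $F(a)$ has exactly $N$ real roots counted with multiplicity, and they all lie in $(0,8)$ by the diagonal-dominance argument given after \eqref{def_Fa_m}. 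The first step is therefore a localization step entirely analogous to Lemmas~\ref{le_inter_r}--\ref{le_atmost_r} and Corollary~\ref{le_exact}: writing $f^t(a):=f_N(a)/(a-\bar b_N^t)^{r_t}$ and testing $F$ at $a=\bar b_N^t\pm C_t\rho(\epsilon)^{-1/2}$, one shows $|F(a)|$ is bounded below by a positive constant times $\rho(\epsilon)^{N-1}$ outside the union $\bigcup_{t=2}^{s}(\bar b_N^t-C\rho(\epsilon)^{-1/2},\bar b_N^t+C\rho(\epsilon)^{-1/2})$ once each $C_t$ is chosen large, so all roots $\tilde a_N^i$ (for $2\le i\le N$) are trapped in that union; a counting/pigeonhole argument against the fact that $f_N^{(j)}$ has the right number of roots near $\bar b_N^t$ then forces exactly $r_t$ roots into the $t$-th interval.

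The second and decisive step is the refinement from the $\rho(\epsilon)^{-1/2}$ scale to the $\rho(\epsilon)^{-1}$ scale claimed in the statement. Here I would reproduce the eigenvector-bootstrap of Lemma~\ref{le_equal_r} in block form. Suppose $\tilde a_N^t=\bar b_N^t+\gamma\rho(\epsilon)^{-1/s}$ for some $s\in(1,2]$ with $\gamma$ bounded above and below; let $\boldsymbol{\alpha}^t$ be a unit eigenvector of $\mathbb{C}$ for the eigenvalue $\tilde a_N^t\rho(\epsilon)$. Using Proposition~\ref{prop_C_matrix} to write $\mathbb{C}=\rho(\epsilon)\tilde{\mathbb{A}}+\hat{\mathbb{C}}$ with $\hat{\mathbb{C}}$ having $O(1)$ entries, the eigen-relation becomes $\rho(\epsilon)(\tilde{\mathbb{A}}-\bar b_N^t\mathbb{I})\boldsymbol{\alpha}^t=(\hat{\mathbb{C}}+\gamma\rho(\epsilon)^{1-1/s}\mathbb{I})\boldsymbol{\alpha}^t$. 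Decompose $\boldsymbol{\alpha}^t=\boldsymbol{\beta}^t+\boldsymbol{\gamma}^t$ with $\boldsymbol{\beta}^t\in\ker(\tilde{\mathbb{A}}-\bar b_N^t\mathbb{I})$ and $\boldsymbol{\gamma}^t$ orthogonal to it; projecting onto the range of the symmetric matrix $\tilde{\mathbb{A}}-\bar b_N^t\mathbb{I}$ gives $\boldsymbol{\gamma}^t=O(\rho(\epsilon)^{-1/s})$, and then rescaling $\boldsymbol{\gamma}^t=\rho(\epsilon)^{-1/s}\hat{\boldsymbol{\gamma}}^t$ and projecting onto $\ker$ yields an identity whose left-hand side is $O(1)$ while the right-hand side is $O(\rho(\epsilon)^{-1+1/s})$, a contradiction unless $s$ can be taken to be $1$, i.e. $\tilde a_N^t=\bar b_N^t+O(\rho(\epsilon)^{-1})$. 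Applying this to each of the $r_t$ roots in the $t$-th cluster gives the assertion. Finally, translating $\tilde a_N^t$ back through $\lambda_i=\tilde a_N^t\rho(\epsilon)$ and the scaling \eqref{tillam} recovers the $O(\rho(\epsilon)^{-1})$ error, which is what is needed downstream.

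The one genuinely new point compared with the ring case is that $\tilde{\mathbb{A}}$ is a \emph{block} tridiagonal matrix rather than a circulant, so I must know its kernel and spectral-gap structure at the value $\bar b_N^t$. This is supplied by the block-elementary-transformation computation already carried out in this section: \eqref{eq_detba} shows $\det(\tilde{\mathbb{A}}-a\mathbb{I})=\prod_{\gamma=1}^m\det(\mathbb{A}-(a-a_m^\gamma)\mathbb{I}_n)$, so the eigenvalues of $\tilde{\mathbb{A}}$ are exactly the sums $a_m^\gamma+a_n^\alpha$, with eigenvectors built as tensor products of the chain eigenvectors from \eqref{def_alpji}; hence $\bar b_N^t$ is an eigenvalue of $\tilde{\mathbb{A}}$ of multiplicity $r_t$ (the number of pairs $(\gamma,\alpha)$ with $a_m^\gamma+a_n^\alpha=\bar b_N^t$), the corresponding eigenspace is $r_t$-dimensional, and $\tilde{\mathbb{A}}$ being symmetric gives $\ker(\tilde{\mathbb{A}}-\bar b_N^t\mathbb{I})\cap\mathrm{range}(\tilde{\mathbb{A}}-\bar b_N^t\mathbb{I})=\{0\}$ with a spectral gap of size $O(1)$ separating $\bar b_N^t$ from the other $a_m^\gamma+a_n^\alpha$. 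The main obstacle, such as it is, is purely bookkeeping: keeping track of this multiplicity $r_t$ consistently through the localization-and-counting step so that the ``exactly $r_t$ roots'' conclusion is not merely ``at most $r_t$''. Once the eigenstructure of $\tilde{\mathbb{A}}$ is in hand this is the same pigeonhole argument as in Corollary~\ref{le_exact}, and the proof is routine, which is why the authors omit it.
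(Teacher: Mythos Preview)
Your proposal is correct and follows exactly the approach the paper indicates (the authors explicitly omit the proof as ``similar to Lemma~\ref{le_equal_r}''). One small correction worth flagging: for a root $\bar b_N^t$ of multiplicity $r_t>2$, the initial localization scale should be $\rho(\epsilon)^{-1/r_t}$ rather than $\rho(\epsilon)^{-1/2}$, so that the term $(\pm C_t)^{r_t}f^t(\cdot)\rho(\epsilon)^{N-1}$ balances $f_{N-1}(\cdot)\rho(\epsilon)^{N-1}$, and correspondingly $s\in(1,r_t]$ in the eigenvector bootstrap; with that adjustment the argument goes through verbatim.
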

\begin{proof}[Proof of Theorem \ref{Th_matrix}]
The conclusion follows directly by adapting the proof methodology of Theorem \ref{Th_ring} to the matrix case.
\end{proof}

\section{Examples}\label{sec_exam}

In this section, we present several examples to illustrate the results obtained in Sections \ref{sec_chain}, \ref{sec_ring}, and \ref{sec_matrix}. Let $u_l^i:=u^i|_{\partial D_l}$, $1\le i,l\le N$. Let $\gamma=\min\{\beta/2, 1-\beta\}$, and define $\eta=\sqrt{(3\Lambda v_b^2)/(4R^3)}\delta^{\beta/2}$.
\subsection{For $N=4$}
\subsubsection{Chain arrangement}By Theorem \ref{Th_chain}, the eigenvalues are
$a_4^2=2-\sqrt{2}, a_4^3=2, a_4^4=2+\sqrt{2}$.
The corresponding resonant frequencies are
\begin{align*}
&\omega_1=\sqrt{\frac{3v_b^2M}{4\pi R^3}\delta}\big(1+o(1)\big),&&\hspace{-20mm}\omega_2=\sqrt{2-\sqrt{2}}\eta+O(\delta^{1-\beta/2}),\\
&\omega_3=\sqrt{2}\eta+O(\delta^{1-\beta/2}),&&\hspace{-20mm}\omega_4=\sqrt{2+\sqrt{2}}\eta+O(\delta^{1-\beta/2}).
\end{align*}

From Theorem \ref{Th_mode}, the resonant mode $u^1$, associated with the resonant frequency $\omega_1$, satisfies $u^1_l=1$ up to an error of order $O(\delta^{\gamma})$. The modes $u^2$, $u^3$, and $u^4$, associated with $\omega_2$, $\omega_3$, and $\omega_4$, satisfy the following boundary conditions:
\begin{align*}
&u^2_1=1,&&\hspace{-8mm}u^2_2=-1+\sqrt{2},&&\hspace{-8mm}u^2_3=1-\sqrt{2}, &&\hspace{-8mm}u^2_4=-1;\\
&u^3_1=1,&&\hspace{-8mm}u^3_2=-1,&&\hspace{-8mm}u^3_3=-1,&&\hspace{-8mm}u^3_4=1;\\
&u^4_1=1,&&\hspace{-8mm}u^4_2=-1-\sqrt{2},&&\hspace{-8mm}u^4_3=1+\sqrt{2},&&\hspace{-8mm}u^4_4=-1.
\end{align*}
From these, we can observe that $\nabla u^3$ exhibits a lower blow--up rate between $D_2$ and $D_3$. Figure $\ref{f_c4}$ provides an intuitive visualization of these resonant modes (error terms omitted for clarity).

\begin{remark}\label{Re_ima}It follows from \eqref{eq_ima} that 
\begin{align*}
\Im\omega_i=-\delta\frac{3v_b^2}{32v\pi ^2R^3}\frac{\big(\sum_{l=1}^Nu_l^i\int_{\partial B_{\tilde{R}}}\frac{\partial v_l}{\partial\nu}\big|_+\rd\sigma\big)^2}{\sum_{l=1}^{N}(u_l^i)^2}+O(\delta^{3/2}).
\end{align*}
Then, for the chain arrangement of $N=4$, we have  $\Im\omega_2,~~\Im\omega_4=O(\delta^{\gamma'})$, and up to an error of order $O(\delta^{\gamma'})$,
\begin{align*}
\Im\omega_1\!=\!-\delta\frac{3v_b^2}{32v\pi ^2R^3}\Big(\int_{B_{\tilde{R}}}\!\!\frac{\partial(v_1+v_2)}{\partial\nu}\Big|_+\!\rd\sigma\Big)^2,~
\Im\omega_3\!=\!-\delta\frac{3v_b^2}{32v\pi ^2R^3}\Big(\int_{B_{\tilde{R}}}\!\!\frac{\partial(v_1-v_2)}{\partial\nu}\Big|_+\!\rd\sigma\Big)^2,
\end{align*}
where $\gamma'=\min\{3/2,1+\gamma\}$.
\end{remark}

\begin{figure}[h]
\begin{tikzpicture}
\def\a{0.8}
\def\b{1}

\shade[shading=ball, ball color=blue!30!white, opacity=0.8]   (0,0) circle[radius=\a];
\shade[shading=ball, ball color=blue!30!white, opacity=0.8]   (2.1*\a,0) circle[radius=\a];
\shade[shading=ball, ball color=blue!30!white, opacity=0.8]   (4.2*\a,0) circle[radius=\a];
\shade[shading=ball, ball color=blue!30!white, opacity=0.8]   (6.3*\a,0) circle[radius=\a];

\begin{scope}
	\clip (0.5*\a,-0.4*\a)rectangle(1.6*\a,0.4*\a);
	\filldraw[color=orange!40,opacity=0.975](0.01,1*\a)arc(90:-90:1*\a)--(0.01,-1*\a)--(2.087*\a,-1*\a)--(2.087*\a,-1*\a)arc(-90:-270:1*\a)--cycle;
\end{scope}
\begin{scope}
	\clip (2.5*\a,-0.4*\a)rectangle(3.6*\a,0.4*\a);
	\filldraw[color=orange!40,opacity=0.975](2.112*\a,1*\a)arc(90:-90:1*\a)--(2.112*\a,-1*\a)--(4.187*\a,-1*\a)--(4.187*\a,-1*\a)arc(-90:-270:1*\a)--cycle;
\end{scope}
\begin{scope}
	\clip (4.5*\a,-0.4*\a)rectangle(5.6*\a,0.4*\a);
	\filldraw[color=orange!40,opacity=0.975](4.212*\a,1*\a)arc(90:-90:1*\a)--(4.212*\a,-1*\a)--(6.287*\a,-1*\a)--(6.287*\a,-1*\a)arc(-90:-270:1*\a)--cycle;
\end{scope}
\node [scale=1.1*\a] at (0,0) {1};
\node [scale=1.1*\a] at (2.1*\a,0) {1};
\node [scale=1.1*\a] at (4.2*\a,0) {1};
\node [scale=1.1*\a] at (6.3*\a,0) {1};
\node [scale=\a] at (0,-0.7*\a) {$D_1$};
\node [scale=\a] at (2.1*\a,-0.7*\a) {$D_2$};
\node [scale=\a] at (4.2*\a,-0.7*\a) {$D_3$};
\node [scale=\a] at (6.3*\a,-0.7*\a) {$D_4$};

\node at (-0.9*\b,-0.9*\b) {};
\node at (5.3*\b,1*\b) {};
\node at (3.05*\a,-1.6*\a) {(a) $u^1$};
\end{tikzpicture}
\begin{tikzpicture}
\def\a{0.8}
\def\b{1}

\shade[shading=ball, ball color=blue!30!white, opacity=0.8]   (0,0) circle[radius=\a];
\shade[shading=ball, ball color=blue!30!white, opacity=0.8]   (2.1*\a,0) circle[radius=\a];
\shade[shading=ball, ball color=blue!30!white, opacity=0.8]   (4.2*\a,0) circle[radius=\a];
\shade[shading=ball, ball color=blue!30!white, opacity=0.8]   (6.3*\a,0) circle[radius=\a];

\begin{scope}
	\clip (0.5*\a,-0.4*\a)rectangle(1.6*\a,0.4*\a);
	\filldraw[color=red!60,opacity=0.975](0.01,1*\a)arc(90:-90:1*\a)--(0.01,-1*\a)--(2.087*\a,-1*\a)--(2.087*\a,-1*\a)arc(-90:-270:1*\a)--cycle;
\end{scope}
\begin{scope}
	\clip (2.5*\a,-0.4*\a)rectangle(3.6*\a,0.4*\a);
	\filldraw[color=red!60,opacity=0.975](2.112*\a,1*\a)arc(90:-90:1*\a)--(2.112*\a,-1*\a)--(4.187*\a,-1*\a)--(4.187*\a,-1*\a)arc(-90:-270:1*\a)--cycle;
\end{scope}
\begin{scope}
	\clip (4.5*\a,-0.4*\a)rectangle(5.6*\a,0.4*\a);
	\filldraw[color=red!60,opacity=0.975](4.212*\a,1*\a)arc(90:-90:1*\a)--(4.212*\a,-1*\a)--(6.287*\a,-1*\a)--(6.287*\a,-1*\a)arc(-90:-270:1*\a)--cycle;
\end{scope}

\node [scale=1.1*\a] at (0,0) {1};
\node [scale=1.1*\a] at (2.1*\a,0) {$-1\!+\sqrt{2}$};
\node [scale=1.1*\a] at (4.2*\a,0) {$1-\sqrt{2}$};
\node [scale=1.1*\a] at (6.3*\a,0) {$-1$};
\node [scale=\a] at (0,-0.7*\a) {$D_1$};
\node [scale=\a] at (2.1*\a,-0.7*\a) {$D_2$};
\node [scale=\a] at (4.2*\a,-0.7*\a) {$D_3$};
\node [scale=\a] at (6.3*\a,-0.7*\a) {$D_4$};

\node at (-0.9*\b,-0.9*\b) {};
\node at (5.3*\b,1*\b) {};
\node at (3.05*\a,-1.6*\a) {(b) $u^2$};
\end{tikzpicture}
\begin{tikzpicture}
\def\a{0.8}
\def\b{1}

\shade[shading=ball, ball color=blue!30!white, opacity=0.8]   (0,0) circle[radius=\a];
\shade[shading=ball, ball color=blue!30!white, opacity=0.8]   (2.1*\a,0) circle[radius=\a];
\shade[shading=ball, ball color=blue!30!white, opacity=0.8]   (4.2*\a,0) circle[radius=\a];
\shade[shading=ball, ball color=blue!30!white, opacity=0.8]   (6.3*\a,0) circle[radius=\a];
\begin{scope}
	\clip (0.5*\a,-0.4*\a)rectangle(1.6*\a,0.4*\a);
	\filldraw[color=red!60,opacity=0.975](0.01,1*\a)arc(90:-90:1*\a)--(0.01,-1*\a)--(2.087*\a,-1*\a)--(2.087*\a,-1*\a)arc(-90:-270:1*\a)--cycle;
\end{scope}
\begin{scope}
	\clip (2.5*\a,-0.4*\a)rectangle(3.6*\a,0.4*\a);
	\filldraw[color=orange!40,opacity=0.975](2.112*\a,1*\a)arc(90:-90:1*\a)--(2.112*\a,-1*\a)--(4.187*\a,-1*\a)--(4.187*\a,-1*\a)arc(-90:-270:1*\a)--cycle;
\end{scope}
\begin{scope}
	\clip (4.5*\a,-0.4*\a)rectangle(5.6*\a,0.4*\a);
	\filldraw[color=red!60,opacity=0.975](4.212*\a,1*\a)arc(90:-90:1*\a)--(4.212*\a,-1*\a)--(6.287*\a,-1*\a)--(6.287*\a,-1*\a)arc(-90:-270:1*\a)--cycle;
\end{scope} 

\node [scale=1.1*\a] at (0,0) {1};
\node [scale=1.1*\a] at (2.1*\a,0) {$-1$};
\node [scale=1.1*\a] at (4.2*\a,0) {$-1$};
\node [scale=1.1*\a] at (6.3*\a,0) {1};
\node [scale=\a] at (0,-0.7*\a) {$D_1$};
\node [scale=\a] at (2.1*\a,-0.7*\a) {$D_2$};
\node [scale=\a] at (4.2*\a,-0.7*\a) {$D_3$};
\node [scale=\a] at (6.3*\a,-0.7*\a) {$D_4$};

\node at (-0.9*\b,-0.9*\b) {};
\node at (5.3*\b,1*\b) {};
\node at (3.05*\a,-1.6*\a) {(c) $u^3$};
\end{tikzpicture}
\begin{tikzpicture}
\def\a{0.8}
\def\b{1}

\shade[shading=ball, ball color=blue!30!white, opacity=0.8]   (0,0) circle[radius=\a];
\shade[shading=ball, ball color=blue!30!white, opacity=0.8]   (2.1*\a,0) circle[radius=\a];
\shade[shading=ball, ball color=blue!30!white, opacity=0.8]   (4.2*\a,0) circle[radius=\a];
\shade[shading=ball, ball color=blue!30!white, opacity=0.8]   (6.3*\a,0) circle[radius=\a];

\begin{scope}
	\clip (0.5*\a,-0.4*\a)rectangle(1.6*\a,0.4*\a);
	\filldraw[color=red!60,opacity=0.975](0.01,1*\a)arc(90:-90:1*\a)--(0.01,-1*\a)--(2.087*\a,-1*\a)--(2.087*\a,-1*\a)arc(-90:-270:1*\a)--cycle;
\end{scope}
\begin{scope}
	\clip (2.5*\a,-0.4*\a)rectangle(3.6*\a,0.4*\a);
	\filldraw[color=red!60,opacity=0.975](2.112*\a,1*\a)arc(90:-90:1*\a)--(2.112*\a,-1*\a)--(4.187*\a,-1*\a)--(4.187*\a,-1*\a)arc(-90:-270:1*\a)--cycle;
\end{scope}
\begin{scope}
	\clip (4.5*\a,-0.4*\a)rectangle(5.6*\a,0.4*\a);
	\filldraw[color=red!60,opacity=0.975](4.212*\a,1*\a)arc(90:-90:1*\a)--(4.212*\a,-1*\a)--(6.287*\a,-1*\a)--(6.287*\a,-1*\a)arc(-90:-270:1*\a)--cycle;
\end{scope}

\node [scale=1.1*\a] at (0,0) {1};
\node [scale=1.1*\a] at (2.1*\a,0) {$-1\!-\sqrt{2}$};
\node [scale=1.1*\a] at (4.2*\a,0) {$1+\sqrt{2}$};
\node [scale=1.1*\a] at (6.3*\a,0) {$-1$};
\node [scale=\a] at (0,-0.7*\a) {$D_1$};
\node [scale=\a] at (2.1*\a,-0.7*\a) {$D_2$};
\node [scale=\a] at (4.2*\a,-0.7*\a) {$D_3$};
\node [scale=\a] at (6.3*\a,-0.7*\a) {$D_4$};

\node at (-0.9*\b,-0.9*\b) {};
\node at (5.3*\b,1*\b) {};
\node at (3.05*\a,-1.6*\a) {(d) $u^4$};
\end{tikzpicture}
\caption{Resonant modes of a chain arrangement for $N=4$. In this figure and subsequent ones, the orange regions between the spheres indicate that the upper--bounded norm of the gradient of the resonant mode is $\frac{C}{\epsilon|\log\epsilon|}$ in those areas. Conversely, the red regions denote that the gradient (and thus its norm) will exhibit a blow--up behavior with a rate of $\frac{C}{\epsilon}$ within those intervals.}\label{f_c4}
\end{figure}
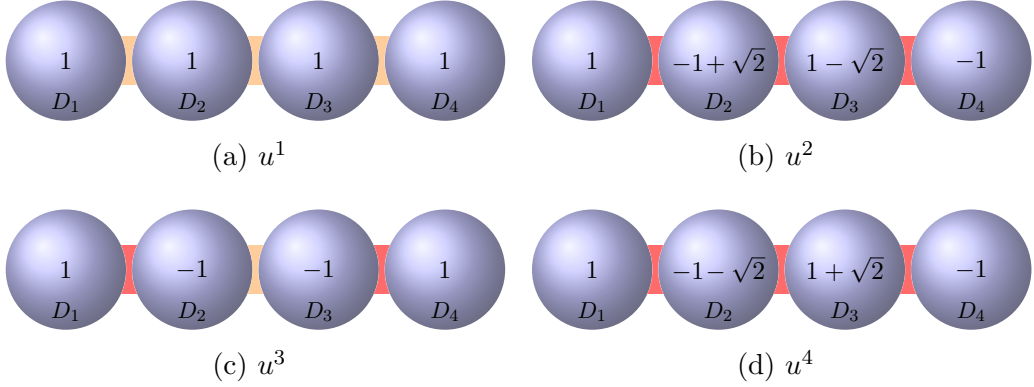

\subsubsection{Ring arrangement}

From Theorem \ref{Th_ring}, we obtain the eigenvalues
$\hat{a}_4^2=2$, $\hat{a}_4^3=4.$
Consequently, the resonant frequencies are 
$$\omega_1=\sqrt{\frac{3v_b^2M}{4\pi R^3}\delta}\big(1+o(1)\big), \quad\omega_2,\omega_3=\sqrt{2}\eta+O(\delta^{1-\beta/2}),\quad\omega_4=2\eta+O(\delta^{1-\beta/2}).$$
By Theorem \ref{Th_mode_r}, the resonant modes satisfy the following boundary conditions (up to an error of order $O(\delta^\gamma)$): for the mode $u^1$, associated with  $\omega_1$, $u_l^1=1$; for the modes $u^i$, associated with  $\omega_i$, $2\le i\le 4$, 
\begin{align*}
&u^2_1=k_1^2, &&\hspace{-12mm}u^2_2=k_2^2, &&\hspace{-12mm}u^2_3=-k_1^2, &&\hspace{-12mm}u^2_4=-k_2^2;\\
&u^3_1=k_1^3, &&\hspace{-12mm}u^3_2=k_2^3, &&\hspace{-12mm}u^3_3=-k_1^3, &&\hspace{-12mm}u^3_4=-k_2^3;\\
&u^4_1=1, &&\hspace{-12mm}u^4_2=-1, &&\hspace{-12mm}u^4_3=1, &&\hspace{-12mm}u^4_4=-1,
\end{align*}
where $k_t^s$ are some constants for $t=1,2$ and $s=2,3$. We observe that, except for $u^1$, the gradient of resonant modes $u^2$, $u^3$ and $u^4$ may blow up with a rate of $\epsilon^{-1}$, as illustrated in Figure \ref{f_r4}.
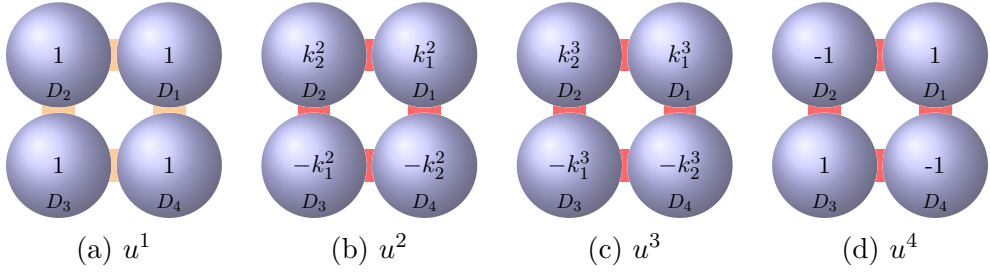
\begin{figure}[h]
\begin{tikzpicture}
\def\a{0.7}

\shade[shading=ball, ball color=blue!30!white, opacity=0.8]    (0,0) circle[radius=\a];
\shade[shading=ball, ball color=blue!30!white, opacity=0.8]   (2.1*\a,0) circle[radius=\a];
\shade[shading=ball, ball color=blue!30!white, opacity=0.8]    (0,2.1*\a) circle[radius=\a];
\shade[shading=ball, ball color=blue!30!white, opacity=0.8]   (2.1*\a,2.1*\a) circle[radius=\a];

\begin{scope}
	\clip (0.5*\a,-0.3*\a)rectangle(1.6*\a,0.3*\a);
	\filldraw[color=orange!40](0.01,1*\a)arc(90:-90:1*\a)--(0.01,-1*\a)--(2.086*\a,-1*\a)--(2.086*\a,-1*\a)arc(-90:-270:1*\a)--cycle;
\end{scope}
\begin{scope}
	\clip (0.5*\a,1.8*\a)rectangle(1.6*\a,2.4*\a);
	\filldraw[color=orange!40](0.01,3.1*\a)arc(90:-90:1*\a)--(0.01,1.1*\a)--(2.085*\a,1.1*\a)--(2.085*\a,1.1*\a)arc(-90:-270:1*\a)--cycle;
\end{scope}
\begin{scope}
	\clip (-0.3*\a,0.5*\a)rectangle(0.3*\a,1.6*\a);
	\filldraw[color=orange!40](1*\a,0.01)arc(0:180:1*\a)--(-1*\a,0.01)--(-1*\a,2.086*\a)--(-1*\a,2.086*\a)arc(-180:0:1*\a)--cycle;
\end{scope}
\begin{scope}
	\clip (1.8*\a,0.5*\a)rectangle(2.4*\a,1.6*\a);
	\filldraw[color=orange!40](3.1*\a,0.01)arc(0:180:1*\a)--(1.1*\a,0.01)--(1.1*\a,2.085*\a)--(1.1*\a,2.085*\a)arc(-180:0:1*\a)--cycle;
\end{scope}

\node [scale=1.2*\a] at (0,0) {1};
\node [scale=1.2*\a] at ({2.1*\a},0) {1};
\node [scale=1.2*\a] at (0,{2.1*\a}) {1};
\node [scale=1.2*\a] at (2.1*\a,2.1*\a) {1};
\node [scale=\a] at (0,-0.7*\a) {$D_3$};
\node [scale=\a] at (2.1*\a,-0.7*\a) {$D_4$};
\node [scale=\a] at (0,1.4*\a) {$D_2$};
\node [scale=\a] at (2.1*\a,1.4*\a) {$D_1$};

\node at  (-1.1*\a,-1.1*\a){};
\node at  (3.2*\a,3.2*\a){};
\node at (1.05*\a,-1.6*\a) {(a) $u^1$};
\end{tikzpicture}
\begin{tikzpicture}
\def\a{.7}

\shade[shading=ball, ball color=blue!30!white, opacity=0.8]    (0,0) circle[radius=\a];
\shade[shading=ball, ball color=blue!30!white, opacity=0.8]    (2.1*\a,0) circle[radius=\a];
\shade[shading=ball, ball color=blue!30!white, opacity=0.8]    (0,2.1*\a) circle[radius=\a];
\shade[shading=ball, ball color=blue!30!white, opacity=0.8]    (2.1*\a,2.1*\a) circle[radius=\a];

\begin{scope}
	\clip (0.5*\a,-0.3*\a)rectangle(1.6*\a,0.3*\a);
	\filldraw[color=red!60](0.01,1*\a)arc(90:-90:1*\a)--(0.01,-1*\a)--(2.086*\a,-1*\a)--(2.086*\a,-1*\a)arc(-90:-270:1*\a)--cycle;
\end{scope}
\begin{scope}
	\clip (0.5*\a,1.8*\a)rectangle(1.6*\a,2.4*\a);
	\filldraw[color=red!60](0.01,3.1*\a)arc(90:-90:1*\a)--(0.01,1.1*\a)--(2.085*\a,1.1*\a)--(2.085*\a,1.1*\a)arc(-90:-270:1*\a)--cycle;
\end{scope}
\begin{scope}
	\clip (-0.3*\a,0.5*\a)rectangle(0.3*\a,1.6*\a);
	\filldraw[color=red!60](1*\a,0.01)arc(0:180:1*\a)--(-1*\a,0.01)--(-1*\a,2.086*\a)--(-1*\a,2.086*\a)arc(-180:0:1*\a)--cycle;
\end{scope}
\begin{scope}
	\clip (1.8*\a,0.5*\a)rectangle(2.4*\a,1.6*\a);
	\filldraw[color=red!60](3.1*\a,0.01)arc(0:180:1*\a)--(1.1*\a,0.01)--(1.1*\a,2.085*\a)--(1.1*\a,2.085*\a)arc(-180:0:1*\a)--cycle;
\end{scope}

\node [scale=1.2*\a] at (0,0) {$-k_1^2$};
\node [scale=1.2*\a] at (2.1*\a,0) {$-k_2^2$};
\node [scale=1.2*\a] at (0,2.1*\a) {$k_2^2$};
\node [scale=1.2*\a] at (2.1*\a,2.1*\a) {$k_1^2$};
\node [scale=\a] at (0,-0.7*\a) {$D_3$};
\node [scale=\a] at (2.1*\a,-0.7*\a) {$D_4$};
\node [scale=\a] at (0,1.4*\a) {$D_2$};
\node [scale=\a] at (2.1*\a,1.4*\a) {$D_1$};

\node at  (-1.1*\a,-1.1*\a){};
\node at  (3.2*\a,3.2*\a){};
\node at (1.05*\a,-1.6*\a) {(b) $u^2$};
\end{tikzpicture}
\begin{tikzpicture}
\def\a{.7}

\shade[shading=ball, ball color=blue!30!white, opacity=0.8]    (0,0) circle[radius=\a];
\shade[shading=ball, ball color=blue!30!white, opacity=0.8]    (2.1*\a,0) circle[radius=\a];
\shade[shading=ball, ball color=blue!30!white, opacity=0.8]    (0,2.1*\a) circle[radius=\a];
\shade[shading=ball, ball color=blue!30!white, opacity=0.8]    (2.1*\a,2.1*\a) circle[radius=\a];

\begin{scope}
	\clip (0.5*\a,-0.3*\a)rectangle(1.6*\a,0.3*\a);
	\filldraw[color=red!60](0.01,1*\a)arc(90:-90:1*\a)--(0.01,-1*\a)--(2.086*\a,-1*\a)--(2.086*\a,-1*\a)arc(-90:-270:1*\a)--cycle;
\end{scope}
\begin{scope}
	\clip (0.5*\a,1.8*\a)rectangle(1.6*\a,2.4*\a);
	\filldraw[color=red!60](0.01,3.1*\a)arc(90:-90:1*\a)--(0.01,1.1*\a)--(2.085*\a,1.1*\a)--(2.085*\a,1.1*\a)arc(-90:-270:1*\a)--cycle;
\end{scope}
\begin{scope}
	\clip (-0.3*\a,0.5*\a)rectangle(0.3*\a,1.6*\a);
	\filldraw[color=red!60](1*\a,0.01)arc(0:180:1*\a)--(-1*\a,0.01)--(-1*\a,2.086*\a)--(-1*\a,2.086*\a)arc(-180:0:1*\a)--cycle;
\end{scope}
\begin{scope}
	\clip (1.8*\a,0.5*\a)rectangle(2.4*\a,1.6*\a);
	\filldraw[color=red!60](3.1*\a,0.01)arc(0:180:1*\a)--(1.1*\a,0.01)--(1.1*\a,2.085*\a)--(1.1*\a,2.085*\a)arc(-180:0:1*\a)--cycle;
\end{scope}

\node [scale=1.2*\a] at (0,0) {$-k_1^3$};
\node [scale=1.2*\a] at (2.1*\a,0) {$-k_2^3$};
\node [scale=1.2*\a] at (0,2.1*\a) {$k_2^3$};
\node [scale=1.2*\a] at (2.1*\a,2.1*\a) {$k_1^3$};
\node [scale=\a] at (0,-0.7*\a) {$D_3$};
\node [scale=\a] at (2.1*\a,-0.7*\a) {$D_4$};
\node [scale=\a] at (0,1.4*\a) {$D_2$};
\node [scale=\a] at (2.1*\a,1.4*\a) {$D_1$};

\node at  (-1.1*\a,-1.1*\a){};
\node at  (3.2*\a,3.2*\a){};
\node at (1.05*\a,-1.6*\a) {(c) $u^3$};
\end{tikzpicture}
\begin{tikzpicture}
\def\a{.7}

\shade[shading=ball, ball color=blue!30!white, opacity=0.8]    (0,0) circle[radius=\a];
\shade[shading=ball, ball color=blue!30!white, opacity=0.8]    (2.1*\a,0) circle[radius=\a];
\shade[shading=ball, ball color=blue!30!white, opacity=0.8]    (0,2.1*\a) circle[radius=\a];
\shade[shading=ball, ball color=blue!30!white, opacity=0.8]   (2.1*\a,2.1*\a) circle[radius=\a];

\begin{scope}
	\clip (0.5*\a,-0.3*\a)rectangle(1.6*\a,0.3*\a);
	\filldraw[color=red!60](0.01,1*\a)arc(90:-90:1*\a)--(0.01,-1*\a)--(2.086*\a,-1*\a)--(2.086*\a,-1*\a)arc(-90:-270:1*\a)--cycle;
\end{scope}
\begin{scope}
	\clip (0.5*\a,1.8*\a)rectangle(1.6*\a,2.4*\a);
	\filldraw[color=red!60](0.01,3.1*\a)arc(90:-90:1*\a)--(0.01,1.1*\a)--(2.085*\a,1.1*\a)--(2.085*\a,1.1*\a)arc(-90:-270:1*\a)--cycle;
\end{scope}
\begin{scope}
	\clip (-0.3*\a,0.5*\a)rectangle(0.3*\a,1.6*\a);
	\filldraw[color=red!60](1*\a,0.01)arc(0:180:1*\a)--(-1*\a,0.01)--(-1*\a,2.086*\a)--(-1*\a,2.086*\a)arc(-180:0:1*\a)--cycle;
\end{scope}
\begin{scope}
	\clip (1.8*\a,0.5*\a)rectangle(2.4*\a,1.6*\a);
	\filldraw[color=red!60](3.1*\a,0.01)arc(0:180:1*\a)--(1.1*\a,0.01)--(1.1*\a,2.085*\a)--(1.1*\a,2.085*\a)arc(-180:0:1*\a)--cycle;
\end{scope}

\node [scale=1.2*\a] at (0,0) {1};
\node [scale=1.2*\a] at (2.1*\a,0) {-1};
\node [scale=1.2*\a] at (0,2.1*\a) {-1};
\node [scale=1.2*\a] at (2.1*\a,2.1*\a) {1};
\node [scale=\a] at (0,-0.7*\a) {$D_3$};
\node [scale=\a] at (2.1*\a,-0.7*\a) {$D_4$};
\node [scale=\a] at (0,1.4*\a) {$D_2$};
\node [scale=\a] at (2.1*\a,1.4*\a) {$D_1$};

\node at  (-1.1*\a,-1.1*\a){};
\node at  (3.2*\a,3.2*\a){};
\node at (1.05*\a,-1.6*\a) {(d) $u^4$};
\end{tikzpicture}
\caption{Resonant modes of ring arrangement for $N=4$.}\label{f_r4}
\end{figure}
\subsubsection{Matrix arrangement} When $N=4$, the matrix arrangement is the same as the ring arrangement, with only the order of the resonator numbers being different.
\subsection{For $N=6$}
\subsubsection{Chain arrangement}
By Theorem \ref{Th_chain}, we deduce that
$a_6^2=2-\sqrt{3}$, $a_6^3=1$, $a_6^4=2$, $a_6^5=3$, $a_6^6=2+\sqrt{3}$,
which leads to the resonant frequencies
\begin{align*}
&\omega_1=\sqrt{\frac{3v_b^2M}{4\pi R^3}\delta}\big(1+o(1)\big),&&\omega_2=\sqrt{2-\sqrt{3}}\eta+O(\delta^{1-\beta/2}),&&\omega_3=\eta+O(\delta^{1-\beta/2}),\\
&\omega_4=\sqrt{2}\eta+O(\delta^{1-\beta/2}),&&\omega_5=\sqrt{3}\eta+O(\delta^{1-\beta/2}),&&\omega_6=\sqrt{2+\sqrt{3}}\eta+O(\delta^{1-\beta/2}).
\end{align*}
By Theorem \ref{Th_mode}, with an error of order $O(\delta^{\gamma})$, the resonant mode $u^1$ associated with the resonant frequency $\omega_1$ satisfies $u_l^1=1$. Similarly, with an error of order $O(\delta^{\gamma})$, the resonant modes $u^i$, associated with the resonant frequencies $\omega_i$, $2\le i\le 6$, satisfy the following boundary conditions:
\begin{align*}
&u^2_1=1,&&u^2_2=-1+\sqrt{3},&&u^2_3=2-\sqrt{3},&&u^2_4=-2+\sqrt{3},&&u^2_5=1-\sqrt{3},&&u^2_6=-1;\\
&u^3_1=1,&&u^3_2=0,&&u^3_3=-1,&&u^3_4=-1,&&u^3_5=0,&&u^3_6=1;\\
&u^4_1=1,&&u^4_2=-1,&&u^4_3=-1,&&u^4_4=1,&&u^4_5=1,&&u^4_6=-1;\\
&u^5_1=1,&&u^5_2=-2,&&u^5_3=1,&&u^5_4=1,&&u^5_5=-2,&&u^5_6=1;\\
&u^6_1=1,&&u^6_2=-1-\sqrt{3},&&u^6_3=2+\sqrt{3},&&u^6_4=-2-\sqrt{3},&&u^6_5=1+\sqrt{3},&&u^6_6=-1.
\end{align*}
Therefore, we have that $\nabla u^3$ and $\nabla u^5$  exhibit a lower blow--up rate between $D_3$ and $D_4$, while $\nabla u^4$  exhibits a lower blow--up rate between $D_2$ and $D_3$, as well as $D_4$ and $D_5$, see Figure \ref{f_c6}.

\begin{figure}[h]
\begin{tikzpicture}
\def\a{0.57}

\foreach \x in {1,2,3,4,5,6}
{
\shade[shading=ball, ball color=blue!30!white, opacity=0.8]   ({2.1*(\x-1)*\a},0) circle[radius=\a];
}
\foreach \x in {1,2,3,4,5}
{
	\begin{scope}
		\clip ({2.1*(\x-1)*\a},-0.3*\a)rectangle({2.1*\x*\a},0.3*\a);
		\filldraw[color=orange!40]({(2.1*(\x-1)+0.017)*\a},1*\a)arc(90:-90:1*\a)--({(2.1*(\x-1)+0.017)*\a},-1*\a)--({(2.1*\x-0.17)*\a},-1*\a)--({(2.1*\x-0.017)*\a},-1*\a)arc(-90:-270:1*\a)--cycle;
	\end{scope}
}
\foreach \x in {1,2,3,4,5,6}
{
	\node [scale=\a] at ({2.1*(\x-1)*\a},-0.7*\a) {$D_\x$};
}

\node [scale=1.2*\a] at (0,0) {1};
\node [scale=1.2*\a] at (2.1*\a,0) {1};
\node [scale=1.2*\a] at (4.2*\a,0) {1};
\node [scale=1.2*\a] at (6.3*\a,0) {1};
\node [scale=1.2*\a] at (8.4*\a,0) {1};
\node [scale=1.2*\a] at (10.5*\a,0) {1};

\node  at  (-1.1*\a,-1.2*\a){};
\node  at  (11.6*\a,1.2*\a){};

\node  at (5.25*\a,-1.5*\a) {(a) $u^1$};
\end{tikzpicture}
\begin{tikzpicture}
\def\a{0.57}

\foreach \x in {1,2,3,4,5,6}
{
	\shade[shading=ball, ball color=blue!30!white, opacity=0.8]   ({2.1*(\x-1)*\a},0) circle[radius=\a];
}
\foreach \x in {1,2,3,4,5}
{
	\begin{scope}
		\clip ({2.1*(\x-1)*\a},-0.3*\a)rectangle({2.1*\x*\a},0.3*\a);
		\filldraw[color=red!60]({(2.1*(\x-1)+0.017)*\a},1*\a)arc(90:-90:1*\a)--({(2.1*(\x-1)+0.017)*\a},-1*\a)--({(2.1*\x-0.17)*\a},-1*\a)--({(2.1*\x-0.017)*\a},-1*\a)arc(-90:-270:1*\a)--cycle;
	\end{scope}
}

\foreach \x in {1,2,3,4,5,6}
{
	\node [scale=\a] at ({2.1*(\x-1)*\a},-0.7*\a) {$D_\x$};
}

\node [scale=1.2*\a] at (0,0) {1};
\node [scale=1.2*\a] at (2.1*\a,0) {$-1+\sqrt{3}$};
\node [scale=1.2*\a] at (4.2*\a,0) {$2-\sqrt{3}$};
\node [scale=1.2*\a] at (6.3*\a,0) {$-2+\sqrt{3}$};
\node [scale=1.2*\a] at (8.4*\a,0) {$1-\sqrt{3}$};
\node [scale=1.2*\a] at (10.5*\a,0) {$-1$};

\node  at  (-1.1*\a,-1.2*\a){};
\node  at  (11.6*\a,1.2*\a){};

\node  at (5.25*\a,-1.5*\a) {(b) $u^2$};
\end{tikzpicture}
\begin{tikzpicture}
\def\a{0.57}

\foreach \x in {1,2,3,4,5,6}
{
\shade[shading=ball, ball color=blue!30!white, opacity=0.8]   ({2.1*(\x-1)*\a},0) circle[radius=\a];
}
\foreach \x in {1,2,4,5}
{
	\begin{scope}
		\clip ({2.1*(\x-1)*\a},-0.3*\a)rectangle({2.1*\x*\a},0.3*\a);
		\filldraw[color=red!60]({(2.1*(\x-1)+0.017)*\a},1*\a)arc(90:-90:1*\a)--({(2.1*(\x-1)+0.017)*\a},-1*\a)--({(2.1*\x-0.17)*\a},-1*\a)--({(2.1*\x-0.017)*\a},-1*\a)arc(-90:-270:1*\a)--cycle;
	\end{scope}
}
\foreach \x in {3}
{
	\begin{scope}
		\clip ({2.1*(\x-1)*\a},-0.3*\a)rectangle({2.1*\x*\a},0.3*\a);
		\filldraw[color=orange!40]({(2.1*(\x-1)+0.017)*\a},1*\a)arc(90:-90:1*\a)--({(2.1*(\x-1)+0.017)*\a},-1*\a)--({(2.1*\x-0.17)*\a},-1*\a)--({(2.1*\x-0.017)*\a},-1*\a)arc(-90:-270:1*\a)--cycle;
	\end{scope}
}
\foreach \x in {1,2,3,4,5,6}
{
	\node [scale=\a] at ({2.1*(\x-1)*\a},-0.7*\a) {$D_\x$};
}

\node [scale=1.2*\a] at (0,0) {1};
\node [scale=1.2*\a] at (2.1*\a,0) {0};
\node [scale=1.2*\a] at (4.2*\a,0) {$-1$};
\node [scale=1.2*\a] at (6.3*\a,0) {$-1$};
\node [scale=1.2*\a] at (8.4*\a,0) {0};
\node [scale=1.2*\a] at (10.5*\a,0) {1};

\node  at  (-1.1*\a,-1.2*\a){};
\node  at  (11.6*\a,1.2*\a){};

\node  at (5.25*\a,-1.5*\a) {(c) $u^3$};
\end{tikzpicture}
\begin{tikzpicture}
\def\a{0.57}

\foreach \x in {1,2,3,4,5,6}
{
\shade[shading=ball, ball color=blue!30!white, opacity=0.8]   ({2.1*(\x-1)*\a},0) circle[radius=\a];
}

\foreach \x in {1,3,5}
{
	\begin{scope}
		\clip ({2.1*(\x-1)*\a},-0.3*\a)rectangle({2.1*\x*\a},0.3*\a);
		\filldraw[color=red!60]({(2.1*(\x-1)+0.017)*\a},1*\a)arc(90:-90:1*\a)--({(2.1*(\x-1)+0.017)*\a},-1*\a)--({(2.1*\x-0.17)*\a},-1*\a)--({(2.1*\x-0.017)*\a},-1*\a)arc(-90:-270:1*\a)--cycle;
	\end{scope}
}
\foreach \x in {2,4}
{
	\begin{scope}
		\clip ({2.1*(\x-1)*\a},-0.3*\a)rectangle({2.1*\x*\a},0.3*\a);
		\filldraw[color=orange!40]({(2.1*(\x-1)+0.017)*\a},1*\a)arc(90:-90:1*\a)--({(2.1*(\x-1)+0.017)*\a},-1*\a)--({(2.1*\x-0.17)*\a},-1*\a)--({(2.1*\x-0.017)*\a},-1*\a)arc(-90:-270:1*\a)--cycle;
	\end{scope}
}

\foreach \x in {1,2,3,4,5,6}
{
	\node [scale=\a] at ({2.1*(\x-1)*\a},-0.7*\a) {$D_\x$};
}

\node [scale=1.2*\a] at (0,0) {1};
\node [scale=1.2*\a] at (2.1*\a,0) {$-1$};
\node [scale=1.2*\a] at (4.2*\a,0) {$-1$};
\node [scale=1.2*\a] at (6.3*\a,0) {1};
\node [scale=1.2*\a] at (8.4*\a,0) {1};
\node [scale=1.2*\a] at (10.5*\a,0) {$-1$};

\node  at  (-1.1*\a,-1.2*\a){};
\node  at  (11.6*\a,1.2*\a){};

\node  at (5.25*\a,-1.5*\a) {(d) $u^4$};
\end{tikzpicture}
\begin{tikzpicture}
\def\a{0.57}

\foreach \x in {1,2,3,4,5,6}
{
\shade[shading=ball, ball color=blue!30!white, opacity=0.8]   ({2.1*(\x-1)*\a},0) circle[radius=\a];
}

\foreach \x in {1,2,4,5}
{
	\begin{scope}
		\clip ({2.1*(\x-1)*\a},-0.3*\a)rectangle({2.1*\x*\a},0.3*\a);
		\filldraw[color=red!60]({(2.1*(\x-1)+0.017)*\a},1*\a)arc(90:-90:1*\a)--({(2.1*(\x-1)+0.017)*\a},-1*\a)--({(2.1*\x-0.17)*\a},-1*\a)--({(2.1*\x-0.017)*\a},-1*\a)arc(-90:-270:1*\a)--cycle;
	\end{scope}
}
\foreach \x in {3}
{
	\begin{scope}
		\clip ({2.1*(\x-1)*\a},-0.3*\a)rectangle({2.1*\x*\a},0.3*\a);
		\filldraw[color=orange!40]({(2.1*(\x-1)+0.017)*\a},1*\a)arc(90:-90:1*\a)--({(2.1*(\x-1)+0.017)*\a},-1*\a)--({(2.1*\x-0.17)*\a},-1*\a)--({(2.1*\x-0.017)*\a},-1*\a)arc(-90:-270:1*\a)--cycle;
	\end{scope}
}

\foreach \x in {1,2,3,4,5,6}
{
	\node [scale=\a] at ({2.1*(\x-1)*\a},-0.7*\a) {$D_\x$};
}

\node [scale=1.2*\a] at (0,0) {1};
\node [scale=1.2*\a] at (2.1*\a,0) {$-2$};
\node [scale=1.2*\a] at (4.2*\a,0) {1};
\node [scale=1.2*\a] at (6.3*\a,0) {1};
\node [scale=1.2*\a] at (8.4*\a,0) {$-2$};
\node [scale=1.2*\a] at (10.5*\a,0) {1};

\node  at  (-1.1*\a,-1.2*\a){};
\node  at  (11.6*\a,1.2*\a){};

\node  at (5.25*\a,-1.5*\a) {(e) $u^5$};
\end{tikzpicture}
\begin{tikzpicture}
\def\a{0.57}

\foreach \x in {1,2,3,4,5,6}
{
\shade[shading=ball, ball color=blue!30!white, opacity=0.8]   ({2.1*(\x-1)*\a},0) circle[radius=\a];
}

\foreach \x in {1,2,3,4,5}
{
	\begin{scope}
		\clip ({2.1*(\x-1)*\a},-0.3*\a)rectangle({2.1*\x*\a},0.3*\a);
		\filldraw[color=red!60]({(2.1*(\x-1)+0.017)*\a},1*\a)arc(90:-90:1*\a)--({(2.1*(\x-1)+0.017)*\a},-1*\a)--({(2.1*\x-0.17)*\a},-1*\a)--({(2.1*\x-0.017)*\a},-1*\a)arc(-90:-270:1*\a)--cycle;
	\end{scope}
}

\foreach \x in {1,2,3,4,5,6}
{
	\node [scale=\a] at ({2.1*(\x-1)*\a},-0.7*\a) {$D_\x$};
}
\node [scale=1.2*\a] at (0,0) {1};
\node [scale=1.2*\a] at (2.1*\a,0) {$-1-\sqrt{3}$};
\node [scale=1.2*\a] at (4.2*\a,0) {$2+\sqrt{3}$};
\node [scale=1.2*\a] at (6.3*\a,0) {$-2-\sqrt{3}$};
\node [scale=1.2*\a] at (8.4*\a,0) {$1+\sqrt{3}$};
\node [scale=1.2*\a] at (10.5*\a,0) {$-1$};

\node  at  (-1.1*\a,-1.2*\a){};
\node  at  (11.6*\a,1.2*\a){};

\node  at (5.25*\a,-1.5*\a) {(f) $u^6$};
\end{tikzpicture}
\caption{Resonant modes of chain arrangement for $N=6$.}\label{f_c6}
\end{figure}
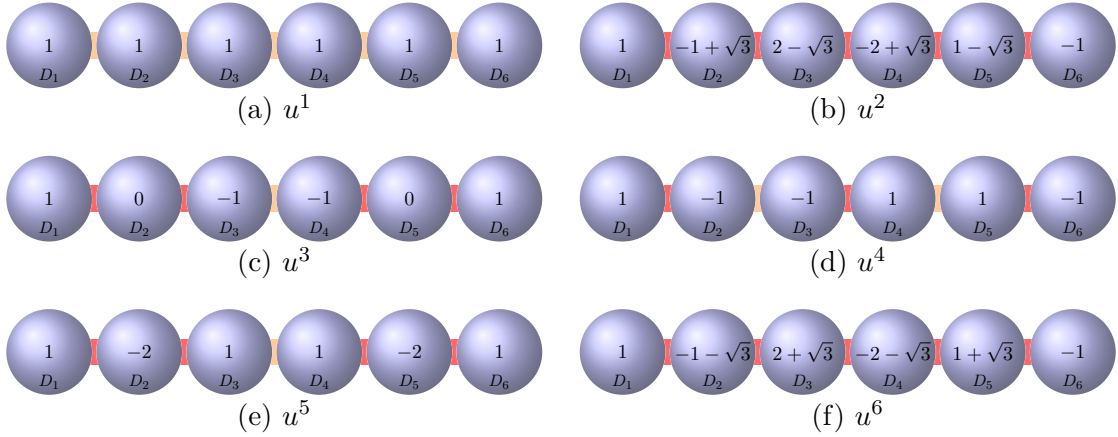

\subsubsection{Ring arrangement} 
When $N=6$, this ring arrangement is also known as a honeycomb arrangement \cite{ahy}. By Theorem \ref{Th_ring}, we have 
$\hat{a}_6^2=1,~\hat{a}_6^3=3,~\hat{a}_6^4=4$.
Hence the resonant frequencies are $\omega_1=\sqrt{\frac{3v_b^2M}{4\pi R^3}\delta}\big(1+o(1)\big)$ and
$$\omega_2, \omega_3=\eta+O(\delta^{1-\beta/2}),
~~\omega_4,\omega_5=\sqrt{3}\eta+O(\delta^{1-\beta/2}),~~\omega_6=2\eta+O(\delta^{1-\beta/2}).$$
By Theorem \ref{Th_mode_r}, up to an error of order $O(\delta^{\gamma})$, the resonant mode $u^1$ associated with the resonant frequency $\omega_1$ satisfies $u_l^1=1$. Similarly, up to an error of order $O(\delta^{\gamma})$, the resonant modes $u^i$, which correspond to the resonant frequency $\omega_i$ for $2\le i\le 6$, satisfy the following boundary conditions:
\begin{align*}
&u^2_1=k_1^2+k_2^2,&&\!\!\!u^2_2=k_2^2,&&\!\!\!u^2_3=-k_1^2,&&\!\!\!u^2_4=-k_1^2-k_2^2,&&\!\!\!u^2_5=-k_2^2,&&\!\!\!u^2_6=k_1^2;\\
&u^3_1=k_1^3+k_2^3,&&\!\!\!u^3_2=k_2^3,&&\!\!\!u^3_3=-k_1^3,&&\!\!\!u^3_4=-k_1^3-k_2^3,&&\!\!\!u^3_5=-k_2^3,&&\!\!\!u^3_6=k_1^3;\\
&u^4_1=k_1^4-k_2^4,&&\!\!\!u^4_2=k_2^4,&&\!\!\!u^4_3=-k_1^4,&&\!\!\!u^4_4=k_1^4-k_2^4,&&\!\!\!u^4_5=k_2^4,&&\!\!\!u^4_6=-k_1^4;\\
&u^5_1=k_1^5-k_2^5,&&\!\!\!u^5_2=k_2^5,&&\!\!\!u^5_3=-k_1^5,&&\!\!\!u^5_4=k_1^5-k_2^5,&&\!\!\!u^5_5=k_2^5,&&\!\!\!u^5_6=-k_1^5;\\
&u^6_1=1,&&\!\!\!u^6_2=-1,&&\!\!\!u^6_3=1,&&\!\!\!u^6_4=-1,&&\!\!\!u^6_5=1,&&\!\!\!u^6_6=-1,
\end{align*}
where $k_t^s$ are some constants for $t=1,2$ and $s=2,\dots,5$. See Figure \ref{f_r6}.
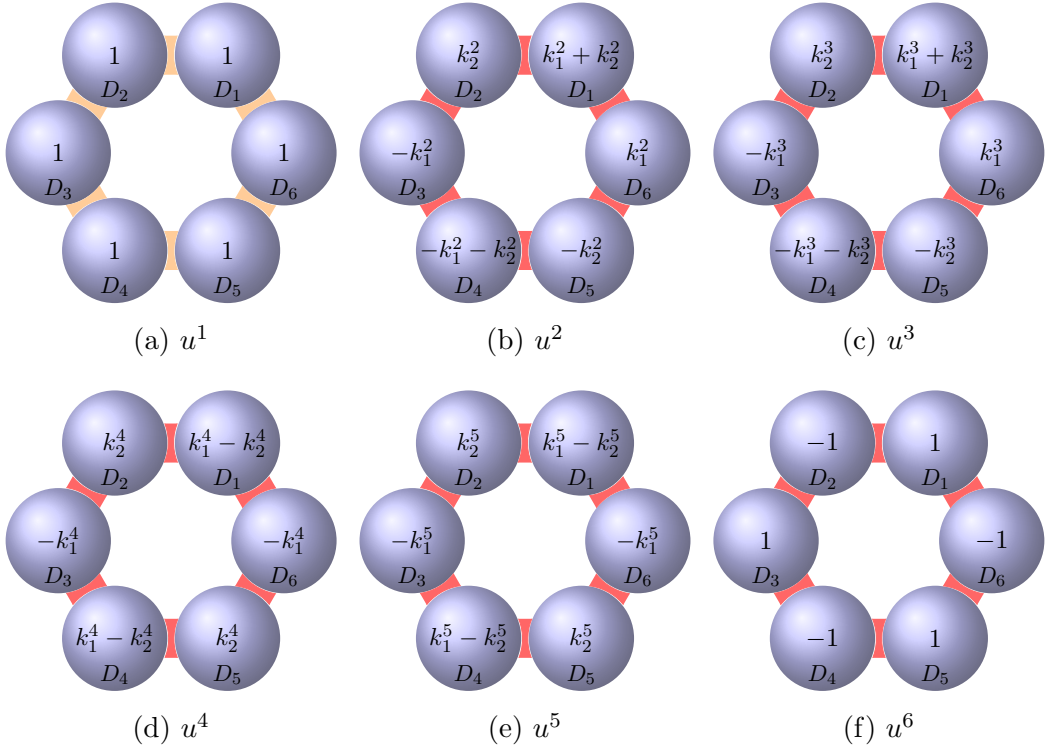
\begin{figure}[h]
\begin{tikzpicture}
\def\a{1}
\def\b{0.9}

\foreach \x in {1,2,3,4,5,6}
{
\shade[shading=ball, ball color=blue!30!white, opacity=0.8]    ({1.5*\a*cos (60*\x)},{1.5*\a*sin (60*\x)}) circle[radius=0.7*\a];
\node [scale=0.8*\a] at ({1.5*\a*cos (60*\x)},{1.5*\a*sin (60*\x)-0.48*\a}) {$D_{\x}$};
}

\foreach \x in {1,...,6} 
{
	\begin{scope}
		\clip({1.2*\a*cos (60*\x)},{1.2*\a*sin (60*\x)})--({1.8*\a*cos (60*\x)},{1.8*\a*sin (60*\x)})--({1.8*\a*cos (60*(1+\x))}, {1.8*\a*sin(60*(1+\x))})--({1.2*\a*cos (60*(1+\x))}, {1.2*\a*sin(60*(1+\x))})--cycle;
		\filldraw[color=orange!40]({0.8*\a*cos (60*\x)-0.02*\a*sin(60*\x)},{0.8*\a*sin (60*\x)+0.01*\a*cos(60*\x)})arc(-180+60*\x:-360+60*\x:0.7*\a)--({2.3*\a*cos (60*\x)-0.02*\a*sin(60*\x)},{2.2*\a*sin (60*\x)+0.01*\a*cos(60*\x)})--({2.2*\a*cos (60*(1+\x))+0.015*\a*sin(60*(1+\x))}, {2.2*\a*sin(60*(1+\x))-0.015*\a*cos(60*(1+\x))})--({2.2*\a*cos (60*(1+\x))+0.015*\a*sin(60*(1+\x))}, {2.2*\a*sin(60*(1+\x))-0.015*\a*cos(60*(1+\x))})arc(60*(1+\x):-180+60*(1+\x):0.7*\a)--cycle;
	\end{scope}
}

\node [scale=0.9*\a] at ({1.5*\a*cos (60*(1))},{1.5*\a*sin (60*(1))}) {1};
\node [scale=0.9*\a] at ({1.5*\a*cos (60*(2))},{1.5*\a*sin (60*(2))}) {1};
\node [scale=0.9*\a] at ({1.5*\a*cos (60*(3))},{1.5*\a*sin (60*(3))}) {1};
\node [scale=0.9*\a] at ({1.5*\a*cos (60*(4))},{1.5*\a*sin (60*(4))}) {1};
\node [scale=0.9*\a] at ({1.5*\a*cos (60*(5))},{1.5*\a*sin (60*(5))}) {1};
\node [scale=0.9*\a] at ({1.5*\a*cos (60*(6))},{1.5*\a*sin (60*(6))}) {1};

\node at (2.4*\b,2.4*\b){};
\node at (-2.4*\b,-2.4*\b){};

\node at (0,-2.5*\a) {(a) $u^1$};
\end{tikzpicture}
\begin{tikzpicture}
\def\a{1}
\def\b{0.9}
\foreach \x in {1,2,3,4,5,6}
{
	\shade[shading=ball, ball color=blue!30!white, opacity=0.8]    ({1.5*\a*cos (60*\x)},{1.5*\a*sin (60*\x)}) circle[radius=0.7*\a];
	\node [scale=0.8*\a] at ({1.5*\a*cos (60*\x)},{1.5*\a*sin (60*\x)-0.48*\a}) {$D_{\x}$};
}

\foreach \x in {1,...,6} 
{
	\begin{scope}
		\clip({1.2*\a*cos (60*\x)},{1.2*\a*sin (60*\x)})--({1.8*\a*cos (60*\x)},{1.8*\a*sin (60*\x)})--({1.8*\a*cos (60*(1+\x))}, {1.8*\a*sin(60*(1+\x))})--({1.2*\a*cos (60*(1+\x))}, {1.2*\a*sin(60*(1+\x))})--cycle;
		\filldraw[color=red!60]({0.8*\a*cos (60*\x)-0.02*\a*sin(60*\x)},{0.8*\a*sin (60*\x)+0.01*\a*cos(60*\x)})arc(-180+60*\x:-360+60*\x:0.7*\a)--({2.3*\a*cos (60*\x)-0.02*\a*sin(60*\x)},{2.2*\a*sin (60*\x)+0.01*\a*cos(60*\x)})--({2.2*\a*cos (60*(1+\x))+0.015*\a*sin(60*(1+\x))}, {2.2*\a*sin(60*(1+\x))-0.015*\a*cos(60*(1+\x))})--({2.2*\a*cos (60*(1+\x))+0.015*\a*sin(60*(1+\x))}, {2.2*\a*sin(60*(1+\x))-0.015*\a*cos(60*(1+\x))})arc(60*(1+\x):-180+60*(1+\x):0.7*\a)--cycle;
	\end{scope}
}

\node [scale=0.85*\a] at ({1.5*\a*cos (60*(1))},{1.5*\a*sin (60*(1))}) {$k_1^2+k_2^2$};
\node [scale=0.85*\a] at ({1.5*\a*cos (60*(2))},{1.5*\a*sin (60*(2))}) {$k_2^2$};
\node [scale=0.85*\a] at ({1.5*\a*cos (60*(3))},{1.5*\a*sin (60*(3))}) {$-k_1^2$};
\node [scale=0.85*\a] at ({1.5*\a*cos (60*(4))},{1.5*\a*sin (60*(4))}) {$-k_1^2-k_2^2$};
\node [scale=0.85*\a] at ({1.5*\a*cos (60*(5))},{1.5*\a*sin (60*(5))}) {$-k_2^2$};
\node [scale=0.85*\a] at ({1.5*\a*cos (60*(6))},{1.5*\a*sin (60*(6))}) {$k_1^2$};

\node at (2.4*\b,2.4*\b){};
\node at (-2.4*\b,-2.4*\b){};

\node at (0,-2.5*\a) {(b) $u^2$};
\end{tikzpicture}
\begin{tikzpicture}
\def\a{1}
\def\b{0.9}
\foreach \x in {1,2,3,4,5,6}
{
	\shade[shading=ball, ball color=blue!30!white, opacity=0.8]    ({1.5*\a*cos (60*\x)},{1.5*\a*sin (60*\x)}) circle[radius=0.7*\a];
	\node [scale=0.8*\a] at ({1.5*\a*cos (60*\x)},{1.5*\a*sin (60*\x)-0.48*\a}) {$D_{\x}$};
}

\foreach \x in {1,...,6} 
{
	\begin{scope}
		\clip({1.2*\a*cos (60*\x)},{1.2*\a*sin (60*\x)})--({1.8*\a*cos (60*\x)},{1.8*\a*sin (60*\x)})--({1.8*\a*cos (60*(1+\x))}, {1.8*\a*sin(60*(1+\x))})--({1.2*\a*cos (60*(1+\x))}, {1.2*\a*sin(60*(1+\x))})--cycle;
		\filldraw[color=red!60]({0.8*\a*cos (60*\x)-0.02*\a*sin(60*\x)},{0.8*\a*sin (60*\x)+0.01*\a*cos(60*\x)})arc(-180+60*\x:-360+60*\x:0.7*\a)--({2.3*\a*cos (60*\x)-0.02*\a*sin(60*\x)},{2.2*\a*sin (60*\x)+0.01*\a*cos(60*\x)})--({2.2*\a*cos (60*(1+\x))+0.015*\a*sin(60*(1+\x))}, {2.2*\a*sin(60*(1+\x))-0.015*\a*cos(60*(1+\x))})--({2.2*\a*cos (60*(1+\x))+0.015*\a*sin(60*(1+\x))}, {2.2*\a*sin(60*(1+\x))-0.015*\a*cos(60*(1+\x))})arc(60*(1+\x):-180+60*(1+\x):0.7*\a)--cycle;
	\end{scope}
}
\node [scale=0.85*\a] at ({1.5*\a*cos (60*(1))},{1.5*\a*sin (60*(1))}) {$k_1^3+k_2^3$};
\node [scale=0.85*\a] at ({1.5*\a*cos (60*(2))},{1.5*\a*sin (60*(2))}) {$k_2^3$};
\node [scale=0.85*\a] at ({1.5*\a*cos (60*(3))},{1.5*\a*sin (60*(3))}) {$-k_1^3$};
\node [scale=0.85*\a] at ({1.5*\a*cos (60*(4))},{1.5*\a*sin (60*(4))}) {$-k_1^3-k_2^3$};
\node [scale=0.85*\a] at ({1.5*\a*cos (60*(5))},{1.5*\a*sin (60*(5))}) {$-k_2^3$};
\node [scale=0.85*\a] at ({1.5*\a*cos (60*(6))},{1.5*\a*sin (60*(6))}) {$k_1^3$};

\node at (2.4*\b,2.4*\b){};
\node at (-2.4*\b,-2.4*\b){};

\node at (0,-2.5*\a) {(c) $u^3$};
\end{tikzpicture}
\begin{tikzpicture}
\def\a{1}
\def\b{0.9}
\foreach \x in {1,2,3,4,5,6}
{
	\shade[shading=ball, ball color=blue!30!white, opacity=0.8]    ({1.5*\a*cos (60*\x)},{1.5*\a*sin (60*\x)}) circle[radius=0.7*\a];
	\node [scale=0.8*\a] at ({1.5*\a*cos (60*\x)},{1.5*\a*sin (60*\x)-0.48*\a}) {$D_{\x}$};
}

\foreach \x in {1,...,6} 
{
	\begin{scope}
		\clip({1.2*\a*cos (60*\x)},{1.2*\a*sin (60*\x)})--({1.8*\a*cos (60*\x)},{1.8*\a*sin (60*\x)})--({1.8*\a*cos (60*(1+\x))}, {1.8*\a*sin(60*(1+\x))})--({1.2*\a*cos (60*(1+\x))}, {1.2*\a*sin(60*(1+\x))})--cycle;
		\filldraw[color=red!60]({0.8*\a*cos (60*\x)-0.02*\a*sin(60*\x)},{0.8*\a*sin (60*\x)+0.01*\a*cos(60*\x)})arc(-180+60*\x:-360+60*\x:0.7*\a)--({2.3*\a*cos (60*\x)-0.02*\a*sin(60*\x)},{2.2*\a*sin (60*\x)+0.01*\a*cos(60*\x)})--({2.2*\a*cos (60*(1+\x))+0.015*\a*sin(60*(1+\x))}, {2.2*\a*sin(60*(1+\x))-0.015*\a*cos(60*(1+\x))})--({2.2*\a*cos (60*(1+\x))+0.015*\a*sin(60*(1+\x))}, {2.2*\a*sin(60*(1+\x))-0.015*\a*cos(60*(1+\x))})arc(60*(1+\x):-180+60*(1+\x):0.7*\a)--cycle;
	\end{scope}
}

\node [scale=0.85*\a] at ({1.5*\a*cos (60*(1))},{1.5*\a*sin (60*(1))}) {$k_1^4-k_2^4$};
\node [scale=0.85*\a] at ({1.5*\a*cos (60*(2))},{1.5*\a*sin (60*(2))}) {$k_2^4$};
\node [scale=0.85*\a] at ({1.5*\a*cos (60*(3))},{1.5*\a*sin (60*(3))}) {$-k_1^4$};
\node [scale=0.85*\a] at ({1.5*\a*cos (60*(4))},{1.5*\a*sin (60*(4))}) {$k_1^4-k_2^4$};
\node [scale=0.85*\a] at ({1.5*\a*cos (60*(5))},{1.5*\a*sin (60*(5))}) {$k_2^4$};
\node [scale=0.85*\a] at ({1.5*\a*cos (60*(6))},{1.5*\a*sin (60*(6))}) {$-k_1^4$};

\node at (2.4*\b,2.4*\b){};
\node at (-2.4*\b,-2.4*\b){};

\node at (0,-2.5*\a) {(d) $u^4$};
\end{tikzpicture}
\begin{tikzpicture}
\def\a{1}
\def\b{0.9}
\foreach \x in {1,2,3,4,5,6}
{
	\shade[shading=ball, ball color=blue!30!white, opacity=0.8]    ({1.5*\a*cos (60*\x)},{1.5*\a*sin (60*\x)}) circle[radius=0.7*\a];
	\node [scale=0.8*\a] at ({1.5*\a*cos (60*\x)},{1.5*\a*sin (60*\x)-0.48*\a}) {$D_{\x}$};
}

\foreach \x in {1,...,6} 
{
	\begin{scope}
		\clip({1.2*\a*cos (60*\x)},{1.2*\a*sin (60*\x)})--({1.8*\a*cos (60*\x)},{1.8*\a*sin (60*\x)})--({1.8*\a*cos (60*(1+\x))}, {1.8*\a*sin(60*(1+\x))})--({1.2*\a*cos (60*(1+\x))}, {1.2*\a*sin(60*(1+\x))})--cycle;
		\filldraw[color=red!60]({0.8*\a*cos (60*\x)-0.02*\a*sin(60*\x)},{0.8*\a*sin (60*\x)+0.01*\a*cos(60*\x)})arc(-180+60*\x:-360+60*\x:0.7*\a)--({2.3*\a*cos (60*\x)-0.02*\a*sin(60*\x)},{2.2*\a*sin (60*\x)+0.01*\a*cos(60*\x)})--({2.2*\a*cos (60*(1+\x))+0.015*\a*sin(60*(1+\x))}, {2.2*\a*sin(60*(1+\x))-0.015*\a*cos(60*(1+\x))})--({2.2*\a*cos (60*(1+\x))+0.015*\a*sin(60*(1+\x))}, {2.2*\a*sin(60*(1+\x))-0.015*\a*cos(60*(1+\x))})arc(60*(1+\x):-180+60*(1+\x):0.7*\a)--cycle;
	\end{scope}
}
\node [scale=0.85*\a] at ({1.5*\a*cos (60*(1))},{1.5*\a*sin (60*(1))}) {$k_1^5-k_2^5$};
\node [scale=0.85*\a] at ({1.5*\a*cos (60*(2))},{1.5*\a*sin (60*(2))}) {$k_2^5$};
\node [scale=0.85*\a] at ({1.5*\a*cos (60*(3))},{1.5*\a*sin (60*(3))}) {$-k_1^5$};
\node [scale=0.85*\a] at ({1.5*\a*cos (60*(4))},{1.5*\a*sin (60*(4))}) {$k_1^5-k_2^5$};
\node [scale=0.85*\a] at ({1.5*\a*cos (60*(5))},{1.5*\a*sin (60*(5))}) {$k_2^5$};
\node [scale=0.85*\a] at ({1.5*\a*cos (60*(6))},{1.5*\a*sin (60*(6))}) {$-k_1^5$};

\node at (2.4*\b,2.4*\b){};
\node at (-2.4*\b,-2.4*\b){};

\node at (0,-2.5*\a) {(e) $u^5$};
\end{tikzpicture}
\begin{tikzpicture}
\def\a{1}
\def\b{0.9}
\foreach \x in {1,2,3,4,5,6}
{
	\shade[shading=ball, ball color=blue!30!white, opacity=0.8]    ({1.5*\a*cos (60*\x)},{1.5*\a*sin (60*\x)}) circle[radius=0.7*\a];
	\node [scale=0.8*\a] at ({1.5*\a*cos (60*\x)},{1.5*\a*sin (60*\x)-0.48*\a}) {$D_{\x}$};
}

\foreach \x in {1,...,6} 
{
	\begin{scope}
		\clip({1.2*\a*cos (60*\x)},{1.2*\a*sin (60*\x)})--({1.8*\a*cos (60*\x)},{1.8*\a*sin (60*\x)})--({1.8*\a*cos (60*(1+\x))}, {1.8*\a*sin(60*(1+\x))})--({1.2*\a*cos (60*(1+\x))}, {1.2*\a*sin(60*(1+\x))})--cycle;
		\filldraw[color=red!60]({0.8*\a*cos (60*\x)-0.02*\a*sin(60*\x)},{0.8*\a*sin (60*\x)+0.01*\a*cos(60*\x)})arc(-180+60*\x:-360+60*\x:0.7*\a)--({2.3*\a*cos (60*\x)-0.02*\a*sin(60*\x)},{2.2*\a*sin (60*\x)+0.01*\a*cos(60*\x)})--({2.2*\a*cos (60*(1+\x))+0.015*\a*sin(60*(1+\x))}, {2.2*\a*sin(60*(1+\x))-0.015*\a*cos(60*(1+\x))})--({2.2*\a*cos (60*(1+\x))+0.015*\a*sin(60*(1+\x))}, {2.2*\a*sin(60*(1+\x))-0.015*\a*cos(60*(1+\x))})arc(60*(1+\x):-180+60*(1+\x):0.7*\a)--cycle;
	\end{scope}
}
\node [scale=0.9*\a] at ({1.5*\a*cos (60*(1))},{1.5*\a*sin (60*(1))}) {1};
\node [scale=0.9*\a] at ({1.5*\a*cos (60*(2))},{1.5*\a*sin (60*(2))}) {$-1$};
\node [scale=0.9*\a] at ({1.5*\a*cos (60*(3))},{1.5*\a*sin (60*(3))}) {1};
\node [scale=0.9*\a] at ({1.5*\a*cos (60*(4))},{1.5*\a*sin (60*(4))}) {$-1$};
\node [scale=0.9*\a] at ({1.5*\a*cos (60*(5))},{1.5*\a*sin (60*(5))}) {1};
\node [scale=0.9*\a] at ({1.5*\a*cos (60*(6))},{1.5*\a*sin (60*(6))}) {$-1$};

\node at (2.4*\b,2.4*\b){};
\node at (-2.4*\b,-2.4*\b){};

\node at (0,-2.5*\a) {(f) $u^6$};
\end{tikzpicture}
\caption{Resonant modes of ring arrangement for $N=6$.}\label{f_r6}
\end{figure}
\subsubsection{Matrix arrangement}
For $N=6$, with $m=2$ and $n=3$. By Theorem \ref{Th_matrix}, we obtain
$\bar{a}_6^2=1,~\bar{a}_6^3=2,~\bar{a}_6^4=\bar{a}_6^5=3,~\bar{a}_6^6=5.$
Therefore, the resonant frequencies are $\omega_1=\sqrt{\frac{3v_b^2M}{4\pi R^3}\delta}\big(1+o(1)\big)$, and 
\begin{align*}
&\omega_2=\eta+O(\delta^{1-\beta/2}),&&\hspace{-15mm}\omega_3=\sqrt{2}\eta+O(\delta^{1-\beta/2}),\\
&\omega_4,\omega_5=\sqrt{3}\eta+O(\delta^{1-\beta/2}),
&&\hspace{-15mm}\omega_6=\sqrt{5}\eta+O(\delta^{1-\beta/2}).
\end{align*}
We compute the resonant modes for the case $m=2$ and $n=3$. By solving equation $(\mathbb{C}-\tilde{\lambda}_i\mathbb{I})\boldsymbol{\alpha}^i=0$ as previously done, with an error of order $O(\delta^\gamma)$, for the resonant mode $u^1$ associated with the resonant frequency $\omega_1$, we have $u_l^1=1$. For the resonant modes $u^i$, $2\le i\le6$, we have
\begin{align*}
&u^2_1=1, &&u^2_2=0, &&u^2_3=-1, &&u^2_4=1, &&u^2_5=0,  &&u^2_6=-1;\\
&u^3_1=1, &&u^3_2=1, &&u^3_3=1, &&u^3_4=-1, &&u^3_5=-1,  &&u^3_6=-1;\\
&u^4_1=k_1^4+k_2^4, &&u^4_2=-k_1^4, &&u^4_3=-k_2^4, &&u^4_4=-k_2^4, &&u^4_5=-k_1^4,  &&u^4_6=k_1^4+k_2^4;\\
&u^5_1=k_1^5+k_2^5, &&u^5_2=-k_1^5, &&u^5_3=-k_2^5, &&u^5_4=-k_2^5, &&u^5_5=-k_1^5,  &&u^5_6=k_1^5+k_2^5;\\
&u^6_1=1, &&u^6_2=-2, &&u^6_3=1, &&u^6_4=-1, &&u^6_5=2,  &&u^6_6=-1,
\end{align*}
where $k_t^s$ are some constants for $t=1,2$ and $s=4,5$. 

From the above analysis, we can observe that $\nabla u^2$ exhibits a lower blow--up rate between the first and second rows. $\nabla u^3$ shows a lower blow--up rate between each pair of columns. Additionally, $\nabla u^4$ and $\nabla u^5$ display a lower blow--up rate within the region bounded by $D_2$ and $D_5$, see Figure \ref{f_m6}.
\begin{figure}[h]
\begin{tikzpicture}
\def\a{.7}

\shade[shading=ball, ball color=blue!30!white, opacity=0.8]   (0,0) circle[radius=\a];
\shade[shading=ball, ball color=blue!30!white, opacity=0.8]   (2.1*\a,0) circle[radius=\a];
\shade[shading=ball, ball color=blue!30!white, opacity=0.8]   (4.2*\a,0) circle[radius=\a];
\shade[shading=ball, ball color=blue!30!white, opacity=0.8]   (0,2.1*\a) circle[radius=\a];
\shade[shading=ball, ball color=blue!30!white, opacity=0.8]   (2.1*\a,2.1*\a) circle[radius=\a];
\shade[shading=ball, ball color=blue!30!white, opacity=0.8]   (4.2*\a,2.1*\a) circle[radius=\a];

\begin{scope}
	\clip (0.5*\a,-0.3*\a)rectangle(1.6*\a,0.3*\a);
	\filldraw[color=orange!40](0.01,1*\a)arc(90:-90:1*\a)--(0.01,-1*\a)--(2.087*\a,-1*\a)--(2.087*\a,-1*\a)arc(-90:-270:1*\a)--cycle;
\end{scope}
\begin{scope}
	\clip (2.5*\a,-0.3*\a)rectangle(3.6*\a,0.3*\a);
	\filldraw[color=orange!40](2.11*\a,1*\a)arc(90:-90:1*\a)--(2.11*\a,-1*\a)--(4.187*\a,-1*\a)--(4.187*\a,-1*\a)arc(-90:-270:1*\a)--cycle;
\end{scope}
\begin{scope}
	\clip (0.5*\a,1.8*\a)rectangle(1.6*\a,2.4*\a);
	\filldraw[color=orange!40](0.01,3.1*\a)arc(90:-90:1*\a)--(0.01,1.1*\a)--(2.087*\a,1.1*\a)--(2.087*\a,1.1*\a)arc(-90:-270:1*\a)--cycle;
\end{scope}
\begin{scope}
	\clip (2.5*\a,1.8*\a)rectangle(3.6*\a,2.4*\a);
	\filldraw[color=orange!40](2.11*\a,3.1*\a)arc(90:-90:1*\a)--(2.11*\a,1.1*\a)--(4.187*\a,1.1*\a)--(4.187*\a,1.1*\a)arc(-90:-270:1*\a)--cycle;
\end{scope}
\begin{scope}
	\clip (-0.3*\a,0.5*\a)rectangle(0.3*\a,1.6*\a);
	\filldraw[color=orange!40](1*\a,0.01)arc(0:180:1*\a)--(-1*\a,0.01)--(-1*\a,2.087*\a)--(-1*\a,2.087*\a)arc(-180:0:1*\a)--cycle;
\end{scope}
\begin{scope}
	\clip (1.8*\a,0.5*\a)rectangle(2.4*\a,1.6*\a);
	\filldraw[color=orange!40](3.1*\a,0.01)arc(0:180:1*\a)--(1.1*\a,0.01)--(1.1*\a,2.087*\a)--(1.1*\a,2.087*\a)arc(-180:0:1*\a)--cycle;
\end{scope}
\begin{scope}
	\clip (3.9*\a,0.5*\a)rectangle(4.5*\a,1.6*\a);
	\filldraw[color=orange!40](5.2*\a,0.01)arc(0:180:1*\a)--(3.2*\a,0.01)--(3.2*\a,2.087*\a)--(3.2*\a,2.087*\a)arc(-180:0:1*\a)--cycle;
\end{scope}

\node [scale=1.2*\a] at (0,0) {1};
\node [scale=1.2*\a] at (2.1*\a,0) {1};
\node [scale=1.2*\a] at (4.2*\a,0) {1};
\node [scale=1.2*\a] at (0,2.1*\a) {1};
\node [scale=1.2*\a] at (2.1*\a,2.1*\a) {1};
\node [scale=1.2*\a] at (4.2*\a,2.1*\a) {1};

\node [scale=\a] at (0,-0.7*\a) {$D_1$};
\node [scale=\a] at (2.1*\a,-0.7*\a) {$D_2$};
\node [scale=\a] at (4.2*\a,-0.7*\a) {$D_3$};
\node [scale=\a] at (0,1.4*\a) {$D_4$};
\node [scale=\a] at (2.1*\a,1.4*\a) {$D_5$};
\node [scale=\a] at (4.2*\a,1.4*\a) {$D_6$};

\node at (4.4*\a,3.3*\a) {};
\node at (-1.3*\a,-1.2*\a) {};
\node at  (2.1*\a,-1.55*\a) {(a) $u^1$};

\end{tikzpicture}
\begin{tikzpicture}
\def\a{.7}

\shade[shading=ball, ball color=blue!30!white, opacity=0.8]   (0,0) circle[radius=\a];
\shade[shading=ball, ball color=blue!30!white, opacity=0.8]   (2.1*\a,0) circle[radius=\a];
\shade[shading=ball, ball color=blue!30!white, opacity=0.8]   (4.2*\a,0) circle[radius=\a];
\shade[shading=ball, ball color=blue!30!white, opacity=0.8]   (0,2.1*\a) circle[radius=\a];
\shade[shading=ball, ball color=blue!30!white, opacity=0.8]   (2.1*\a,2.1*\a) circle[radius=\a];
\shade[shading=ball, ball color=blue!30!white, opacity=0.8]   (4.2*\a,2.1*\a) circle[radius=\a];

\begin{scope}
	\clip (0.5*\a,-0.3*\a)rectangle(1.6*\a,0.3*\a);
	\filldraw[color=red!60](0.01,1*\a)arc(90:-90:1*\a)--(0.01,-1*\a)--(2.087*\a,-1*\a)--(2.087*\a,-1*\a)arc(-90:-270:1*\a)--cycle;
\end{scope}
\begin{scope}
	\clip (2.5*\a,-0.3*\a)rectangle(3.6*\a,0.3*\a);
	\filldraw[color=red!60](2.11*\a,1*\a)arc(90:-90:1*\a)--(2.11*\a,-1*\a)--(4.187*\a,-1*\a)--(4.187*\a,-1*\a)arc(-90:-270:1*\a)--cycle;
\end{scope}
\begin{scope}
	\clip (0.5*\a,1.8*\a)rectangle(1.6*\a,2.4*\a);
	\filldraw[color=red!60](0.01,3.1*\a)arc(90:-90:1*\a)--(0.01,1.1*\a)--(2.087*\a,1.1*\a)--(2.087*\a,1.1*\a)arc(-90:-270:1*\a)--cycle;
\end{scope}
\begin{scope}
	\clip (2.5*\a,1.8*\a)rectangle(3.6*\a,2.4*\a);
	\filldraw[color=red!60](2.11*\a,3.1*\a)arc(90:-90:1*\a)--(2.11*\a,1.1*\a)--(4.187*\a,1.1*\a)--(4.187*\a,1.1*\a)arc(-90:-270:1*\a)--cycle;
\end{scope}
\begin{scope}
	\clip (-0.3*\a,0.5*\a)rectangle(0.3*\a,1.6*\a);
	\filldraw[color=orange!40](1*\a,0.01)arc(0:180:1*\a)--(-1*\a,0.01)--(-1*\a,2.087*\a)--(-1*\a,2.087*\a)arc(-180:0:1*\a)--cycle;
\end{scope}
\begin{scope}
	\clip (1.8*\a,0.5*\a)rectangle(2.4*\a,1.6*\a);
	\filldraw[color=orange!40](3.1*\a,0.01)arc(0:180:1*\a)--(1.1*\a,0.01)--(1.1*\a,2.087*\a)--(1.1*\a,2.087*\a)arc(-180:0:1*\a)--cycle;
\end{scope}
\begin{scope}
	\clip (3.9*\a,0.5*\a)rectangle(4.5*\a,1.6*\a);
	\filldraw[color=orange!40](5.2*\a,0.01)arc(0:180:1*\a)--(3.2*\a,0.01)--(3.2*\a,2.087*\a)--(3.2*\a,2.087*\a)arc(-180:0:1*\a)--cycle;
\end{scope}

\node [scale=1.2*\a] at (0,0) {1};
\node [scale=1.2*\a] at (2.1*\a,0) {0};
\node [scale=1.2*\a] at (4.2*\a,0) {-1};
\node [scale=1.2*\a] at (0,2.1*\a) {1};
\node [scale=1.2*\a] at (2.1*\a,2.1*\a) {0};
\node [scale=1.2*\a] at (4.2*\a,2.1*\a) {-1};

\node [scale=\a] at (0,-0.7*\a) {$D_1$};
\node [scale=\a] at (2.1*\a,-0.7*\a) {$D_2$};
\node [scale=\a] at (4.2*\a,-0.7*\a) {$D_3$};
\node [scale=\a] at (0,1.4*\a) {$D_4$};
\node [scale=\a] at (2.1*\a,1.4*\a) {$D_5$};
\node [scale=\a] at (4.2*\a,1.4*\a) {$D_6$};

\node at (4.4*\a,3.3*\a) {};
\node at (-1.3*\a,-1.2*\a) {};
\node at  (2.1*\a,-1.55*\a) {(b) $u^2$};

\end{tikzpicture}
\begin{tikzpicture}
\def\a{.7}

\shade[shading=ball, ball color=blue!30!white, opacity=0.8]   (0,0) circle[radius=\a];
\shade[shading=ball, ball color=blue!30!white, opacity=0.8]   (2.1*\a,0) circle[radius=\a];
\shade[shading=ball, ball color=blue!30!white, opacity=0.8]   (4.2*\a,0) circle[radius=\a];
\shade[shading=ball, ball color=blue!30!white, opacity=0.8]   (0,2.1*\a) circle[radius=\a];
\shade[shading=ball, ball color=blue!30!white, opacity=0.8]   (2.1*\a,2.1*\a) circle[radius=\a];
\shade[shading=ball, ball color=blue!30!white, opacity=0.8]   (4.2*\a,2.1*\a) circle[radius=\a];

\begin{scope}
	\clip (0.5*\a,-0.3*\a)rectangle(1.6*\a,0.3*\a);
	\filldraw[color=orange!40](0.01,1*\a)arc(90:-90:1*\a)--(0.01,-1*\a)--(2.087*\a,-1*\a)--(2.087*\a,-1*\a)arc(-90:-270:1*\a)--cycle;
\end{scope}
\begin{scope}
	\clip (2.5*\a,-0.3*\a)rectangle(3.6*\a,0.3*\a);
	\filldraw[color=orange!40](2.11*\a,1*\a)arc(90:-90:1*\a)--(2.11*\a,-1*\a)--(4.187*\a,-1*\a)--(4.187*\a,-1*\a)arc(-90:-270:1*\a)--cycle;
\end{scope}
\begin{scope}
	\clip (0.5*\a,1.8*\a)rectangle(1.6*\a,2.4*\a);
	\filldraw[color=orange!40](0.01,3.1*\a)arc(90:-90:1*\a)--(0.01,1.1*\a)--(2.087*\a,1.1*\a)--(2.087*\a,1.1*\a)arc(-90:-270:1*\a)--cycle;
\end{scope}
\begin{scope}
	\clip (2.5*\a,1.8*\a)rectangle(3.6*\a,2.4*\a);
	\filldraw[color=orange!40](2.11*\a,3.1*\a)arc(90:-90:1*\a)--(2.11*\a,1.1*\a)--(4.187*\a,1.1*\a)--(4.187*\a,1.1*\a)arc(-90:-270:1*\a)--cycle;
\end{scope}
\begin{scope}
	\clip (-0.3*\a,0.5*\a)rectangle(0.3*\a,1.6*\a);
	\filldraw[color=red!60](1*\a,0.01)arc(0:180:1*\a)--(-1*\a,0.01)--(-1*\a,2.087*\a)--(-1*\a,2.087*\a)arc(-180:0:1*\a)--cycle;
\end{scope}
\begin{scope}
	\clip (1.8*\a,0.5*\a)rectangle(2.4*\a,1.6*\a);
	\filldraw[color=red!60](3.1*\a,0.01)arc(0:180:1*\a)--(1.1*\a,0.01)--(1.1*\a,2.087*\a)--(1.1*\a,2.087*\a)arc(-180:0:1*\a)--cycle;
\end{scope}
\begin{scope}
	\clip (3.9*\a,0.5*\a)rectangle(4.5*\a,1.6*\a);
	\filldraw[color=red!60](5.2*\a,0.01)arc(0:180:1*\a)--(3.2*\a,0.01)--(3.2*\a,2.087*\a)--(3.2*\a,2.087*\a)arc(-180:0:1*\a)--cycle;
\end{scope}

\node [scale=1.2*\a] at (0,0) {1};
\node [scale=1.2*\a] at (2.1*\a,0) {1};
\node [scale=1.2*\a] at (4.2*\a,0) {1};
\node [scale=1.2*\a] at (0,2.1*\a) {-1};
\node [scale=1.2*\a] at (2.1*\a,2.1*\a) {-1};
\node [scale=1.2*\a] at (4.2*\a,2.1*\a) {-1};

\node [scale=\a] at (0,-0.7*\a) {$D_1$};
\node [scale=\a] at (2.1*\a,-0.7*\a) {$D_2$};
\node [scale=\a] at (4.2*\a,-0.7*\a) {$D_3$};
\node [scale=\a] at (0,1.4*\a) {$D_4$};
\node [scale=\a] at (2.1*\a,1.4*\a) {$D_5$};
\node [scale=\a] at (4.2*\a,1.4*\a) {$D_6$};

\node at (4.4*\a,3.3*\a) {};
\node at (-1.3*\a,-1.2*\a) {};
\node at  (2.1*\a,-1.55*\a) {(c) $u^3$};

\end{tikzpicture}
\begin{tikzpicture}
\def\a{.7}

\shade[shading=ball, ball color=blue!30!white, opacity=0.8]   (0,0) circle[radius=\a];
\shade[shading=ball, ball color=blue!30!white, opacity=0.8]   (2.1*\a,0) circle[radius=\a];
\shade[shading=ball, ball color=blue!30!white, opacity=0.8]   (4.2*\a,0) circle[radius=\a];
\shade[shading=ball, ball color=blue!30!white, opacity=0.8]   (0,2.1*\a) circle[radius=\a];
\shade[shading=ball, ball color=blue!30!white, opacity=0.8]   (2.1*\a,2.1*\a) circle[radius=\a];
\shade[shading=ball, ball color=blue!30!white, opacity=0.8]   (4.2*\a,2.1*\a) circle[radius=\a];

\begin{scope}
	\clip (0.5*\a,-0.3*\a)rectangle(1.6*\a,0.3*\a);
	\filldraw[color=red!60](0.01,1*\a)arc(90:-90:1*\a)--(0.01,-1*\a)--(2.087*\a,-1*\a)--(2.087*\a,-1*\a)arc(-90:-270:1*\a)--cycle;
\end{scope}
\begin{scope}
	\clip (2.5*\a,-0.3*\a)rectangle(3.6*\a,0.3*\a);
	\filldraw[color=red!60](2.11*\a,1*\a)arc(90:-90:1*\a)--(2.11*\a,-1*\a)--(4.187*\a,-1*\a)--(4.187*\a,-1*\a)arc(-90:-270:1*\a)--cycle;
\end{scope}
\begin{scope}
	\clip (0.5*\a,1.8*\a)rectangle(1.6*\a,2.4*\a);
	\filldraw[color=red!60](0.01,3.1*\a)arc(90:-90:1*\a)--(0.01,1.1*\a)--(2.087*\a,1.1*\a)--(2.087*\a,1.1*\a)arc(-90:-270:1*\a)--cycle;
\end{scope}
\begin{scope}
	\clip (2.5*\a,1.8*\a)rectangle(3.6*\a,2.4*\a);
	\filldraw[color=red!60](2.11*\a,3.1*\a)arc(90:-90:1*\a)--(2.11*\a,1.1*\a)--(4.187*\a,1.1*\a)--(4.187*\a,1.1*\a)arc(-90:-270:1*\a)--cycle;
\end{scope}
\begin{scope}
	\clip (-0.3*\a,0.5*\a)rectangle(0.3*\a,1.6*\a);
	\filldraw[color=red!60](1*\a,0.01)arc(0:180:1*\a)--(-1*\a,0.01)--(-1*\a,2.087*\a)--(-1*\a,2.087*\a)arc(-180:0:1*\a)--cycle;
\end{scope}
\begin{scope}
	\clip (1.8*\a,0.5*\a)rectangle(2.4*\a,1.6*\a);
	\filldraw[color=orange!40](3.1*\a,0.01)arc(0:180:1*\a)--(1.1*\a,0.01)--(1.1*\a,2.087*\a)--(1.1*\a,2.087*\a)arc(-180:0:1*\a)--cycle;
\end{scope}
\begin{scope}
	\clip (3.9*\a,0.5*\a)rectangle(4.5*\a,1.6*\a);
	\filldraw[color=red!60](5.2*\a,0.01)arc(0:180:1*\a)--(3.2*\a,0.01)--(3.2*\a,2.087*\a)--(3.2*\a,2.087*\a)arc(-180:0:1*\a)--cycle;
\end{scope}

\node [scale=1.2*\a] at (0,0) {$k_1^4+k_2^4$};
\node [scale=1.2*\a] at (2.1*\a,0) {$-k_1^4$};
\node [scale=1.2*\a] at (4.2*\a,0) {$-k_2^4$};
\node [scale=1.2*\a] at (0,2.1*\a) {$-k_2^4$};
\node [scale=1.2*\a] at (2.1*\a,2.1*\a) {$-k_1^4$};
\node [scale=1.2*\a] at (4.2*\a,2.1*\a) {$k_1^4+k_2^4$};

\node [scale=\a] at (0,-0.7*\a) {$D_1$};
\node [scale=\a] at (2.1*\a,-0.7*\a) {$D_2$};
\node [scale=\a] at (4.2*\a,-0.7*\a) {$D_3$};
\node [scale=\a] at (0,1.4*\a) {$D_4$};
\node [scale=\a] at (2.1*\a,1.4*\a) {$D_5$};
\node [scale=\a] at (4.2*\a,1.4*\a) {$D_6$};

\node at (4.4*\a,3.3*\a) {};
\node at (-1.3*\a,-1.2*\a) {};
\node at  (2.1*\a,-1.55*\a) {(d) $u^4$};

\end{tikzpicture}
\begin{tikzpicture}
\def\a{.7}

\shade[shading=ball, ball color=blue!30!white, opacity=0.8]   (0,0) circle[radius=\a];
\shade[shading=ball, ball color=blue!30!white, opacity=0.8]   (2.1*\a,0) circle[radius=\a];
\shade[shading=ball, ball color=blue!30!white, opacity=0.8]   (4.2*\a,0) circle[radius=\a];
\shade[shading=ball, ball color=blue!30!white, opacity=0.8]   (0,2.1*\a) circle[radius=\a];
\shade[shading=ball, ball color=blue!30!white, opacity=0.8]   (2.1*\a,2.1*\a) circle[radius=\a];
\shade[shading=ball, ball color=blue!30!white, opacity=0.8]   (4.2*\a,2.1*\a) circle[radius=\a];

\begin{scope}
	\clip (0.5*\a,-0.3*\a)rectangle(1.6*\a,0.3*\a);
	\filldraw[color=red!60](0.01,1*\a)arc(90:-90:1*\a)--(0.01,-1*\a)--(2.087*\a,-1*\a)--(2.087*\a,-1*\a)arc(-90:-270:1*\a)--cycle;
\end{scope}
\begin{scope}
	\clip (2.5*\a,-0.3*\a)rectangle(3.6*\a,0.3*\a);
	\filldraw[color=red!60](2.11*\a,1*\a)arc(90:-90:1*\a)--(2.11*\a,-1*\a)--(4.187*\a,-1*\a)--(4.187*\a,-1*\a)arc(-90:-270:1*\a)--cycle;
\end{scope}
\begin{scope}
	\clip (0.5*\a,1.8*\a)rectangle(1.6*\a,2.4*\a);
	\filldraw[color=red!60](0.01,3.1*\a)arc(90:-90:1*\a)--(0.01,1.1*\a)--(2.087*\a,1.1*\a)--(2.087*\a,1.1*\a)arc(-90:-270:1*\a)--cycle;
\end{scope}
\begin{scope}
	\clip (2.5*\a,1.8*\a)rectangle(3.6*\a,2.4*\a);
	\filldraw[color=red!60](2.11*\a,3.1*\a)arc(90:-90:1*\a)--(2.11*\a,1.1*\a)--(4.187*\a,1.1*\a)--(4.187*\a,1.1*\a)arc(-90:-270:1*\a)--cycle;
\end{scope}
\begin{scope}
	\clip (-0.3*\a,0.5*\a)rectangle(0.3*\a,1.6*\a);
	\filldraw[color=red!60](1*\a,0.01)arc(0:180:1*\a)--(-1*\a,0.01)--(-1*\a,2.087*\a)--(-1*\a,2.087*\a)arc(-180:0:1*\a)--cycle;
\end{scope}
\begin{scope}
	\clip (1.8*\a,0.5*\a)rectangle(2.4*\a,1.6*\a);
	\filldraw[color=orange!40](3.1*\a,0.01)arc(0:180:1*\a)--(1.1*\a,0.01)--(1.1*\a,2.087*\a)--(1.1*\a,2.087*\a)arc(-180:0:1*\a)--cycle;
\end{scope}
\begin{scope}
	\clip (3.9*\a,0.5*\a)rectangle(4.5*\a,1.6*\a);
	\filldraw[color=red!60](5.2*\a,0.01)arc(0:180:1*\a)--(3.2*\a,0.01)--(3.2*\a,2.087*\a)--(3.2*\a,2.087*\a)arc(-180:0:1*\a)--cycle;
\end{scope}

\node [scale=1.2*\a] at (0,0) {$k_1^5+k_2^5$};
\node [scale=1.2*\a] at (2.1*\a,0) {$-k_1^5$};
\node [scale=1.2*\a] at (4.2*\a,0) {$-k_2^5$};
\node [scale=1.2*\a] at (0,2.1*\a) {$-k_2^5$};
\node [scale=1.2*\a] at (2.1*\a,2.1*\a) {$-k_1^5$};
\node [scale=1.2*\a] at (4.2*\a,2.1*\a) {$k_1^5+k_2^5$};

\node [scale=\a] at (0,-0.7*\a) {$D_1$};
\node [scale=\a] at (2.1*\a,-0.7*\a) {$D_2$};
\node [scale=\a] at (4.2*\a,-0.7*\a) {$D_3$};
\node [scale=\a] at (0,1.4*\a) {$D_4$};
\node [scale=\a] at (2.1*\a,1.4*\a) {$D_5$};
\node [scale=\a] at (4.2*\a,1.4*\a) {$D_6$};

\node at (4.4*\a,3.3*\a) {};
\node at (-1.3*\a,-1.2*\a) {};
\node at  (2.1*\a,-1.55*\a) {(e) $u^5$};

\end{tikzpicture}
\begin{tikzpicture}
\def\a{.7}

\shade[shading=ball, ball color=blue!30!white, opacity=0.8]   (0,0) circle[radius=\a];
\shade[shading=ball, ball color=blue!30!white, opacity=0.8]   (2.1*\a,0) circle[radius=\a];
\shade[shading=ball, ball color=blue!30!white, opacity=0.8]   (4.2*\a,0) circle[radius=\a];
\shade[shading=ball, ball color=blue!30!white, opacity=0.8]   (0,2.1*\a) circle[radius=\a];
\shade[shading=ball, ball color=blue!30!white, opacity=0.8]   (2.1*\a,2.1*\a) circle[radius=\a];
\shade[shading=ball, ball color=blue!30!white, opacity=0.8]   (4.2*\a,2.1*\a) circle[radius=\a];

\begin{scope}
	\clip (0.5*\a,-0.3*\a)rectangle(1.6*\a,0.3*\a);
	\filldraw[color=red!60](0.01,1*\a)arc(90:-90:1*\a)--(0.01,-1*\a)--(2.087*\a,-1*\a)--(2.087*\a,-1*\a)arc(-90:-270:1*\a)--cycle;
\end{scope}
\begin{scope}
	\clip (2.5*\a,-0.3*\a)rectangle(3.6*\a,0.3*\a);
	\filldraw[color=red!60](2.11*\a,1*\a)arc(90:-90:1*\a)--(2.11*\a,-1*\a)--(4.187*\a,-1*\a)--(4.187*\a,-1*\a)arc(-90:-270:1*\a)--cycle;
\end{scope}
\begin{scope}
	\clip (0.5*\a,1.8*\a)rectangle(1.6*\a,2.4*\a);
	\filldraw[color=red!60](0.01,3.1*\a)arc(90:-90:1*\a)--(0.01,1.1*\a)--(2.087*\a,1.1*\a)--(2.087*\a,1.1*\a)arc(-90:-270:1*\a)--cycle;
\end{scope}
\begin{scope}
	\clip (2.5*\a,1.8*\a)rectangle(3.6*\a,2.4*\a);
	\filldraw[color=red!60](2.11*\a,3.1*\a)arc(90:-90:1*\a)--(2.11*\a,1.1*\a)--(4.187*\a,1.1*\a)--(4.187*\a,1.1*\a)arc(-90:-270:1*\a)--cycle;
\end{scope}
\begin{scope}
	\clip (-0.3*\a,0.5*\a)rectangle(0.3*\a,1.6*\a);
	\filldraw[color=red!60](1*\a,0.01)arc(0:180:1*\a)--(-1*\a,0.01)--(-1*\a,2.087*\a)--(-1*\a,2.087*\a)arc(-180:0:1*\a)--cycle;
\end{scope}
\begin{scope}
	\clip (1.8*\a,0.5*\a)rectangle(2.4*\a,1.6*\a);
	\filldraw[color=red!60](3.1*\a,0.01)arc(0:180:1*\a)--(1.1*\a,0.01)--(1.1*\a,2.087*\a)--(1.1*\a,2.087*\a)arc(-180:0:1*\a)--cycle;
\end{scope}
\begin{scope}
	\clip (3.9*\a,0.5*\a)rectangle(4.5*\a,1.6*\a);
	\filldraw[color=red!60](5.2*\a,0.01)arc(0:180:1*\a)--(3.2*\a,0.01)--(3.2*\a,2.087*\a)--(3.2*\a,2.087*\a)arc(-180:0:1*\a)--cycle;
\end{scope}

\node [scale=1.2*\a] at (0,0) {1};
\node [scale=1.2*\a] at (2.1*\a,0) {-2};
\node [scale=1.2*\a] at (4.2*\a,0) {1};
\node [scale=1.2*\a] at (0,2.1*\a) {-1};
\node [scale=1.2*\a] at (2.1*\a,2.1*\a) {2};
\node [scale=1.2*\a] at (4.2*\a,2.1*\a) {-1};

\node [scale=\a] at (0,-0.7*\a) {$D_1$};
\node [scale=\a] at (2.1*\a,-0.7*\a) {$D_2$};
\node [scale=\a] at (4.2*\a,-0.7*\a) {$D_3$};
\node [scale=\a] at (0,1.4*\a) {$D_4$};
\node [scale=\a] at (2.1*\a,1.4*\a) {$D_5$};
\node [scale=\a] at (4.2*\a,1.4*\a) {$D_6$};

\node at (4.4*\a,3.3*\a) {};
\node at (-1.3*\a,-1.2*\a) {};
\node at  (2.1*\a,-1.55*\a) {(f) $u^6$};

\end{tikzpicture}
\caption{Resonant modes of matrix arrangement for $N=6$.}\label{f_m6}
\end{figure}
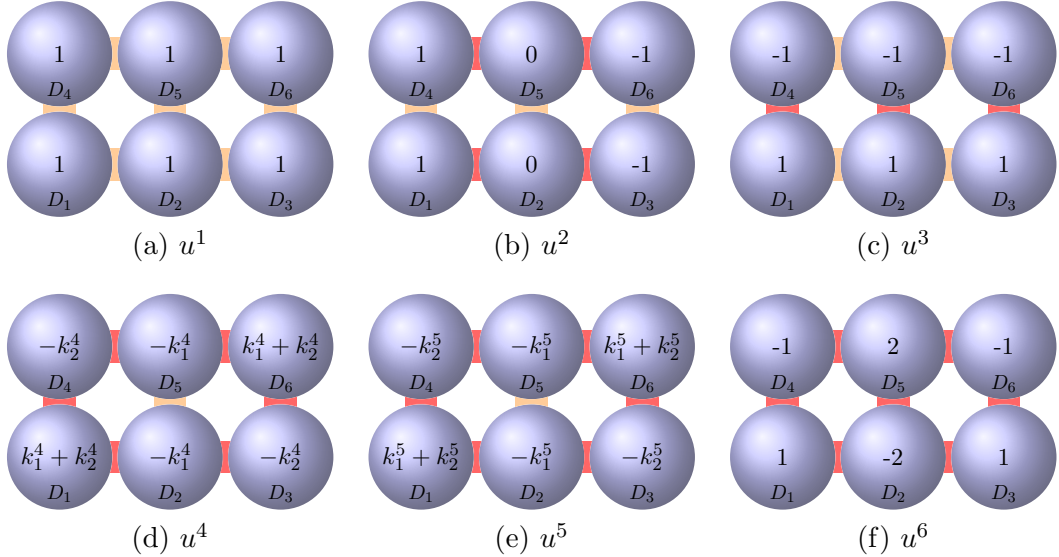

\section{Concluding remarks}\label{sec_con}

In this paper, we investigate how the number and spatial arrangement of resonators in a subwavelength acoustic resonance system influence its resonant frequencies. This is achieved by analyzing the eigenvalues and eigenvectors of the capacitance matrix. We derive precise analytical expressions to quantify these effects and rigorously examine the asymptotic behavior of resonant modes associated with each frequency. Our results demonstrate that resonator arrangement critically governs the system’s response range and frequency distribution. Notably, resonant modes exhibit increasingly diverse asymptotic behaviors as the number of resonators or their spatial configuration varies.

While our study focuses on spherical resonators, the methodology generalizes readily to other geometries, such as 
$m$-convex shapes. Crucially, our analysis reveals that frequency distribution depends not on resonator symmetry or relative positioning, but on the number of neighboring resonators surrounding each unit. Consequently, our conclusions extend to any configuration where the local resonator density (i.e., the count of adjacent resonators) remains consistent. The method’s generalizability enables efficient derivation of resonant frequency distributions and modal behaviors for complex arrangements. Furthermore, this framework applies to elastic wave resonance systems with multiple inclusions—a direction we plan to explore in future work.

These insights advance the design of functional acoustic materials. Key recommendations include: Matrix arrangements to broaden system response ranges; Ring/matrix configurations to maximize frequency gaps; Chain arrangements to isolate distinct response frequencies. By correlating resonant mode asymptotics with arrangement-dependent effects, our work empowers tailored material design. For instance, designers can leverage modal behavior analysis to engineer materials with target spectral or spatial properties.

\end{document}